 \documentclass[final,5p,times,twocolumn]{elsarticle}

\usepackage{amssymb}
\usepackage{amsmath}
\usepackage{amsthm}
\usepackage{ulem} 
\usepackage{algorithm} 
\usepackage{algorithmicx}%
\usepackage{algpseudocode}%
\usepackage{multicol}
\usepackage{booktabs}
\usepackage{tabularx}
\usepackage{color}
\usepackage{wrapfig}
\usepackage{graphicx}
\usepackage{array}
\usepackage{subcaption}
\usepackage[T1]{fontenc} 
\usepackage{multirow}
\newtheorem{theorem}{Theorem}

\newtheorem{lemma}{Lemma}
\newtheorem{remark}{Remark}
\newtheorem{proposition}{Proposition}

\journal{} 

\begin{document}

\begin{frontmatter}



\title{Block coordinate descent for joint delay-energy optimization in multi-hop D2D networks}

\cortext[cor1]{Corresponding author}
\author[label1,label2]{Kai-Xiang Hu\corref{cor1}} 
\affiliation[label1]{organization={School of Mathematical Sciences, Beijing University of Posts and Telecommunications},
	city={Beijing},
	postcode={100876}, 
	country={China}}
\ead{hukx@bupt.edu.cn}
%
%
%
%
\author{Jacek Gondzio\fnref{label2}}
\affiliation[label2]{organization={School of Mathematics and Maxwell Institute for Mathematical Sciences, The University of Edinburgh},
	city={Edinburgh},
	postcode={EH9 3FD}, 
	state={Scotland},
	country={United Kingdom}}
\ead{J.Gondzio@ed.ac.uk}

\author[label1]{Caixia Kou}

\ead{koucx@bupt.edu.cn}
\begin{abstract}
	In multi-hop device-to-device (D2D) networks, the optimization of network-level metrics is particularly difficult due to the tight coupling between network-layer routing and physical-layer resource allocation.
	Departing from traditional average-performance metrics, this paper addresses the joint optimization of routing paths, transmission power, and bandwidth allocation. We formulate a generalized cost function to minimize the maximum transmission time (i.e., the bottleneck delay) alongside the total energy consumption. To tackle the resulting highly non-convex formulation, we propose a novel block coordinate descent (BCD) framework. At the network layer, we develop two adaptive routing algorithms: a matrix-free Frank-Wolfe (MF-FW) algorithm for fast execution in dense topologies, and a low-rank primal-dual interior-point method (LR-PDIPM) that bypasses dense matrix inversions via the Sherman-Morrison formula for high-precision solutions. At the physical layer, we design a parallel dual ascent algorithm leveraging a time-domain perspective transformation to solve the resource allocation subproblem to global optimality. The proposed BCD framework is proven to converge to an $\varepsilon$-neighborhood of a stationary point.
	Through comprehensive experiments, the proposed BCD framework establishes its superiority in achieving the optimal delay-energy trade-off. Specifically, the LR-PDIPM variant achieves a maximum $9.14\times$ reduction in total energy consumption and up to an order of magnitude improvement in energy efficiency, while maintaining a bounded maximum delay gap (up to $3.78\times$) relative to the best baseline.  Meanwhile, the warm-start MF-FW variant identifies near-optimal solutions in mere seconds, serving as a highly practical engineering approach.
\end{abstract}

\begin{keyword}
	Joint routing and resource allocation  \sep Block coordinate descent \sep Frank-Wolfe algorithm  \sep Interior-point method   \sep Parallel dual ascent  \sep  Device-to-device networks
\end{keyword}

\end{frontmatter}
\section{Introduction}
\label{sec:intro}

Device-to-device (D2D) communications have emerged as a cornerstone for future B5G/6G networks, facilitating proximity-based data exchange without traversing the base station \cite{asadi2014survey,de2026joint,ansari20175g,wang2025dacs}. While single-hop D2D offers spectral efficiency gains, it is limited by short transmission ranges and susceptibility to signal blockage. Consequently, {multi-hop multi-path } architectures are indispensable for extending network coverage and ensuring robust connectivity in dense urban  or industrial environments \cite{xie2022dynamic,deng2023multi,huang2024distributed}.


However, realizing the potential of multi-hop multi-path networks requires navigating a trade-off between end-to-end delay and energy efficiency. 
Time-sensitive applications (e.g., industrial automation) demand low transmission time \cite{behnke2023real,hu2021deep}. 
Crucially, in a multi-path transmission setting, the overall service completion time is dictated by the {slowest routing path} (i.e., the  bottleneck flow). 
Therefore, simply minimizing the average delay is insufficient; instead, minimizing the min-max transmission time across all concurrent routing paths is paramount to prevent any single path from becoming a performance bottleneck.
Yet, minimizing this delay typically necessitates aggressive power allocation and bandwidth usage, which conflicts directly with the limited battery capacity of relay nodes \cite{zhao2019throughput,nguyen2019collaborative}. 
Purely delay-centric strategies risk draining the power of critical nodes  prematurely, while purely energy-centric strategies result in unacceptable transmission time.


Beyond the conflicting nature of these objectives, optimizing such a network presents two mathematical challenges. 
First, because our goal is to minimize the {maximum} transmission time among all routing paths, the resulting objective function is non-smooth and intractable. 
This leads to a ``jagged'' optimization landscape where the identity of the slowest path can suddenly jump from one route to another as resources are adjusted. 
This non-smoothness prevents gradient-based  optimization tools from finding an exact descent direction, as they often get stuck at cusps where differentiability fails. 
Second, the decision variables for routing and resource allocation are strongly coupled. 
The transmission time on any given path depends not only on the traffic load but also on the transmission rates of its constituent links. 
Crucially, these rates are not fixed; they are variables that change dynamically based on the allocation of limited power and bandwidth. 
This creates a complex interdependency where routing strategy and resource allocation are intertwined: altering the resource allocation changes the transmission rate, which in turn necessitates a re-evaluation of the optimal routing paths. 



Existing literature inadequately addresses these challenges, leaving a gap between theoretical requirements and practical solutions.
\begin{itemize}
	\item 
	While some studies employ alternating strategies to simplify the strongly coupled problem, they typically optimize the original problem or its subproblems using heuristic ideas \cite{chen2022energy,paul2021machine}. 
	These alternating updates lack rigorous theoretical guarantees, often leading to solution oscillation or failure to converge to a stationary point.	
	
	\item 
	Some existing studies focus on optimizing average delay or sum throughput \cite{feng2013device,zhao2017joint,al2022joint}. 
	These aggregate metrics {mask the bottleneck flows}. The resulting algorithms might achieve excellent average performance while leaving the bottleneck path congested, thereby violating the strict service completion time required by time-sensitive applications.
	
	\item 
	Standard gradient-based methods fail at cusps of the min-max function, prompting most researchers to resort to heuristic algorithms \cite{al2024min,li2023min}. 
	While flexible, these heuristics lack the theoretical convergence guarantees required for high-reliability communications.
\end{itemize}

To overcome these challenges, we  adopt an {overlay D2D architecture} with orthogonal resource slicing. Unlike underlay D2D, which suffers from severe mutual interference within cellular networks, the overlay architecture allocates dedicated spectrum bandwidth to D2D users. This architecture eliminates cross-tier interference. 
However, the total bandwidth is constrained. To prevent severe resource overutilization, the system must jointly optimize its  network-layer routing and physical-layer resources. While the overlay mode removes external interference, the joint optimization of routing and Shannon-capacity-based resource allocation remains highly coupled and non-convex. 
To tackle this, we formulate a generalized cost function. This function unifies the conflicting objectives of worst-case transmission time and network energy consumption. Furthermore, rather than relying on heuristics, we apply a smooth approximation to navigate the non-differentiable min-max objective. This transformation facilitates the application of the gradient-based optimization techniques and allows us to provide theoretical guarantees of convergence to a stationary point.


The main contributions of this paper are summarized as follows:
\begin{itemize}
	\item We formulate the joint routing and resource allocation (JRRA) problem in overlay D2D networks as a highly coupled, non-convex optimization problem. To overcome the prohibitive computational overhead, we propose a block coordinate descent (BCD) framework that jointly optimizes the network-layer routing and the physical-layer resource allocation. This framework decouples the intractable formulation into a sequence of solvable convex subproblems.
	
	\item 
	To address the routing subproblem, we develop an adaptive algorithmic framework. For dense topologies, we propose a matrix-free Frank-Wolfe (MF-FW) algorithm to achieve rapid execution. For high-precision scenarios, we develop a low-rank primal-dual interior-point method (LR-PDIPM). The LR-PDIPM leverages the Sherman-Morrison (SM) rank-1 formula to bypass the direct inversion of the dense reduced Newton system, reducing the per-iteration computational complexity from $\mathcal{O}(|\mathcal{K}|^3 |\mathcal{V}|^3)$ to $\mathcal{O}(|\mathcal{K}| |\mathcal{V}|^3)$. Regarding the physical-layer allocation, we exploit a time-domain perspective transformation and Lagrangian dual decomposition to split the problem into link-level subproblems. These subproblems are then solved to global optimality in parallel via coordinate descent integrated with inner 1D bisection search.
	
	\item Through rigorous derivation and comprehensive experiments, we prove that the proposed BCD framework converges to  an $\varepsilon$-neighborhood of a stationary point. Numerically, we validate its superiority in achieving the optimal delay-energy trade-off. The BCD variant integrated with LR-PDIPM achieves a maximum $9.14\times$ reduction in total energy consumption and up to an order of magnitude improvement in energy efficiency relative to the best baseline.  Meanwhile, the warm-started MF-FW variant identifies near-optimal solutions in mere seconds, serving as a highly practical engineering approach.
\end{itemize}

The remainder of this paper is organized as follows. Section \ref{sec:rel} reviews the related work. Section \ref{sec:model} presents the system model and formulates the JRRA problem. Section \ref{sec:algo} details the proposed BCD framework, including the adaptive network-layer routing algorithms and the physical-layer resource allocation algorithm. Section \ref{sec:convergence} provides a rigorous theoretical analysis regarding algorithmic convergence and computational complexity. Section \ref{sec:numerical_results} presents extensive numerical results and performance evaluations. Finally, Section \ref{sec:conclusion} concludes the paper.

\section{Related works}
\label{sec:rel}

\subsection{JRRA in multi-hop D2D networks}

Given the inherent complexity of joint multi-dimensional resource management in D2D communications, early literature predominantly relied on simplified models. To ensure tractability, early works often assumed predetermined link scheduling or single-channel scenarios \cite{jameel2018survey}. Consequently, many studies separated the tightly coupled variables, focusing strictly on single-dimensional power control and ignoring the joint allocation of spectrum resources. For instance, the work in \cite{abrardo2016distributed} addressed the non-convex sum-rate maximization problem by modeling power allocation as a potential game, achieving the convergence to local maxima. Similarly, the work in \cite{wu2015optimal} derived closed-form optimal power control strategies by partitioning circuit power consumption into distinct operational regions. While effective for single-dimensional resource management, these decoupled approaches fall short in modeling the strict interdependency among routing, spectrum, and power in multi-hop D2D networks. 

To capture these physical-layer dependencies, subsequent research transitioned toward joint resource allocation.  In \cite{wang2014energy}, the authors proposed an iterative combinatorial auction algorithm to allocate both power and radio resources efficiently. Furthermore, to address practical battery limitations, works such as \cite{liu2017economic} and \cite{tao2017review} incorporated Peukert’s law to model nonlinear battery lifetime, demonstrating the necessity of strict distance constraints for D2D pairs to maintain energy efficiency. Notably, the study in \cite{yang2015energy} investigated energy-efficient allocation specifically in D2D overlay networks. They highlighted that allocating spectrum resources orthogonally completely eliminates cross-tier interference, significantly simplifying interference management. However, these studies were confined strictly to physical-layer resource allocation for predefined routing paths, leaving the combinatorial complexities of network-layer routing completely unexplored.

In multi-hop settings, the coupling between physical-layer resource allocation and network-layer routing becomes the dominant performance bottleneck, prompting a shift toward cross-layer joint optimization. For example, the work in \cite{tran2021spectrum} investigated joint spectrum and power optimization tailored for multi-hop multi-path D2D video delivery. In the context of D2D-assisted decentralized learning, the work in \cite{liu2023communication} formulated a joint problem encompassing computing power, wireless resource allocation, and link selection to minimize a weighted sum of learning latency and energy consumption.

Despite these cross-layer advancements, a gap remains in fully capturing the intricate coupling between physical-layer resource allocation and network-layer routing, particularly when attempting to bound the worst-case delay for bottleneck flows.  Most of these joint frameworks either rely on aggregate metrics (e.g., weighted sums \cite{liu2023communication}) that mask bottleneck flows, or they assume a static network topology where routing paths are predetermined before resource allocation \cite{tran2021spectrum}. They fall short in accurately modeling the dynamic dependency of routing decisions on the continuously updating transmission rate, which fluctuate based on power and bandwidth allocation. These inaccurate allocation and routing strategies result in suboptimal network utilization, particularly failing to guarantee strict service completion times in latency-sensitive applications.

\subsection{Algorithmic paradigms: heuristics, learning, and mathematical programming}

Traditionally, the joint optimization of resource allocation and   multi-hop routing  is formulated using explicit discrete link selection alongside continuous variables for power and bandwidth. This  modeling approach results in a highly complicated mixed-integer nonlinear programming (MINLP) problem. To solve this problem, existing literature generally relies on three primary algorithmic paradigms: meta-heuristics, learning-based approaches, and deterministic mathematical programming.

\paragraph{Heuristic and meta-heuristic approaches}
To bypass the mathematical intractability of the original mixed-integer formulation, early implementations often resorted to heuristic rules and meta-heuristic algorithms. For instance, the authors in \cite{ergiz2021joint} resorted to a low-complexity heuristic that utilizes Breadth-First Search (BFS) for multi-path routing discovery, followed by a greedy resource allocation based on quality-to-rate ratios. The work in \cite{gures2026joint} proposed an alternating decomposition framework: the discrete routing variables are determined via a sequential greedy strategy, while the continuous resource allocation is optimized through an iterative potential game.  In the context of D2D communications within 5G heterogeneous networks, the authors in \cite{valiveti2017ehsd} proposed an exemplary handover scheme based on fuzzy logic to decentralize software-defined radio control. When addressing the mixed binary-continuous nature of joint routing and  resource allocation, genetic algorithms are frequently employed. For instance, the work in \cite{tran2021spectrum} utilized genetic algorithms to solve the spectrum and power optimization problem, demonstrating high empirical video throughput and energy efficiency.  However, despite their flexibility, meta-heuristics and fuzzy logic approaches lack theoretical convergence guarantees. They are particularly vulnerable to the non-smooth, ``jagged'' landscape of min-max objective functions, where they easily stagnate at suboptimal points without the rigorous theory to verify optimality or stationarity.

\paragraph{Learning-based and data-driven approaches}
To handle the inherent uncertainties and high computational costs of D2D networks, learning-based approaches have recently gained traction for managing the intractable nonlinear coupling and combinatorial complexity of joint routing and  resource allocation.  For instance, the authors in \cite{kazemi2018learning} formulated the joint channel and power allocation problem as an interactive learning task, specifically for scenarios where channel state information  is unknown a priori. By employing a recency-based Q-learning method, they successfully tracked the dynamic environment, achieving an order-optimal policy with strict mathematical convergence. Shifting the focus toward algorithmic efficiency, the work in \cite{xu2018d2d} leveraged a hierarchical extreme learning machine to bypass the heavy computational overhead of traditional iterative power control solvers. Their approach achieved near-optimal transmission rates while significantly reducing the algorithmic execution time. For fully decentralized generation of D2D networks, the study in \cite{ioannou2020distributed} utilized extended belief-desire-intention  agents and the AI-based algorithm to enable autonomous transmission mode selection, effectively balancing data rate maximization with power minimization under local computational constraints. To explicitly handle multi-hop topologies, the work in \cite{chen2022federated} implemented federated learning  with in-network aggregation to reduce mesh network bottlenecks. Furthermore, the work in \cite{zhang2022topology} leveraged graph neural networks  to capture spatial dependencies, predicting link usage for multi-commodity flows and drastically reducing computation time. While these data-driven paradigms offer rapid execution, they suffer from inherent black-box uninterpretability and require extensive retraining when network topologies dynamically change. More crucially, statistical learning methods cannot provide the strict, deterministic worst-case delay guarantees mandated by critical overlay D2D  networks.

\paragraph{Deterministic and mathematical programming}
Given the limitations of heuristic and learning-based models in highly dynamic environments, it is imperative to rely on deterministic mathematical programming. 
For pure physical-layer resource allocation, researchers have extensively leveraged convex and generalized convex optimization tools. In \cite{wang2017power}, the authors developed an approximate interior point method (IPM) to maximize sum throughput by simplifying the Hessian matrix inversion. To address fractional objectives such as energy efficiency, the authors in \cite{yang2015energy} transformed the mixed-integer nonlinear fractional programming problem into a parametric subtractive form, solving it via Dinkelbach's method and augmented Lagrangian techniques. Similarly, the work in \cite{abrardo2016distributed} modeled power allocation as a potential game to ensure convergence to a local maximum. While mathematically rigorous, these approaches primarily focus on continuous variables in single-hop or fixed-routing scenarios, ignoring the topological complexities of multi-hop paths.

To explicitly handle the integer constraints inherent in routing and link scheduling, classical operations research techniques have been applied. For instance, the pioneering research in \cite{capone2009routing} established a  mixed-integer linear programming framework for service overlay networks and solved it via the solver CPLEX.   In \cite{zhai2017energy}, the authors exploited the minimum-cost maximum-flow graph structure to design an exact branch-and-bound algorithm, providing a strict global optimality benchmark. The study in \cite{xu2022branch} formulated the unsplittable multi-commodity flow problem using mixed-integer programming and solved it via a branch-and-price algorithm. To accelerate these exact but computationally demanding methods, hybrid frameworks have emerged. For instance, the work in \cite{zhang2021resource} proposed a column generation framework that provides not only  an optimality benchmark but also structural support for a deep reinforcement learning agent in dynamic mmWave networks.  However, despite their mathematical exactness, these integer-focused methods suffer from severe scalability issues. More crucially, to maintain computational tractability, they often require linearizing or oversimplifying the nonlinear physical-layer capacity models. Therefore, these methods have limited capability to capture the exact, tight coupling between routing decisions and continuous resource allocation.  

To bridge the gap between discrete routing and continuous resource allocation, recent works have adopted decomposition techniques for the resulting MINLP models. In \cite{chen2022energy}, The authors utilized a two-layer approach, combining Dinkelbach's algorithm in the outer loop with alternating optimization in the inner loop.  The authors in \cite{omidkar2022reinforcement} addressed a highly non-convex MINLP by decomposing it into sequential subproblems, systematically integrating reinforcement learning for dynamic environment exploration with majorization-minimization and Dinkelbach's method for tractable continuous resource allocation. Furthermore, dealing with long-term system stability, the work in \cite{budhiraja2024joint} combined Lyapunov-based distributed schemes with successive convex approximation (SCA) to maximize the time-averaged network sum-rate.  

\subsection{Min-max objectives and the non-smooth coupling challenge}

While the aforementioned mathematical programming methods (e.g., alternating optimization and SCA) provide robust frameworks for JRRA  problem, they are challenged by the specific requirement of  worst-case performance in multi-hop networks. As highlighted earlier, minimizing the maximum path delay introduces a  non-differentiable objective function. Crucially, this min-max formulation intricately couples the resource variables across intersecting paths, rendering standard decoupled optimization techniques ineffective. Existing decomposition methods often bypass this mathematical bottleneck by reverting to tractable time-averaged metrics \cite{budhiraja2024joint, pu2016d2d} or sum-rate objectives \cite{abrardo2016distributed, wang2017power}, masking the network's most critical bottleneck flows. 

Historically, the majority of literature has favored aggregate metrics due to their smooth and differentiable nature. For instance, numerous studies focus on maximizing the sum rate or total throughput of D2D pairs \cite{abrardo2016distributed, wang2017power, zhou2016joint}. To handle larger topologies, the work in \cite{rasekh2019joint} maximized total backhaul throughput in mmWave mesh networks by transforming the resource allocation into a scalable mixed-integer linear programming. For delay-centric applications, the work in \cite{liu2022routing} minimized the total weighted transmission time in air-ground mesh networks via a three-stage decomposition. While some works, such as \cite{gu2015heuristic}, attempt to introduce proportional fairness by maximizing the logarithmic sum of data rates, these aggregate or weighted-sum objectives mask the performance of bottleneck flows. Because the objective function landscape is smooth, standard gradient descent or linear programming techniques apply seamlessly, but the resulting resource allocation often leaves bottleneck nodes or links severely congested.

Recognizing the necessity of strict worst-case guarantees, recent literature has shifted toward min-max (or max-min) formulations. In \cite{yang2016energy}, the authors investigated energy-efficient power control by maximizing the minimum individual energy efficiency among D2D pairs. In the realm of mobile edge computing, the study in \cite{zhao2020mobile} formulated a joint hybrid beamforming and resource allocation strategy explicitly designed to minimize the maximum system delay. However, explicitly targeting the worst-case path introduces a severe mathematical barrier: the $\max(\cdot)$ operator renders the objective function  non-differentiable at points where multiple path delays intersect. 
To handle this non-smoothness, standard approaches typically rely on epigraph transformations, introducing auxiliary variables to shift the non-differentiability from the objective function into a set of inequality constraints. 
Although theoretically sound, this reformulation severely inflates the problem's dimensionality. More critically, these joint constraints couple the feasible domains of the optimization variables, thereby destroying the Cartesian product structure essential for BCD. Consequently, when dealing with the highly nonlinear flow and resource variables inherent in multi-path routing, this forced constraint coupling renders standard alternating optimization ineffective, often trapping the algorithm in deadlocks or necessitating computationally prohibitive inner loops.

To resolve this non-smooth intractability without incurring a dimensional inflation, optimization theory offers rigorous smooth approximation techniques \cite{boyd2004convex, nesterov2005smooth}.
Recently, the Log-Sum-Exp (LSE) approximation has gained significant traction in signal processing and communications to handle complex saddle point and min-max formulations \cite{lu2020hybrid}. For example, the work in \cite{chen2019joint} converted a non-convex, non-differentiable min-max fairness problem in an uplink edge computing system into a tractable form using LSE approximation, effectively managing the nonlinear coupling of design variables. Similarly, the study in \cite{liesegang2026emf} employed the LSE approximation combined with successive convex optimization for power control. By replacing the non-smooth discrete minimum objective with a concave LSE lower bound, they bypassed the need for iterative feasibility-check loops (e.g., bisection searches), thereby enabling gradient-based optimization.

Despite these theoretical advancements, the application of smooth approximations in wireless networks remains largely confined to continuous physical-layer resource allocation or single-hop computation offloading. This min-max non-differentiability is rarely addressed in multi-hop networks, where flow routing and resource allocation are tightly coupled. Motivated by this limitation, we introduce the LSE approximation into the tightly coupled domain of multi-path flow and resource allocation, thereby transforming the non-smooth worst-case objective into a mathematically tractable structure. Unlike conventional methods that struggle with the combinatorial explosion inherent to discrete network topologies, we adopt a {node-arc continuous flow formulation}. Within this purely continuous yet highly nonlinear formulation, our LSE approach fully unlocks the tractability of gradient-based alternating optimization for the JRRA problem in multi-hop D2D networks.

\section{System model and problem formulation}
\label{sec:model}
The primary symbols and variables used in this paper are summarized in Table \ref{tab:notation}. 


\begin{table}[htbp]
	\centering
	\small 
	\caption{Summary of key notations}
	\label{tab:notation}
	\begin{tabularx}{\columnwidth}{@{}lX@{}}
		\toprule
		{Symbol} & {Description} \\ 
		\midrule
		\multicolumn{2}{@{}l}{\textit{{Sets and indices}}} \\
		$\mathcal{G}(\mathcal{V}, \mathcal{E})$ & Directed  network graph with node set $\mathcal{V}$ and link set $\mathcal{E}$ \\
		$(i,j) \in \mathcal{E}$ & Directed link from node $i$ to node $j$ \\
		$\mathcal{K}$ & Set of D2D communication commodities (traffic) \\
		$k \in \mathcal{K}$ & Index of a specific commodity \\

		\midrule
		\multicolumn{2}{@{}l}{\textit{{System parameters}}} \\
		$M^k$ & Total traffic demand for commodity $k$ (Mbits) \\
		$B$ & Total available bandwidth allocated to the overlay D2D network (MHz) \\
		$P_i^{\max}$ & Maximum transmission power budget at node $i$ (dBm) \\
		$h_{ij}$ & Channel gain over link $(i,j)$ \\
		$N_0$ & Noise power spectral density   (dBm/MHz) \\
		$\alpha, (1-\alpha)$ & Weighting coefficients for delay and energy in the generalized cost \\
		$\mu$ & Smoothing parameter for the LSE worst-case delay approximation \\
		
		\midrule
		\multicolumn{2}{@{}l}{\textit{{Decision variables}}} \\
		$x_{ij}^k \in [0,1]$ & Flow proportion of commodity $k$ routed over link $(i,j)$ \\
		$l_{ij} \ge 0$ & Bandwidth allocated to link $(i,j)$  \\
		$p_{ij} \ge 0$ & Transmission power allocated to link $(i,j)$ \\
		$t_{ij} \ge 0$ & Transmission time over link $(i,j)$ \\
		$r_{ij} \ge 0$ & Transmission rate over link $(i,j)$ \\
		$m_{ij} \ge 0$ & Traffic load over link $(i,j)$ \\
		\bottomrule
	\end{tabularx}
\end{table}

\subsection{Network and traffic model}

Consider a multi-hop D2D network, modeled as a directed graph $\mathcal{G} = (\mathcal{V}, \mathcal{E})$, where $\mathcal{V}$ denotes the set of nodes and $\mathcal{E}$ represents the set of directed wireless links (edges).  The network serves a set of data commodities $k$. Each commodity $k \in \mathcal{K}$ is defined by a source-destination node pair, and a specific end-to-end data demand $M^k$ (in Mbits).

To capture the routing flows without enumerating all possible paths, we define continuous arc-based flow variables. Let $x_{ij}^k \in [0, 1]$ denote the flow proportion of commodity $k$ routed through link $(i,j)$. The topological structure of the network is governed by the node-arc incidence matrix $A$. 
These flow variables  $x_{ij}^k $ must satisfy strict flow conservation constraints across all nodes, balancing the incoming and outgoing flows to match the source-destination requirements defined by vector $\mathbf{b}^k$.

\subsection{Physical layer and resource model}
The physical layer coordinates the allocation of spectrum bandwidth and transmission power. Under the overlay D2D network architecture, D2D communications are isolated from cellular traffic. Furthermore, the system employs exclusive subcarrier allocation to eliminate mutual co-channel interference among active D2D links. Within this interference-free environment, the physical resources are defined as follows:
\begin{itemize}
	\item Let $l_{ij}$ denote the spectrum bandwidth allocated to link $(i,j)$. The allocation is constrained by the  available bandwidth budget $B$, which is exclusively partitioned among all active links.
	\item Let $p_{ij}$ represent the transmission power assigned to link $(i,j)$. Each node $i$ is bounded by a maximum power budget $P_i^{\max}$, limiting the energy radiated by all its outgoing links.
\end{itemize}

In the considered D2D overlay network, the achievable transmission rate $r_{ij}$ over link $(i,j)$ is governed by the allocated resources $(\mathbf{l}, \mathbf{p})$. Given the exclusive resource allocation inherent to the overlay architecture, multi-user interference is eliminated. Assuming an additive white Gaussian noise channel, the noise-limited link transmission rate is formulated as
\begin{equation}
	r_{ij} = l_{ij} \log_2 \left( 1 + \frac{p_{ij} h_{ij}}{N_0 l_{ij}} \right),
	\label{eq:shannon_rate}
\end{equation}
where $N_0$ denotes the noise power spectral density. The parameter $h_{ij}$ defines the channel gain of link $(i,j)$, which follows the standard 3GPP propagation model $h_{ij} = 10^{-\mathrm{PL}(d_{ij})/10}$. The distance-dependent path loss $\mathrm{PL}(d_{ij})$ is given by
\begin{equation}
	\mathrm{PL}(d_{ij}) = 128.1 + 37.6 \log_{10}\left(\frac{d_{ij}}{1000}\right),
\end{equation}
where $d_{ij}$ represents the Euclidean distance  in meters between nodes $i$ and $j$.

\subsection{Problem formulation and arc-based approximation}
The primary objective is to strike an optimal trade-off between two competing performance metrics: the worst-case transmission time and the total energy consumption. Following standard practices in multi-objective network optimization \cite{liu2023communication}, we scalarize these metrics into a unified composite cost function. Specifically, we introduce a weighting parameter $\alpha \in [0, 1]$ to govern the trade-off between the two metrics.

Ideally, the transmission time for commodity $k$ must be evaluated over its explicit routed paths. Let $y_p^k$ denote the flow proportion of commodity $k$ routed through a specific path $p$ belonging to the predefined path set $\mathcal{P}_k$. The exact formulation of the objective function seeks to minimize the maximum path delay and the total energy simultaneously:
\begin{equation} \label{eq:original_obj}
	\begin{split}
		\min_{\mathbf{x}, \mathbf{y}, \mathbf{l}, \mathbf{p}} \quad & \alpha \max_{k \in \mathcal{K}, \, p \in \mathcal{P}_k} \left( y_p^k \sum_{(i,j)\in p}\frac{M^k}{r_{ij}} \right) \\
		& + (1-\alpha) \sum_{(i,j)\in \mathcal{E}} p_{ij} \frac{\sum_{k \in \mathcal{K}} x_{ij}^k M^k}{r_{ij}}.
	\end{split}
\end{equation}
For clarity in presenting our numerical evaluations in Section \ref{sec:numerical_results}, we introduce the shorthand notations $\hat{T} = \max_{k \in \mathcal{K}} \hat{T}_k$ to denote the network bottleneck delay, where $\hat{T}_k= \max_{ p \in \mathcal{P}_k} \left( y_p^k \sum_{(i,j)\in p}{M^k}/{r_{ij}} \right)$ represents the maximum delay of commodity $k$. Accordingly, the total network energy consumption is denoted as $\hat{E}= \sum_{(i,j)\in \mathcal{E}} p_{ij} {\sum_{k \in \mathcal{K}} x_{ij}^k M^k}/{r_{ij}}$.

However, directly solving the formulation \eqref{eq:original_obj} is computationally intractable due to the exponential cardinality of the candidate path set $\mathcal{P}_k$. Although modeled with continuous variables, this exponential dimensionality precludes direct optimization. Furthermore, to couple the path-based delay with the link-based energy consumption, the objective necessitates a global mapping constraint, i.e., $x_{ij}^k = \sum_{p \in \mathcal{P}_k: (i,j) \in p} y_p^k$. Such explicit mapping breaks the block-separable structure inherent to node-arc formulations, thereby invalidating standard decomposition methods. From a numerical optimization perspective, applying an LSE smooth approximation to the $\max(\cdot)$ operator over this exponentially large set $\mathcal{P}_k$ induces severe numerical overflow.

To resolve these bottlenecks, we approximate the exact path-based delay using arc-based variables. Specifically, we replace the single-path delay $y_p^k \sum_{(i,j)\in p}{M^k}/{r_{ij}}$ with the aggregate transmission time $\sum_{(i,j)\in \mathcal{E}} x_{ij}^k {M^k}/{r_{ij}}$. Because the total time consumed across all utilized links is  greater than or equal to the delay of any single end-to-end path, this summation establishes an upper bound for the original objective \eqref{eq:original_obj} and  eliminates the dependency on the intractable path set $\mathcal{P}_k$.  Crucially, unlike traditional formulations that sum times across all commodities (which masks the bottleneck flow), our aggregate time is calculated independently for each individual commodity $k$. By preserving the outer $\max_{k \in \mathcal{K}}$ operator, our objective identifies the bottleneck flow and minimizes the upper bound of its maximum transmission time.

Building upon this upper-bound approximation, we formulate the JRRA problem in multi-hop overlay D2D networks as the following continuous nonlinear programming  model:
\begin{subequations} \label{d2d_form_app}
	\begin{align}
		\min_{\mathbf{x}, \mathbf{l}, \mathbf{p}} \quad 
		& \alpha \max_{k \in \mathcal{K}} \sum_{(i,j)\in \mathcal{E}} x_{ij}^k \frac{M^k}{r_{ij}}
		+ (1-\alpha) \sum_{(i,j)\in \mathcal{E}} p_{ij} \frac{\sum_{k \in \mathcal{K}} x_{ij}^k M^k}{r_{ij}} \label{obj_app}\\
		\text{s.t.} \quad 
		& A \mathbf{x}^{k} = \mathbf{b}^k, \quad \forall k \in \mathcal{K}, \label{flow_cons}\\
		& r_{ij} = l_{ij} \log_2\!\left(1+\frac{p_{ij} h_{ij}}{N_{0}l_{ij}}\right),
		\quad \forall(i, j) \in \mathcal{E}, \label{rate_cons}\\
		& \sum_{(i,j)\in \mathcal{E}} l_{ij} \leq B, \label{bandwidth_cons}\\
		& \sum_{j \in \mathcal{V}_{out}(i)} p_{ij} \leq P_i^{\max}, \quad \forall i \in \mathcal{V}, \label{power_cons}\\
		& x_{ij}^k \geq 0, \quad \forall k \in \mathcal{K}, \ \forall(i, j) \in \mathcal{E}, \label{fraction_cons}\\
		& l_{ij} \geq 0, \ p_{ij} \geq 0, \quad \forall(i, j) \in \mathcal{E}. \label{nonneg_cons}
	\end{align}
\end{subequations}

The objective \eqref{obj_app} minimizes a generalized cost function, balancing the maximum delay across all commodities and the total energy consumption. Constraints \eqref{flow_cons} enforce the flow conservation, where $A$ is the node-arc incidence matrix and $\mathbf{b}^k$ denotes the normalized source-destination indicator vector for commodity $k$. The cross-layer coupling is captured by Shannon-capacity formula \eqref{rate_cons}, which maps the physical-layer resources $(l_{ij}, p_{ij})$ to the achievable network-layer rate $r_{ij}$. Furthermore, \eqref{bandwidth_cons} and \eqref{power_cons} impose the resource budgets, bounding the aggregate bandwidth allocation by $B$ and  limiting the cumulative transmission power over the outgoing neighbour set $\mathcal{V}_{out}(i)$ of each node $i$ to its budget $P_i^{\max}$. Finally, \eqref{fraction_cons} and \eqref{nonneg_cons} establish the non-negativity domain for all optimization variables.

Although formulation \eqref{d2d_form_app} bypasses the exponential dimensionality of explicit path enumeration, direct optimization remains precluded by two mathematical challenges: non-smoothness and non-convexity. First, the delay metric introduces the $\max_{k \in \mathcal{K}}(\cdot)$ operator, rendering the objective non-differentiable at cusps where multiple commodities achieve identical transmission times. While subgradient methods can address non-smoothness, deploying them in highly non-convex domains typically induces severe iterative zig-zagging and fails to provide rigorous convergence guarantees to a stationary point. Second, the objective \eqref{obj_app} introduces a highly non-linear coupling between the routing variable $x_{ij}^k$ and physical-layer resources $(l_{ij}, p_{ij})$ through the transimission rate $ r_{ij} $. Although optimizing the physical-layer resources under a fixed routing strategy yields a convex subproblem (as proven in Section \ref{sec:algo}), the joint continuous domain remains highly non-convex. Consequently, achieving algorithmic tractability necessitates a smooth approximation of the max operator to ensure continuous differentiability, followed by a BCD framework to jointly optimize the cross-layer variables.

\section{BCD framework for joint optimization}
\label{sec:algo}
To tackle the intractable non-convexity and variable coupling in the joint optimization problem \eqref{d2d_form_app}, we decouple this problem via a BCD framework, iteratively alternating between the network and physical layers.  For the network-layer routing subproblem, we propose two customized algorithms to accommodate diverse scenario requirements: the MF-FW algorithm tailored for fast execution by leveraging the first-order gradient direction, and the LR-PDIPM that bypasses dense matrix inversions via  SM formula for high-precision scenarios. For the physical-layer resource allocation, we employ a time-domain transformation to expose its hidden convexity. This subproblem is  exactly solved via a parallel dual ascent (PDA) algorithm based on the Lagrangian decomposition.  The rigorous theoretical analyses about these proposed algorithms are deferred to Section \ref{sec:convergence}.

\subsection{Path routing subproblem and smooth approximation}
With the physical-layer resources $(\mathbf{l}, \mathbf{p})$ fixed, the overall joint optimization \eqref{d2d_form_app} reduces to a multi-commodity network flow (MCNF) model. As established previously, the $\max_{k \in \mathcal{K}}(\cdot)$ operator renders objective function \eqref{obj_app} non-differentiable. To restore gradient tractability without compromising the bottleneck-optimization objective, we employ LSE  smoothing \cite{boyd2004convex}. By approximating the max operator as $\frac{1}{\mu} \ln \big( \sum_{k} \exp^{\mu (\cdot)} \big)$, we obtain the following differentiable and  convex MCNF formulation:
\begin{subequations}\label{eq:smoothed_routing}
	\begin{align}
		\min_{\mathbf{x}} \quad & F(\mathbf{x}) = \frac{\alpha}{\mu} \ln \left( \sum_{k \in \mathcal{K}} \exp^{\mu T_k(\mathbf{x})} \right) + \sum_{k \in \mathcal{K}} \sum_{(i,j) \in \mathcal{E}}  c_{ij}^{\text{E}} x_{ij}^k \label{obj_smoothed} \\
		\text{s.t.} \quad & A \mathbf{x}^{k} = \mathbf{b}^k, \quad \forall k \in \mathcal{K}, \\
		& x_{ij}^k \ge 0, \quad \forall k \in \mathcal{K}, \forall (i,j) \in \mathcal{E},
	\end{align}
\end{subequations}
where the parameter $\mu > 0$ dictates the approximation tightness,  $T_k(\mathbf{x}) = \sum_{(i,j)\in \mathcal{E}} x_{ij}^k {M^k}/{r_{ij}}$ is the aggregate transmission time for commodity $k$, and $c_{ij}^{\text{E}} = (1-\alpha) M^k {p_{ij}}/{r_{ij}}$ denotes the fixed link energy cost.

To solve \eqref{eq:smoothed_routing}, we exploit the structural sparsity of its Hessian matrix. Let $\beta_k(\mathbf{x}) = \exp^{\mu T_k} / \sum_m \exp^{\mu T_m}$ define the LSE gradient component. Despite the exponential inter-flow coupling introduced by LSE, the exact Hessian $\mathbf{H} = \nabla^2 F(\mathbf{x})$ elegantly decomposes into a block-diagonal matrix minus a rank-one correction:
\begin{equation} \label{eq:hessian_structure}
	\mathbf{H} = \alpha \mu \left( \mathbf{W} - \mathbf{v}\mathbf{v}^{\mathsf{T}} \right).
\end{equation}
Specifically, let the indices corresponding to the routing variables $x_{ij}^k$ and $x_{mn}^{k'}$ be denoted by the tuples $(k, ij)$ and $(k', mn)$. The elements of the block-diagonal matrix $\mathbf{W}$ and the column vector $\mathbf{v}$ are given by:
\begin{equation} \label{eq:hessian_elements}
	\begin{split}
			W_{(k, ij), (k', mn)} &= 
		\begin{cases} 
			\beta_k \left( {M^k}/{r_{ij}} \right) \left( {M^k}/{r_{mn}} \right), & \text{if } k = k' \\ 
			0, & \text{otherwise}
		\end{cases},\\
		v_{(k, ij)} &= \beta_k \left( {M^k}/{r_{ij}} \right).
	\end{split}
\end{equation}
Here, the block-diagonal term $\mathbf{W}$  captures the independent intra-commodity routing curvature, while the rank-one correction $\mathbf{v}\mathbf{v}^{\mathsf{T}}$ encapsulates the global cross-commodity coupling.  Leveraging \eqref{eq:hessian_structure} and \eqref{eq:hessian_elements}, we propose two customized algorithms.

\subsubsection{MF-FW algorithm for path routing}
For the requirement of fast execution, we propose a MF-FW algorithm to solve path routing subproblem \eqref{eq:smoothed_routing}, detailed in Algorithm \ref{algo:fw}

To prevent the zero-gradient stalling  phenomenon associated with non-bottleneck commodities, we augment the LSE objective \eqref{obj_app} with a linear transmission time regularization term, scaled by $\epsilon > 0$ (e.g., $10^{-6}$). The gradient-based link weights in \eqref{eq:fw_weights}  incorporate this regularization:
\begin{equation} \label{eq:fw_weights}
	w_{ij}^{k,(n)} = \alpha (\beta_k(\mathbf{x}^{(n)}) + \epsilon) \frac{M^k}{r_{ij}} + c_{ij}^{\text{E}}.
\end{equation}
Crucially, since this regularization introduces only a linear penalty, the second-order curvature of the objective remains unaffected. This preserves the exactness of the matrix-free Hessian computation in \eqref{eq:fw_curvature}.
Solving $|\mathcal{K}|$ independent shortest-path problems using these dynamic weights $ w_{ij}^{k,(n)} $ yields an auxiliary flow $\mathbf{y}^{(n)}$ and a descent direction $\mathbf{d}^{(n)} = \mathbf{y}^{(n)} - \mathbf{x}^{(n)}$. The algorithm's convergence is monitored via the standard Frank-Wolfe (FW) duality gap:
\begin{equation} \label{eq:fw_gap}
	\text{Gap}^{(n)} = -\nabla F(\mathbf{x}^{(n)})^{\mathsf{T}} \mathbf{d}^{(n)}.
\end{equation}

To suppress the zig-zagging phenomenon inherent to the standard FW algorithm near the optimum, we employ the directional curvature $C^{(n)} = (\mathbf{d}^{(n)})^{\mathsf{T}} \mathbf{H} \mathbf{d}^{(n)}$ to determine the step size. Using the rank-one correction structure in \eqref{eq:hessian_structure}, $C^{(n)}$ is calculated exactly in $\mathcal{O}(|\mathcal{K}|)$ operations using the pre-aggregated directional differences $u_k = \sum_{(i,j) \in \mathcal{E}} d_{ij}^k {M^k}/{r_{ij}}$:
\begin{equation} \label{eq:fw_curvature}
	C^{(n)} = \alpha \mu \left[ \sum_{k \in \mathcal{K}} \beta_k (u_k)^2 - \left( \sum_{k \in \mathcal{K}} \beta_k u_k \right)^2 \right].
\end{equation}

This algorithm then determines a truncated Newton-based step size:
\begin{equation} \label{eq:fw_step}
	\lambda^* = \min \left\{ 1, \frac{\text{Gap}^{(n)}}{C^{(n)}} \right\}.
\end{equation}
Crucially, the strict convexity of the quadratic approximation for objective \eqref{obj_smoothed} guarantees that this quadratic function of $\lambda$ decreases monotonically on the interval $[0,1]$. Thus, the truncation enforces $\lambda^* = 1$ whenever ${\text{Gap}^{(n)}}/{C^{(n)}} > 1$.

\begin{algorithm}[htbp]
	\caption{MF-FW algorithm for path routing} \label{algo:fw}
	\textbf{Input:} Network graph $\mathcal{G}(\mathcal{V}, \mathcal{E})$, demands $\{M^k\}$, initial flow $\mathbf{x}^{(0)}$, physical resources $(\mathbf{l}, \mathbf{p})$, weight $\alpha$, smoothing $\mu$, tolerance $\varepsilon$, bounds $N_{\max}$. \\
	\textbf{Initialize:} Set $n \leftarrow 0$, $\text{Gap}^{(0)} \leftarrow \infty$.
	\begin{algorithmic}[1]
		\While{ $n < N_{\max}$}
		\State Compute link weights $w_{ij}^{k,(n)}$  via \eqref{eq:fw_weights}.
		\State For each $k \in \mathcal{K}$, solve the shortest path problem using weights $w_{ij}^{k,(n)}$ to construct auxiliary flow $\mathbf{y}^{(n)}$.
		\State Determine the descent direction $\mathbf{d}^{(n)} = \mathbf{y}^{(n)} - \mathbf{x}^{(n)}$.
		
		\State Compute the FW duality gap $\text{Gap}^{(n)}$ via \eqref{eq:fw_gap}.
		\State \textbf{if} $\text{Gap}^{(n)} \le \varepsilon$ \textbf{then} \textbf{break}
		
		\State Compute directional time variations $u_k$ and the directional curvature $C^{(n)}$ via \eqref{eq:fw_curvature}.
		\State Compute Newton-based step size $\lambda^*$ via \eqref{eq:fw_step}.
		\State $\mathbf{x}^{(n+1)} \leftarrow \mathbf{x}^{(n)} + \lambda^* \mathbf{d}^{(n)}$, and $n \leftarrow n + 1$.
		\EndWhile
		\State \textbf{Output:} Extract optimal loop-free flows $ \mathbf{x}^* $ from $\mathbf{x}^{(n)}$ via BFS.
	\end{algorithmic}
\end{algorithm}
Due to the nature of convex combinations in standard FW, the flow  $\mathbf{x}^{(n)}$ may contain microscopic topological loops. In the output stage, we implement a standard BFS-based flow decomposition to extract loop-free flows.

\subsubsection{LR-PDIPM for path routing}
For the requirement of high-precision solutions, we propose a tailored LR-PDIPM, detailed in Algorithm \ref{algo:pd_ipm}. 
The core computational bottleneck of conventional IPMs lies in solving the KKT Newton system \cite{gondzio2012interior,gondzio2025interior}, which necessitates the dense matrix inversion of the modified Hessian $\boldsymbol{\Theta} = \mathbf{H} + \mathbf{X}^{-1}\mathbf{S}$. Here, $\mathbf{X} = \text{diag}(\mathbf{x})$ and $\mathbf{S} = \text{diag}(\mathbf{s})$  denote the diagonal matrices constructed from the primal variable vector $\mathbf{x}$ and the corresponding dual slack vector $\mathbf{s}$, respectively.

Substituting \eqref{eq:hessian_structure}, the modified Hessian inherits a rank-one correction: $\boldsymbol{\Theta} = \mathbf{D} - \tilde{\mathbf{v}}\tilde{\mathbf{v}}^{\mathsf{T}}$, where the base matrix $\mathbf{D} = \alpha \mu \mathbf{W} + \mathbf{X}^{-1}\mathbf{S}$ remains block-diagonal, and the scaled vector $\tilde{\mathbf{v}} = \sqrt{\alpha \mu} \mathbf{v}$. Let $\mathbf{A} = I_{|\mathcal{K}|} \otimes A$ denote the block-diagonal incidence matrix spanning all $|\mathcal{K}|$ commodities, distinguishing it from the single-commodity incidence matrix $A$. Finding the Newton direction requires solving the Schur complement system:
\begin{equation} \label{eq:schur}
	(\mathbf{A} \boldsymbol{\Theta}^{-1} \mathbf{A}^{\mathsf{T}}) \Delta \mathbf{y} = \mathbf{g},
\end{equation}
where $ \Delta \mathbf{y} $ is the dual step and RHS vector $\mathbf{g}$ is defined in \eqref{eq:rhs_g}.
Furthermore, the standard primal, dual, and central path KKT residuals are defined as:
\begin{equation} \label{eq:kkt_residuals}
	\mathbf{r}_p = \mathbf{A}\mathbf{x} - \mathbf{b}, \quad \mathbf{r}_d = -\nabla F(\mathbf{x}) + \mathbf{A}^{\mathsf{T}} \mathbf{y} + \mathbf{s}, \quad \mathbf{r}_c = \mathbf{X} \mathbf{S} \mathbf{1} - \tau \mathbf{1}.
\end{equation}

Direct factorization of the system \eqref{eq:schur} is computationally prohibitive, demanding $\mathcal{O}(|\mathcal{K}|^3 |\mathcal{V}|^3)$ operations. Instead, we invert $\boldsymbol{\Theta}$ via the SM formula: $\boldsymbol{\Theta}^{-1} = \mathbf{D}^{-1} + \rho \hat{\mathbf{v}} \hat{\mathbf{v}}^{\mathsf{T}}$, where $\hat{\mathbf{v}} = \mathbf{D}^{-1}\tilde{\mathbf{v}}$ and $\rho = (1 - \tilde{\mathbf{v}}^{\mathsf{T}} \hat{\mathbf{v}})^{-1}$. This decouples the Schur matrix into $\mathbf{M}_0 + \rho \mathbf{z}\mathbf{z}^{\mathsf{T}}$, where $\mathbf{M}_0 = \mathbf{A} \mathbf{D}^{-1} \mathbf{A}^{\mathsf{T}}$ and $\mathbf{z} = \mathbf{A} \hat{\mathbf{v}}$. To maintain this low-rank structure on the right-hand side, we construct the composite RHS vector $\mathbf{g}$ using the intermediate residual $\mathbf{v}_{\text{rhs}} = \mathbf{r}_d - X^{-1}\mathbf{r}_c$:
\begin{equation} \label{eq:rhs_g}
	\mathbf{g} = -\mathbf{r}_p - \mathbf{A} \mathbf{D}^{-1} \mathbf{v}_{\text{rhs}} - \rho (\hat{\mathbf{v}}^{\mathsf{T}} \mathbf{v}_{\text{rhs}}) \mathbf{z}.
\end{equation}
Crucially, because both $\mathbf{D}^{-1}$ and $\mathbf{A}$ are block-diagonal, the matrix $\mathbf{M}_0$ decomposes into $|\mathcal{K}|$ independent sparse graph Laplacians. By solving $\mathbf{M}_0 \hat{\mathbf{g}} = \mathbf{g}$ and $\mathbf{M}_0 \hat{\mathbf{z}} = \mathbf{z}$ via parallel Cholesky factorizations, the exact dual step $\Delta \mathbf{y}$ is recovered in merely $\mathcal{O}(|\mathcal{K}| |\mathcal{V}|^3)$ operations:
\begin{equation} \label{eq:dy_recovery}
	\Delta \mathbf{y} = \hat{\mathbf{g}} - \frac{\rho}{1 + \rho \mathbf{z}^{\mathsf{T}} \hat{\mathbf{z}}} (\mathbf{z}^{\mathsf{T}} \hat{\mathbf{g}}) \hat{\mathbf{z}}.
\end{equation}

Note that the block-diagonal matrix $\mathbf{D}_k$  embeds a local rank-one correction structure:
\begin{equation} \label{eq:local_rank1}
	\mathbf{D}_k = \mathbf{\Lambda}_k + \mathbf{u}_k \mathbf{u}_k^{\mathsf{T}},
\end{equation}
where $\mathbf{\Lambda}_k = \mathbf{X_k}^{-1}\mathbf{S_k}$ is a diagonal matrix, and $\mathbf{u}_k = \sqrt{\alpha \mu \beta_k} \mathbf{d}_k$ with its elements defined by $[\mathbf{d}_k]_{(i,j)} = M^k/r_{ij}$. Because $\mathbf{\Lambda}_k$ is diagonal, its inverse is trivial. Consequently, the block inverse $\mathbf{D}_k^{-1}$ can be computed in linear $\mathcal{O}(|\mathcal{E}|)$ operations via the SM formula:
\begin{equation} \label{eq:local_sm}
	\mathbf{D}_k^{-1} = \mathbf{\Lambda}_k^{-1} - \frac{1}{1 + \mathbf{u}_k^{\mathsf{T}} \mathbf{\Lambda}_k^{-1} \mathbf{u}_k} (\mathbf{\Lambda}_k^{-1}\mathbf{u}_k)(\mathbf{\Lambda}_k^{-1}\mathbf{u}_k)^{\mathsf{T}}.
\end{equation}
Since the IPM maintains $\mathbf{X} \succ 0$ and $\mathbf{S} \succ 0$, the scalar term $\mathbf{u}_k^{\mathsf{T}} \mathbf{\Lambda}_k^{-1} \mathbf{u}_k$ is positive. Subsequently, the primal and dual slack steps are recovered via straightforward back-substitution. This computation decomposes into sparse matrix-vector multiplications and linear-time inner products, utilizing the auxiliary residual $\mathbf{v}_{\text{step}} = \mathbf{A}^{\mathsf{T}} \Delta \mathbf{y} + \mathbf{v}_{\text{rhs}}$:
\begin{equation} \label{eq:dx_ds_recovery}
	\Delta \mathbf{x} = \mathbf{D}^{-1} \mathbf{v}_{\text{step}} + \rho (\hat{\mathbf{v}}^{\mathsf{T}} \mathbf{v}_{\text{step}}) \hat{\mathbf{v}}, \quad \Delta \mathbf{s} = -\mathbf{X}^{-1}\mathbf{r}_c - \mathbf{X}^{-1}\mathbf{S} \Delta \mathbf{x}.
\end{equation}
Finally, to maintain strict positivity of $ (\mathbf{x}, \mathbf{s}) $  and prevent severe step size truncation, Algorithm \ref{algo:pd_ipm} employs a wide neighborhood fraction-to-the-boundary rule. Let $\lambda \in (0, 1]$ denote the primal-dual step size and $\mathcal{F}^0$ denote the strictly feasible primal-dual interior set, defined as
\begin{equation}
	\mathcal{F}^0 = \left\{ (\mathbf{x}, \mathbf{y}, \mathbf{s}) \mid \mathbf{A}\mathbf{x} = \mathbf{b}, \mathbf{A}^{\mathsf{T}}\mathbf{y} + \mathbf{s} = \nabla f(\mathbf{x}), \mathbf{x} > \mathbf{0}, \mathbf{s} > \mathbf{0} \right\}.
\end{equation}
Consequently, the fraction-to-the-boundary rule confines the iterates within the wide neighborhood $\mathcal{N}_{-\infty}(\gamma)$, formulated as
\begin{equation}
	\mathcal{N}_{-\infty}(\gamma) = \left\{ (\mathbf{x}, \mathbf{y}, \mathbf{s}) \in \mathcal{F}^0 \mid x_i s_i \ge \gamma \mu_{\text{gap}}^{(n)} \right\},
\end{equation}
where $\mu_{\text{gap}}^{(n)} = (\mathbf{x}^{\mathsf{T}}\mathbf{s})/|\mathcal{K}||\mathcal{E}|$ defines the current duality gap. Furthermore, the adaptive centering strategy is used to update the parameter $\sigma^{(n)}$ based on the step size $\lambda$. This design enforces strong centering when the search direction is blocked by the boundary ($\lambda \ll 1$), while facilitating near-full Newton steps ($\lambda \approx 1$) to achieve rapid convergence within $\mathcal{N}_{-\infty}(\gamma)$.

\begin{algorithm}[t]
	\caption{LR-PDIPM for path routing} \label{algo:pd_ipm}
	\textbf{Input:} Network graph $\mathcal{G}(\mathcal{V}, \mathcal{E})$, demands $\{M^k\}$, initial flow $\mathbf{x}^{(0)}> \mathbf{0}$, physical resources $(\mathbf{l}, \mathbf{p})$, weight $\alpha$, smoothing $\mu$, tolerance $\varepsilon$, bound $\gamma \in (0,1) $, $ \sigma_{\min},\sigma_{\max} \in (0,1) $. \\
	\textbf{Initialize:}  Set $(\mathbf{y}^{(0)}, \mathbf{s}^{(0)}) > \mathbf{0}$, $\sigma^{(0)} \in (0,1)$, $n \leftarrow 0$, $ \mu_{\text{gap}} \leftarrow \infty $.
	\begin{algorithmic}[1]
		\While{$\max(\|\mathbf{r}_p\|_{\infty}, \|\mathbf{r}_d\|_{\infty}, \mu_{\text{gap}}) > \varepsilon$}
		\State Compute $\mu_{\text{gap}} = {\mathbf{x}^{\mathsf{T}} \mathbf{s}}/{(|\mathcal{K}||\mathcal{E}|)}$ and $\tau = \sigma^{(n)} \mu_{\text{gap}}$.
		\State Compute KKT residuals $(\mathbf{r}_p, \mathbf{r}_d, \mathbf{r}_c)$ via \eqref{eq:kkt_residuals}.
		
		\State For each block $k \in \mathcal{K}$, compute block inverse $\mathbf{D}_k^{-1}$ via \eqref{eq:local_sm}.
		\State $\mathbf{D}^{-1} = \text{blkdiag}(\mathbf{D}_1^{-1}, \dots, \mathbf{D}_K^{-1})$.
		
		\State Compute $\tilde{\mathbf{v}} = \sqrt{\alpha \mu} \mathbf{v}, \hat{\mathbf{v}} = \mathbf{D}^{-1}\tilde{\mathbf{v}}$, scalar $\rho = (1 - \tilde{\mathbf{v}}^{\mathsf{T}} \hat{\mathbf{v}})^{-1}$ and auxiliary vector $\mathbf{z} = \mathbf{A}\hat{\mathbf{v}}$ via \eqref{eq:hessian_elements}.
		\State Assemble the composite RHS $\mathbf{g}$ via \eqref{eq:rhs_g}.
		\State \textbf{For} each block $k \in \mathcal{K}$ \textbf{in parallel}:
		\State \quad Form $\mathbf{M}_{0,k} = \mathbf{A}_k \mathbf{D}_k^{-1} \mathbf{A}_k^{\mathsf{T}}$.
		\State \quad Solve $\mathbf{M}_{0,k} \hat{\mathbf{g}}_k = \mathbf{g}_k$ and $\mathbf{M}_{0,k} \hat{\mathbf{z}}_k = \mathbf{z}_k$ via sparse Cholesky.
		\State \textbf{End For}
		\State Stitch vectors $ \hat{\mathbf{g}}_k $ and $ \hat{\mathbf{z}}_k $ to form global $\hat{\mathbf{g}}$ and $\hat{\mathbf{z}}$.
		\State Recover dual step $\Delta \mathbf{y}$ via \eqref{eq:dy_recovery}.
		\State Recover primal and slack steps $(\Delta \mathbf{x}, \Delta \mathbf{s})$ via \eqref{eq:dx_ds_recovery}.
		\State Compute maximum step size $\lambda_{\max}$:
		\State $\lambda_{\max} = \min \left( 1, \ \min_{i: \Delta x_i < 0} \left( {-x_i}/{\Delta x_i} \right), \ \min_{i: \Delta s_i < 0} \left( {-s_i}/{\Delta s_i} \right) \right)$
		\State Initialize $\lambda \leftarrow 0.99 \lambda_{\max}$.  \label{linesea_start}
		\State Compute $\mu_{\text{gap}}(\lambda) = (\mathbf{x} + \lambda \Delta \mathbf{x})^{\mathsf{T}}(\mathbf{s} + \lambda \Delta \mathbf{s}) / (|\mathcal{K}||\mathcal{E}|)$
		\While{$\exists i \in \{1, \dots, |\mathcal{K}||\mathcal{E}|\} \text{ s.t. } (x_i + \lambda \Delta x_i)(s_i + \lambda \Delta s_i) < \gamma \mu_{\text{gap}}(\lambda)$}
		\State Update $\lambda \leftarrow c \cdot \lambda, \quad \text{where } c \in (0,1)$
		\EndWhile  \label{linesea_end}
		\State $\sigma^{(n+1)} = \max \left( \sigma_{\min}, \min \left( \sigma_{\max}, (1 - \lambda)^2 \right) \right)$.
		\State $(\mathbf{x}, \mathbf{y}, \mathbf{s}) \leftarrow (\mathbf{x}, \mathbf{y}, \mathbf{s}) + \lambda (\Delta \mathbf{x}, \Delta \mathbf{y}, \Delta \mathbf{s})$, and $n \leftarrow n + 1$.
		\EndWhile
		\State \textbf{Output:} Optimal primal-dual pair $(\mathbf{x}^*, \mathbf{y}^*, \mathbf{s}^*)$.
	\end{algorithmic}
\end{algorithm}

\subsection{Resource allocation subproblem and convex reformulation}
Given a fixed routing strategy $\mathbf{x}^*$ obtained from the network-layer optimization \eqref{eq:smoothed_routing}, the joint formulation \eqref{d2d_form_app} reduces to the physical-layer resource allocation subproblem. Let $m_{ij} = \sum_{k \in \mathcal{K}} x_{ij}^k M^k$ denote the aggregate traffic load assigned to link $(i,j)$. The primal subproblem is formulated as:
\begin{subequations} \label{eq:resource_primal}
	\begin{align}
		\min_{\mathbf{l}, \mathbf{p}, T} \quad 
		& \alpha T + (1-\alpha) \sum_{(i,j)\in \mathcal{E}} m_{ij} \frac{p_{ij}}{r_{ij}} \\
		\text{s.t.} \quad 
		& \sum_{(i,j)\in \mathcal{E}} x_{ij}^k \frac{M^k}{r_{ij}} \le T, \quad \forall k \in \mathcal{K}, \label{con_delay_primal} \\
		& \sum_{(i,j)\in \mathcal{E}} l_{ij} \leq B, \label{con_spec}\\
		&\sum_{j \in \mathcal{V}_{out}(i)} p_{ij} \leq P_i^{\max}, \quad \forall i \in \mathcal{V}, \label{con_power} \\
		& l_{ij} \geq 0, \quad p_{ij} \geq 0, \quad \forall (i,j) \in \mathcal{E},
	\end{align}
\end{subequations}
where the transmission rate $r_{ij} = l_{ij} \log_2(1 + {p_{ij}h_{ij}}/{N_0 l_{ij}})$  couples the bandwidth and power variables. This equation renders both the transmission time $1/r_{ij}$ in \eqref{con_delay_primal} and the energy consumption term $m_{ij} p_{ij}/r_{ij}$ highly non-convex with respect to $(l_{ij}, p_{ij})$, prohibiting direct global optimization.

To overcome this intractability, we project the resource allocation space from the power domain into the time domain. Let $t_{ij} = m_{ij}/r_{ij} > 0$  define the transmission time required to deliver the traffic load $m_{ij}$ over active link $(i,j)$. Substituting $r_{ij} = m_{ij}/t_{ij}$ yields $m_{ij}/t_{ij} = l_{ij} \log_2(1 + \frac{p_{ij}h_{ij}}{N_0 l_{ij}})$. Inverting this relation isolates the transmission power as a function of allocated bandwidth and transmission time:
\begin{equation} \label{eq:p_convex_func}
	p_{ij}(l_{ij}, t_{ij}) = \frac{N_0 l_{ij}}{h_{ij}} \left( 2^{{m_{ij}}/{l_{ij} t_{ij}}} - 1 \right).
\end{equation}
This time-domain transformation exposes a hidden convexity governed by the properties of perspective functions and convex composition.  In the following content, we optimize the subproblem \eqref{eq:resource_primal} over $ (l_{ij}, t_{ij}) $ and then recover the power variable $ p_{ij} $ via \eqref{eq:p_convex_func}.
\begin{lemma}[Convexity of perspective composite] \label{lemma:core_func}
	If $g(x)$ is a convex function, its perspective transformation $P(u,v) = v g(u/v)$ preserves joint convexity for all $v>0$. Consequently, the bivariate function $f(u, v) = uv(2^{1/uv}-1)$ is strictly and jointly convex for all $u, v > 0$.
\end{lemma}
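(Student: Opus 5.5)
The first claim is the standard perspective property, and I would simply recall it (cf.\ \cite{boyd2004convex}). If $g$ is convex, then for $v>0$ the function $P(u,v)=v\,g(u/v)$ has an epigraph that is the image of $\mathbb{R}_{+}\times\operatorname{epi} g$ under the conic map $(t,(s,r))\mapsto(ts,t,tr)$, which is a convex cone. Alternatively, one checks directly that $P(\theta z_1+(1-\theta)z_2)\le \theta P(z_1)+(1-\theta)P(z_2)$ by writing $u/v$ as a convex combination of $u_1/v_1$ and $u_2/v_2$ with weights $\theta v_1/v$ and $(1-\theta)v_2/v$. Applying this to $g(s)=2^{s}-1$ with $u=1$ gives the univariate function $\varphi(w)=w\,(2^{1/w}-1)$, which is convex on $w>0$. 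I will also use that $\varphi$ is decreasing.

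For the bivariate claim, I note that $f(u,v)=\varphi(uv)$. The composition rules do not apply directly, because the inner map $uv$ is neither convex nor concave on $\mathbb{R}_{++}^2$. I will therefore verify joint convexity through the Hessian. Write $w=uv$ and $c=\ln 2$. Differentiating gives
\begin{equation*}
f_{uu}=v^2\varphi''(w),\qquad f_{vv}=u^2\varphi''(w),\qquad f_{uv}=\varphi'(w)+w\varphi''(w),
\end{equation*}
so that
\begin{equation*}
\det\nabla^2 f = w^2\varphi''^2-(\varphi'+w\varphi'')^2 = -\varphi'(w)\bigl(\varphi'(w)+2w\varphi''(w)\bigr).
\end{equation*}
Direct computation gives
\begin{equation*}
\varphi'(w)=2^{1/w}\Bigl(1-\frac{c}{w}\Bigr)-1,\qquad \varphi''(w)=\frac{c^2}{w^3}\,2^{1/w}>0.
\end{equation*}
Hence $f_{uu}>0$, and what remains is to show that $\det\nabla^2 f>0$.

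This sign analysis is the main step. Substitute $s=c/w>0$. The first factor becomes $\varphi'=e^{s}(1-s)-1$. The function $e^{s}(1-s)-1$ vanishes at $s=0$ and has derivative $-se^{s}<0$, so $\varphi'<0$ for all $w>0$. The second factor becomes $\varphi'+2w\varphi''=e^{s}(1-s+2s^2)-1=:q(s)$. We have $q(0)=0$ and $q'(s)=e^{s}(3s+2s^2)>0$ for $s>0$, so $q(s)>0$ for all $s>0$. Therefore $\det\nabla^2 f=(-\varphi')\,q>0$.

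Together with $f_{uu}>0$, this shows that $\nabla^2 f\succ 0$ on $\mathbb{R}_{++}^2$, so $f$ is strictly and jointly convex there.
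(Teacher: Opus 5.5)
Your proof is correct and matches the paper's argument for the main claim: the same Hessian of $\varphi(uv)$, the same factorization $\det\nabla^2 f=-\varphi'(\varphi'+2w\varphi'')$, and the same substitution $s=\ln 2/w$ with two auxiliary functions that vanish at $0$ and have signed derivatives. You use $f_{uu}>0$ where the paper uses the trace, which is equivalent for a $2\times 2$ matrix. The only real difference is the perspective part, where your epigraph/convex-combination argument is slightly more general than the paper's Hessian computation, which tacitly assumes $g$ is twice differentiable.
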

\textit{Proof:} See \ref{app:proof_lemma_core}.

Leveraging Lemma \ref{lemma:core_func}, we linearize the delay constraints and convexify the power constraints, yielding the final convex formulation:
\begin{subequations} \label{eq:convex_global}
	\begin{align}
		\min_{\mathbf{l}, \mathbf{t}, T} \quad & \alpha T + (1-\alpha) \sum_{(i,j)\in \mathcal{E}} \frac{N_0}{h_{ij}} l_{ij} t_{ij} \left( 2^{{m_{ij}}/{l_{ij} t_{ij}}} - 1 \right) \label{con_obj_global} \\
		\text{s.t.} \quad & \sum_{(i,j)\in \mathcal{E}} \rho_{ij}^k t_{ij} \leq T, \quad \forall k \in \mathcal{K}, \label{con_delay_global} \\
		& \sum_{(i,j)\in \mathcal{E}} l_{ij} \leq B,\label{con_spec_global}  \\
		&\sum_{j \in \mathcal{V}_{out}(i)} p_{ij}(l_{ij}, t_{ij}) \leq P_i^{\max}, \quad \forall i \in \mathcal{V}, \label{con_power_global} \\
		& l_{ij} \ge 0, \quad t_{ij} > 0, \quad \forall (i,j)\in \mathcal{E},
	\end{align}
\end{subequations}
where the routing coefficient  $\rho_{ij}^k = x_{ij}^k M^k / m_{ij}$. 
To establish the theoretical foundation for global optimization, we formally prove the convex properties of \eqref{eq:convex_global}.

\begin{theorem}[Convexity of the time-domain formulation] \label{thm:convexity}
	The transformed time-domain resource allocation formulation \eqref{eq:convex_global} is a continuous convex optimization problem.
\end{theorem}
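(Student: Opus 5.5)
We verify convexity of \eqref{eq:convex_global} term by term: the objective is a convex function, each constraint function is convex (so its sublevel set is convex), and the domain is convex. Continuity holds because every term is a composition of continuous functions on the open set $l_{ij}>0$, $t_{ij}>0$. We restrict attention to active links with $m_{ij}>0$; links with $m_{ij}=0$ contribute zero power and zero cost and can be dropped or fixed. The term $\alpha T$ is linear. The delay constraints \eqref{con_delay_global} are linear in $(\mathbf{t},T)$ because the $\rho_{ij}^k$ are fixed once $\mathbf{x}^*$ is fixed. The bandwidth constraint \eqref{con_spec_global} and the sign constraints are linear. The domain $\{l_{ij}\ge 0,\ t_{ij}>0\}$ is a convex cone. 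So everything reduces to the nonlinear energy term and the power term.

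For the energy term we rescale to the bivariate function of Lemma \ref{lemma:core_func}. Set $u=l_{ij}/\sqrt{m_{ij}}$ and $v=t_{ij}/\sqrt{m_{ij}}$. Then $uv=l_{ij}t_{ij}/m_{ij}$, and
\begin{equation*}
l_{ij}t_{ij}\bigl(2^{m_{ij}/(l_{ij}t_{ij})}-1\bigr)=m_{ij}\,f(u,v).
\end{equation*}
This is a positive multiple of $f$ composed with a positive diagonal linear map, so joint convexity in $(l_{ij},t_{ij})$ follows from the lemma. Multiplying by the positive constant $(1-\alpha)N_0/h_{ij}$ and summing over links preserves convexity. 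The objective \eqref{con_obj_global} is therefore convex.

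The power constraint \eqref{con_power_global} needs a separate argument, because $p_{ij}(l,t)=\frac{N_0 l}{h_{ij}}(2^{m/(lt)}-1)$ is not of the form $lt\cdot(\cdot)$. We write it as a perspective in the bandwidth variable: with $g(s)=2^{s}-1$ and $s=m/t$, we have $p_{ij}=\frac{N_0}{h_{ij}}\,l\,g\bigl((m/t)/l\bigr)$.

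1. Let $\phi(w)=2^{1/w}-1$ for $w>0$. It is convex, since $\phi''(w)=2^{1/w}\ln 2\,(\ln 2+2w)/w^4>0$. It is also nonincreasing.
2. The perspective $(l,\,y)\mapsto l\,\phi(y/l)$ is jointly convex by the first part of Lemma \ref{lemma:core_func}. Taking $y=t/m$ gives $p_{ij}=\frac{N_0}{h_{ij}}\,l\,\phi\bigl(t/(m\,l)\cdot l\bigr)$ only after some care. The cleaner route is to write $p_{ij}=\frac{N_0}{h_{ij}}\,l\,\phi(lt/m)$ and verify convexity of $(l,t)\mapsto l\,\phi(lt/m)$ directly through its $2\times 2$ Hessian: check that the diagonal entries are positive and that the determinant is nonnegative.

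I expect step 2 to be the main obstacle. The factor $l$ multiplying a function of the product $lt$ makes the Hessian determinant computation delicate. If the determinant condition turns out to fail, the fallback is a change of variables that keeps the problem convex while preserving its equivalence with \eqref{eq:resource_primal}. One option is to use $\tau_{ij}=\ln t_{ij}$. Another is to exploit the fact that, at the optimum, the constraint can be replaced by its convex envelope.

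Once step 2 holds, $\sum_{j}p_{ij}$ is a sum of convex functions, so each power constraint defines a convex set. Together with the linear pieces above, \eqref{eq:convex_global} is then the minimization of a continuous convex function over an intersection of convex sets, which is what Theorem \ref{thm:convexity} claims.
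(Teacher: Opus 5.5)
Your treatment of the linear pieces and of the energy term is correct and is essentially the paper's argument. The paper rescales by $u=l_{ij}$, $v=t_{ij}/m_{ij}$ rather than your symmetric $1/\sqrt{m_{ij}}$, which makes no difference.

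\textbf{The gap is the power constraint} \eqref{con_power_global}, which is the only genuinely nonlinear constraint and therefore the heart of the theorem. You never establish that $p_{ij}(l,t)=\frac{N_0}{h_{ij}}\,l\,\bigl(2^{m_{ij}/(lt)}-1\bigr)$ is jointly convex. Step~2 announces a Hessian test, does not carry it out, and explicitly allows that it might fail. Your fallbacks cannot rescue the statement: substituting $\ln t_{ij}$ for $t_{ij}$, or replacing the constraint by its convex envelope, gives a different problem. The theorem asserts convexity of \eqref{eq:convex_global} itself in the variables $(\mathbf{l},\mathbf{t},T)$. The intermediate claim in step~2 is also wrong: with $y=t/m_{ij}$, the perspective $l\,\phi(y/l)$ equals $l\bigl(2^{m_{ij}l/t}-1\bigr)$, not $p_{ij}$. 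So the perspective of $\phi$ is the wrong object.

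The missing step is one line from what you wrote. You already had $p_{ij}=\frac{N_0}{h_{ij}}\,P(m_{ij}/t,\,l)$ with $P(s,l)=l\,g(s/l)$ and $g(s)=2^s-1$. By the first part of Lemma~\ref{lemma:core_func}, $P$ is jointly convex for $l>0$. Also $\partial P/\partial s=2^{s/l}\ln 2>0$, so $P$ is nondecreasing in $s$. Since $s=m_{ij}/t$ is convex on $t>0$ and $l$ enters affinely, set $l_\theta=\theta l_1+(1-\theta)l_2$ and $t_\theta=\theta t_1+(1-\theta)t_2$. Then
\begin{align*}
P\Bigl(\frac{m_{ij}}{t_\theta},l_\theta\Bigr)&\le P\Bigl(\theta\frac{m_{ij}}{t_1}+(1-\theta)\frac{m_{ij}}{t_2},\,l_\theta\Bigr)\\
&\le \theta P\Bigl(\frac{m_{ij}}{t_1},l_1\Bigr)+(1-\theta)P\Bigl(\frac{m_{ij}}{t_2},l_2\Bigr),
\end{align*}
which is exactly the paper's scalar-composition argument.

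Your Hessian route would also have worked. With $a=m_{ij}\ln 2/(lt)$ and $q(l,t)=l(e^{a}-1)$, one gets $q_{ll}=a^2e^{a}/l$, $q_{lt}=a^2e^{a}/t$, $q_{tt}=l\,a(2+a)e^{a}/t^2$, and $\det\nabla^2 q=2a^3e^{2a}/t^2>0$. So $p_{ij}$ is even strictly jointly convex. You need to supply one of these two arguments for the proof to be complete.
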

\begin{proof}[Proof sketch]
	The proof relies on establishing the joint convexity of the  non-linear energy and power functions via affine domain mapping and the scalar composition theorem. Detailed  derivations are provided in \ref{app:proof_thm_convexity}.
\end{proof}

Despite the theoretical convexity established in Theorem \ref{thm:convexity}, directly solving \eqref{eq:convex_global} using standard interior-point solvers (e.g., MOSEK) is practically infeasible. Modern solvers require models to be formulated exclusively using standard cone representations, primarily the exponential cone $\mathcal{K}_{\exp} = \{ (x_1, x_2, x_3) \mid x_1 \ge x_2 \exp(x_3 / x_2), x_2 > 0 \}$ and the rotated quadratic cone $\mathcal{K}_{\text{rquad}} = \{ (x_1, x_2, x_3) \mid 2 x_1 x_2 \ge x_3^2 \}$.

To map the power and energy functions into these cones, the bandwidth-time product $l_{ij} t_{ij}$ must be decoupled. If we introduce an auxiliary variable $Q_{ij}$ to replace the product, the required constraint $Q_{ij} \leq l_{ij} t_{ij}$  defines a non-convex region. Alternatively, representing the product via a squared variable $q_{ij}^2$ forms a valid rotated quadratic cone $2 l_{ij} t_{ij} \ge (\sqrt{2} q_{ij})^2$. However, substituting this squared term $q_{ij}^2$ into the energy function violates the 
requirement of the exponential cone $\mathcal{K}_{\exp}$, failing to obtain global optimum. Consequently, reformulating this model into solver-compatible cones inevitably introduces non-convex constraints, making standard  solvers inapplicable.

To circumvent the solver's incompatibility and  exploit the inherent convexity and separability of the network resources, we develop a PDA algorithm via Lagrangian dual decomposition.

\subsubsection{Parallel solution via dual decomposition}
By associating non-negative dual multipliers $\theta^k, \omega,$ and $\varphi_i$ with the delay, total bandwidth, and per-node power constraints respectively, the Lagrangian function associated with the convex formulation \eqref{eq:convex_global} is defined as:
\begin{align} \label{eq:lagrangian_func}
	\mathcal{L} &= \left( \alpha - \sum_{k \in \mathcal{K}} \theta^k \right) T - \omega B - \sum_{i \in \mathcal{V}} \varphi_i P_i^{\max} \nonumber \\
	&\quad + \sum_{(i,j)\in \mathcal{E}_{act}} \underbrace{\left[ (1-\alpha){E}_{ij} + \varphi_i p_{ij} + \omega l_{ij} + \left( \sum_{k \in \mathcal{K}} \theta^k \rho_{ij}^k \right) t_{ij} \right]}_{\Phi_{ij}(l_{ij}, t_{ij})},
\end{align}
where $\mathcal{E}_{act} \subseteq \mathcal{E}$ denotes the subset of active links with non-zero traffic load ($m_{ij} > 0$), and ${E}_{ij}= p_{ij} t_{ij}$ represents the energy consumption function of link $(i,j)$. To prevent numerical singularities (e.g., division by zero), the resource allocation is exclusively executed over the active link set $\mathcal{E}_{act}$, while inactive links are pruned.

Applying the KKT stationarity condition with respect to the auxiliary variable $T$ demands that $\frac{\partial \mathcal{L}}{\partial T} = 0$, restricting the delay multipliers to a scaled simplex: $\sum_{k \in \mathcal{K}} \theta^k = \alpha$. Consequently, the global $T$ term vanishes from the Lagrangian, decoupling the primal minimization into $|\mathcal{E}|$ independent, link-level 2D subproblems. 

Given fixed dual variables, each link  minimizes its aggregate cost function $\Phi_{ij}(l_{ij}, t_{ij})$ in parallel. Because $\Phi_{ij}$ is continuously differentiable and jointly convex in $(l_{ij}, t_{ij})$, its global minimum can be  obtained via coordinate descent. 
Specifically, for a fixed transmission time $t_{ij}$, the strict convexity of the link-level subproblem (Theorem \ref{thm:convexity}) guarantees the strict monotonicity of its partial derivative $\frac{\partial \Phi_{ij}}{\partial l_{ij}}$. This property ensures a unique root for the stationarity condition $\frac{\partial \Phi_{ij}}{\partial l_{ij}} = 0$, allowing the optimal bandwidth $l_{ij}$ to be exactly located via  1D bisection search.  A symmetric bisection procedure is subsequently applied to optimize $t_{ij}$ for a fixed $l_{ij}$. This coordinate descent method alternates iteratively until the link-level subproblem convergence is achieved. 
Once the primal variables $(l_{ij}, t_{ij})$ are  optimized, the problem \eqref{eq:convex_global} progressively approaches the global optimum by updating the dual variables via projected gradient ascent. To mitigate the numerical ill-conditioning caused by heterogeneous parameter scales (e.g., MHz for bandwidth versus mW for power) and to suppress severe dual multiplier oscillations during early iterations, we employ normalized relative gradients and non-negative projections. Let $\delta^{(n)}$ denote the step size satisfying standard divergent sum conditions (e.g., $\delta^{(n)} = \delta_0 / (n+1)$). The bandwidth, nodal power, and delay multipliers are updated as:
\begin{subequations} \label{eq:dual_update_resources}
	\begin{align}
			\omega^{(n+1)} &= \left[ \omega^{(n)} + \delta^{(n)} \left( \frac{\sum_{(i,j)\in \mathcal{E}_{act}} l_{ij}^{(n)} - B}{B} \right) \right]_+, \label{eq:update_omega} \\
			\varphi_i^{(n+1)} &= \left[ \varphi_i^{(n)} + \delta^{(n)} \left( \frac{\sum_{j \in \mathcal{V}_{out}(i)} p_{ij}^{(n)} - P_i^{\max}}{P_i^{\max}} \right) \right]_+, \quad \forall i \in \mathcal{V}, \label{eq:update_phi} \\
			\boldsymbol{\theta}^{(n+1)} &= \mathcal{P}_{\Delta} \left( \boldsymbol{\theta}^{(n)} + \delta^{(n)} \nabla_{\boldsymbol{\theta}} \mathcal{L} \right), \label{eq:update_theta}
		\end{align}
\end{subequations}
where $[\cdot]_+$ denotes the non-negative projection, $\nabla_{\theta^k} \mathcal{L} = \sum_{(i,j)\in \mathcal{E}_{act}} \rho_{ij}^k t_{ij}^{(n)}$, and $\mathcal{P}_{\Delta}$ projects the vector $\boldsymbol{\theta}$ onto the scaled simplex $\sum_{k \in \mathcal{K}} \theta^k = \alpha$. 
Specifically, the projection operator $\mathcal{P}_{\Delta}$ is evaluated by first computing the unprojected step $\tilde{\theta}^{k} = \theta^{k,(n)} + \delta^{(n)} \sum_{(i,j)\in \mathcal{E}_{act}} \rho_{ij}^k t_{ij}^{(n)}$. The exact projection is then achieved by finding a unique root $\nu^*$ such that:
\begin{equation} \label{eq:simplex_projection}
	\sum_{k \in \mathcal{K}} \left[ \tilde{\theta}^{k} - \nu^* \right]_+ = \alpha \implies \theta^{k,(n+1)} = \left[ \tilde{\theta}^{k} - \nu^* \right]_+, \quad \forall k \in \mathcal{K}.
\end{equation}
This scalar root-finding is a standard projection onto a simplex, which can be exactly solved in $\mathcal{O}(|\mathcal{K}| \log |\mathcal{K}|)$  via classical sorting-based algorithms.

To evaluate the convergence, we define the primal constraint violation $\Delta_{\text{prim}}$ and the dual stability metric $\Delta_{\text{dual}}$ based on the relative infinity norm:
\begin{subequations} \label{eq:stopping_criteria}
	\begin{align}
		\Delta_{\text{prim}} &= \max \left( \left[ \frac{\sum_{(i,j)\in \mathcal{E}_{act}} l_{ij}^{(n)} - B}{B} \right]_+, \ \max_{i \in \mathcal{V}} \left[ \frac{\sum_{j \in \mathcal{V}_{out}(i)} p_{ij}^{(n)} - P_i^{\max}}{P_i^{\max}} \right]_+ \right), \label{eq:delta_prim} \\
		\Delta_{\text{dual}} &= \max_{\mathbf{z} = \{\omega, \boldsymbol{\varphi}, \boldsymbol{\theta}\}} \left( \frac{\|\mathbf{z}^{(n+1)} - \mathbf{z}^{(n)}\|_\infty}{\|\mathbf{z}^{(n)}\|_\infty + \epsilon_{\text{num}}} \right),\  \varepsilon_{\text{num}}>0. \label{eq:delta_dual}
	\end{align}
\end{subequations}
The iterative procedure terminates when $\max(\Delta_{\text{prim}}, \Delta_{\text{dual}}) \le \varepsilon$. The detailed execution of PDA is presented in Algorithm \ref{algo:dual_ascent}.

\begin{algorithm}[t]
	\caption{PDA algorithm for resource allocation} 
	\label{algo:dual_ascent}
	\textbf{Input:} Initial $(\mathbf{l}^{(0)}, \mathbf{t}^{(0)})$, active link set $\mathcal{E}_{act}$,   traffic loads $m_{ij}$, routing fractions $\rho_{ij}^k$, parameters $B, P_i^{\max}, N_0, h_{ij}$, $l_{\min}, t_{\min}, T_{\max}^{\text{QoS}} $, $\varepsilon, \varepsilon_{\text{inner}}$,  $ Q_{\max}, N_{\max} $. \\
	\textbf{Initialize:} Initialize  $\omega^{(0)} = 0$, $\boldsymbol{\varphi}^{(0)} = \mathbf{0}$, and $\boldsymbol{\theta}^{(0)} = {\alpha}\mathbf{1}/{|\mathcal{K}|}$. Set  $n \gets 0$,  $\delta^{(0)} \gets \delta_0$, and  $\Delta_{\text{best}} \gets \infty$.
	\begin{algorithmic}[1]
		\While{$\max(\Delta_{\text{dual}}, \Delta_{\text{prim}}) > \varepsilon$ \textbf{and} $n \le N_{\max}$} \label{linksub_start}
		\State \textbf{Parallel For}  $(i,j) \in \mathcal{E}_{act}$:
		\State \quad Initialize  $q \gets 0$.
		\State \quad Warm start: $l_{ij}^{(0)} \gets l_{ij}^{(n)}$ and $t_{ij}^{(0)} \gets t_{ij}^{(n)}$.
		\State \quad \textbf{Repeat}
		\State \quad\quad $q \gets q + 1$
		\State \quad\quad Update $l_{ij}^{(q)}$ by solving $\frac{\partial \Phi_{ij}}{\partial l_{ij}} = 0$ given $t_{ij}^{(q-1)}$ via 1D bisection on $ [l_{\min}, B] $.  
		\State \quad\quad Update $t_{ij}^{(q)}$ by solving $\frac{\partial \Phi_{ij}}{\partial t_{ij}} = 0$ given $l_{ij}^{(q)}$ via 1D bisection on $ [t_{\min}, T_{\max}^{\text{QoS}}] $. 
		\State \quad \textbf{Until} $\max\big(|l_{ij}^{(q)} - l_{ij}^{(q-1)}|, |t_{ij}^{(q)} - t_{ij}^{(q-1)}|\big) \le \varepsilon_{\text{inner}}$ \textbf{or} $q \ge Q_{\max}$. \label{linksub_end}
		\State \quad Assign updated block solutions: $l_{ij}^{(n+1)} \gets l_{ij}^{(q)}$ and $t_{ij}^{(n+1)} \gets t_{ij}^{(q)}$.
		\State \textbf{End Parallel For}
		\State Update  $\omega^{(n+1)}, \boldsymbol{\varphi}^{(n+1)},\boldsymbol{\theta}^{(n+1)}$ via \eqref{eq:dual_update_resources}.
		\State Compute $\Delta_{\text{prim}}, \Delta_{\text{dual}}$ via \eqref{eq:stopping_criteria}.
		\State \textbf{If} $\Delta_{\text{prim}} \le \varepsilon$ \textbf{and} $\Delta_{\text{dual}} < \Delta_{\text{best}}$ \textbf{then}  \label{pda_poc_start}
		\State \quad Update  $(\mathbf{l}^*, \mathbf{t}^*) \gets (\mathbf{l}^{(n+1)}, \mathbf{t}^{(n+1)})$ and $\Delta_{\text{best}} \gets \Delta_{\text{dual}}$.
		\State \textbf{End If}  \label{pda_poc_end}
		\State Compute  $T^{(n+1)} = \max_{k \in \mathcal{K}} \sum_{(i,j)\in \mathcal{E}_{act}} \rho_{ij}^k t_{ij}^{(n+1)}$.
		\State Update  $\delta^{(n+1)} = \delta_0 / \sqrt{n+2}$.
		\State $n \gets n + 1$.
		\EndWhile
		\State Recover $ \mathbf{p}^* $ via \eqref{eq:p_convex_func}.
		\State \textbf{Output:} Optimal resources $(\mathbf{l}^*, \mathbf{p}^*)$.
	\end{algorithmic}
\end{algorithm}

\subsection{Overall BCD framework}
The joint optimization problem \eqref{d2d_form_app} is  non-convex due to the coupling between the routing variables $\mathbf{x}$ and the physical resources $(\mathbf{l}, \mathbf{p})$. To solve  this problem while exploiting the bi-convex structure of the decomposed two subproblems, we propose a BCD framework. This framework iteratively alternates between the network-layer routing and the physical-layer resource allocation.

The strictly feasible initial point $(\mathbf{l}^{(0)}, \mathbf{p}^{(0)})$ is  generated via one iteration of  PDA algorithm upon the $ K $-shortest paths \cite{yen1971finding}. Let $\tilde{n}$ denote the outer BCD iteration index.  Each subsequent iteration $n \ge 0$ proceeds through the following two phases:

\textbf{Phase 1: path routing optimization.}
Given the fixed resources $(\mathbf{l}^{(\tilde{n})}, \mathbf{p}^{(\tilde{n})})$, the transmission rate $r_{ij}^{(\tilde{n})}$ and energy costs $c_{ij}^{\text{E},(\tilde{n})}$ degenerate into constants. The joint problem  collapses into a convex routing subproblem. We acquire the optimal flow $\mathbf{x}^{(\tilde{n}+1)}$ by solving the routing subproblem \eqref{eq:smoothed_routing}.
The proposed framework adaptively deploys either the MF-FW (Algorithm \ref{algo:fw}) for fast execution, or the LR-PDIPM (Algorithm \ref{algo:pd_ipm}) for high-precision solutions. Regardless of the deployed algorithm, the exact global optimum of this convex routing subproblem is guaranteed, thereby preserving the monotonic descent property of the overall BCD framework.

\textbf{Phase 2: resource allocation optimization.}
With the updated routing strategy $\mathbf{x}^{(\tilde{n}+1)}$ fixed, the link traffic loads $m_{ij}^{(\tilde{n}+1)} = \sum_{k \in \mathcal{K}} x_{ij}^{k,(\tilde{n}+1)} M^k$ transition from decision variables to fixed parameters. The optimization reduces to the convex time-domain formulation \eqref{eq:convex_global}. By executing the PDA  (Algorithm \ref{algo:dual_ascent}), we compute the optimal bandwidth and transmission time $(\mathbf{l}^{(\tilde{n}+1)}, \mathbf{t}^{(\tilde{n}+1)})$ under current $\mathbf{x}^{(\tilde{n}+1)}$. Subsequently, the physical resource vector is recovered to $ (\mathbf{l}^{(\tilde{n}+1)}, \mathbf{p}^{(\tilde{n}+1)})$ via \eqref{eq:p_convex_func}.

\textbf{Termination.}
To prevent  premature stopping of the heuristic, the BCD procedure evaluates the relative successive differences of both variable blocks. The algorithm terminates  when:
\begin{equation} \label{eq:bcd_termination}
	\Delta_{\text{out}} :=\max_{\mathbf{z} = \{\mathbf{}, \mathbf{l}, \mathbf{p}\}} \left( \frac{\|\mathbf{z}^{(\tilde{n}+1)} - \mathbf{z}^{(\tilde{n})}\|_\infty}{\|\mathbf{z}^{(\tilde{n})}\|_\infty + \varepsilon_{\text{num}}} \right) \le \varepsilon_{\text{out}},
\end{equation}
where $\varepsilon_{\text{out}}$ is the outer tolerance, and $\varepsilon_{\text{num}} > 0$. The execution of the BCD framework is summarized in Algorithm \ref{algo:bcd_master}.

\begin{algorithm}[htbp]
	\caption{BCD framework for joint optimization \eqref{d2d_form_app}} \label{algo:bcd_master}
	\textbf{Input:} Network graph $\mathcal{G}(\mathcal{V}, \mathcal{E})$, demands $\{M^k\}$, physical parameters $(\mathbf{h}, N_0, B, \mathbf{P}^{\max})$, objective weight $\alpha$, smoothing $\mu$, tolerances $\varepsilon_{\text{out}}$, max outer iterations $N_{\text{out}}$. \\
	\textbf{Initialize:} Initial solution $ (\mathbf{x}^{(0)},\mathbf{l}^{(0)}, \mathbf{p}^{(0)}) $. Set outer iteration $\tilde{n} \leftarrow 0$, global error $\Delta_{\text{out}} \leftarrow \infty$, pocket parameter $\mathcal{J}^* \leftarrow \infty$. 
	\begin{algorithmic}[1]
		\While{$\Delta_{\text{out}} > \varepsilon_{\text{out}}$ \textbf{and} $\tilde{n} < N_{\text{out}}$}
		\State \quad Given $(\mathbf{l}^{(\tilde{n})}, \mathbf{p}^{(\tilde{n})})$, compute transmission rate $r_{ij}^{(\tilde{n})}$ via \eqref{eq:shannon_rate} and energy costs $c_{ij}^{\text{E},(\tilde{n})} = (1-\alpha) M^k p_{ij}^{(\tilde{n})}/r_{ij}^{(\tilde{n})}$.
		\State \quad Update routing variables $\mathbf{x}^{(\tilde{n}+1)}$ via Algorithm \ref{algo:fw} (MF-FW) or Algorithm \ref{algo:pd_ipm} (LR-PDIPM).
		\State \quad Compute link load $m_{ij}^{(\tilde{n}+1)} = \sum_{k \in \mathcal{K}} x_{ij}^{k,(\tilde{n}+1)} M^k$.
		\State \quad Update time-domain resources $(\mathbf{l}^{(\tilde{n}+1)}, \mathbf{t}^{(\tilde{n}+1)})$ via Algorithm \ref{algo:dual_ascent} (PDA).
		\State \quad Recover transmission power $ p_{ij}^{(\tilde{n}+1)} $ via \eqref{eq:p_convex_func}.
		
		\State \quad Compute the  objective $\mathcal{J}^{(\tilde{n}+1)}$ via \eqref{obj_app}.
		\State \quad \textbf{if} $\mathcal{J}^{(\tilde{n}+1)} < \mathcal{J}^*$ \textbf{then} $\mathcal{J}^* \leftarrow \mathcal{J}^{(\tilde{n}+1)}$ and $(\mathbf{x}^*, \mathbf{l}^*, \mathbf{p}^*) \leftarrow (\mathbf{x}^{(\tilde{n}+1)}, \mathbf{l}^{(\tilde{n}+1)}, \mathbf{p}^{(\tilde{n}+1)})$. \label{bcd_pocket}
		
		\State \quad Update $\Delta_{\text{out}}$ via \eqref{eq:bcd_termination}.		
		\State \textbf{Iteration Update:} $\tilde{n} \leftarrow \tilde{n} + 1$.
		\EndWhile
		\State \textbf{Output:} Globally stationary routing and resource allocation $(\mathbf{x}^*, \mathbf{l}^*, \mathbf{p}^*)$. 
	\end{algorithmic}
\end{algorithm}

\section{Theoretical analysis and convergence guarantees}
\label{sec:convergence}

In this section, we provide a rigorous theoretical analysis of the proposed BCD framework. We establish the convergence analysis for the customized subproblem algorithms. Specifically, MF-FW, LRPDIPM  and PDA algorithms. Subsequently, we synthesize these components to prove the convergence of BCD algorithm. Finally, we conclude with an algorithmic complexity analysis.

\subsection{Convergence of Network-Layer Routing Algorithms}

The network-layer optimization reduces to a convex routing subproblem  \eqref{eq:smoothed_routing} featuring the smoothed objective $F(\mathbf{x})$. Define $T_{\max}(\mathbf{x}) = \max_{k \in \mathcal{K}} T_k(\mathbf{x})$ as the maximum aggregate transmission time across all commodities.
\begin{proposition}[LSE approximation bound] \label{prop:smoothing_gap}
	For any scaling parameter $\mu > 0$, the LSE-smoothed bottleneck function $\tilde{T}_{\max}(\mathbf{x}, \mu) = \frac{1}{\mu} \ln \left( \sum_{k \in \mathcal{K}} \exp^{\mu T_k(\mathbf{x})} \right)$ uniformly bounds the true maximum delay for any feasible routing flow $\mathbf{x}$:
	\begin{equation}
		T_{\max}(\mathbf{x}) \le \tilde{T}_{\max}(\mathbf{x}, \mu) \le T_{\max}(\mathbf{x}) + \frac{\ln |\mathcal{K}|}{\mu}.
	\end{equation}
\end{proposition}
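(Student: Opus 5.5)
The plan is to work pointwise: fix a feasible flow $\mathbf{x}$ and abbreviate $T_k = T_k(\mathbf{x})$ and $T_{\max} = \max_{k \in \mathcal{K}} T_k$. The feasibility of $\mathbf{x}$ matters only in that every $T_k$ is a finite real number; this holds because the rates $r_{ij}$ are fixed and positive on the links in use. Since $\exp$ and $\ln$ are strictly increasing and $\mu > 0$, both inequalities reduce to elementary bounds on the sum $S = \sum_{k \in \mathcal{K}} \exp^{\mu T_k}$.

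For the lower bound, let $k^\star$ be an index attaining the maximum. Every term of $S$ is positive, so $S \ge \exp^{\mu T_{k^\star}} = \exp^{\mu T_{\max}}$. Applying $\frac{1}{\mu}\ln(\cdot)$, which is monotone increasing because $\mu>0$, gives $\tilde{T}_{\max}(\mathbf{x},\mu) \ge T_{\max}(\mathbf{x})$.

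For the upper bound, each term satisfies $\exp^{\mu T_k} \le \exp^{\mu T_{\max}}$. Summing over the $|\mathcal{K}|$ commodities gives $S \le |\mathcal{K}| \exp^{\mu T_{\max}}$. Taking $\frac{1}{\mu}\ln$ yields
\begin{equation*}
\tilde{T}_{\max}(\mathbf{x},\mu) \le \frac{1}{\mu}\left(\ln|\mathcal{K}| + \mu T_{\max}(\mathbf{x})\right) = T_{\max}(\mathbf{x}) + \frac{\ln|\mathcal{K}|}{\mu}.
\end{equation*}
Neither constant depends on $\mathbf{x}$, so the bound is uniform over all feasible flows, as claimed.

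There is no real obstacle here. The only point to state explicitly is where positivity of $\mu$ enters: it fixes the direction of both inequalities when we multiply by $\mu$ and then divide by it. A remark can also note the consequence that will be used later: the gap between the smoothed and true objectives is at most $\alpha \ln|\mathcal{K}|/\mu$, and this vanishes as $\mu \to \infty$.
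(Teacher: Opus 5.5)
Your proof is correct and is the standard argument: bound $\sum_{k\in\mathcal{K}} e^{\mu T_k}$ below by its largest term and above by $|\mathcal{K}|$ times that term, then apply the increasing map $\frac{1}{\mu}\ln(\cdot)$. The paper writes out no derivation and instead cites Boyd and Vandenberghe, Sec.~3.1.5, where this same argument appears, so your route is essentially the paper's.
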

\begin{proof}
	This bounding property is a fundamental result in convex analysis (see, e.g., Boyd and Vandenberghe \cite[Sec. 3.1.5]{boyd2004convex} and Nesterov's smoothing techniques \cite{nesterov2005smooth}). Thus, the detailed  derivation is omitted here.
\end{proof} 

\begin{remark}[Theoretical trade-off on smoothing parameter $\mu$]
	Proposition \ref{prop:smoothing_gap} reveals a trade-off governed by the smoothing parameter $\mu$. As $\mu \to \infty$, the approximation gap ${\ln |\mathcal{K}|}/{\mu}$ vanishes, recovering the exact non-smooth bottleneck objective. However, as established in Lemma \ref{lemma:lipschitz_hessian}, the Lipschitz constant of the Hessian scales quadratically with the smoothing parameter, i.e., $L_H = \mathcal{O}(\mu^2)$. Consequently, an excessively large $\mu$ quadratically amplifies the local curvature variations, rendering the Newton system severely ill-conditioned. This forces the LR-PDIPM to take restricted step sizes ($\lambda \ll 1$) to maintain interiority, thereby  degrading the overall convergence rate. In practice, selecting a moderately large constant for $\mu$ strikes a necessary balance, guaranteeing numerical stability in the Newton directions while maintaining a sufficiently tight approximation of the true maximum delay.
\end{remark}
\subsubsection{Convergence of the MF-FW algorithm}
To establish the convergence rate of the MF-FW (Algorithm \ref{algo:fw}), we first prove that the LSE-smoothed objective exhibits Lipschitz continuous gradients (i.e., $L$-smoothness) with respect to the flow routing variables. Let $\mathcal{X}$ denote the feasible routing domain defined by the flow conservation and non-negativity constraints, i.e., $\mathcal{X} = \left\{ \mathbf{x} \mid A \mathbf{x}^k = \mathbf{b}^k, \mathbf{x}^k \ge \mathbf{0}, \forall k \in \mathcal{K} \right\}$.

\begin{lemma}[Convexity and $L$-Smoothness] \label{lem:L_smooth}  
	The smoothed objective function $F(\mathbf{x})$ in \eqref{eq:smoothed_routing} is convex and $L$-smooth over  $\mathcal{X}$. Since the individual commodity delay $T_k(\mathbf{x})$ is linear with respect to $\mathbf{x}$, the Lipschitz constant $L$ is bounded by the maximal gradient norm of the delay functions:
	\begin{equation}
		L \le \alpha \mu \max_{k \in \mathcal{K}} \|\nabla T_k(\mathbf{x})\|_2^2 \le \alpha \mu C_{\max}^2,
	\end{equation}
	where $C_{\max} = \max_{k \in \mathcal{K}} \sum_{(i,j)\in \mathcal{E}} {M^k}/{r_{ij}}$ denotes the maximum cumulative path delay coefficient over the network.
\end{lemma}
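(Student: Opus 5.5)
We prove convexity and the Lipschitz bound together, using the structure of $F$ as a composition. Write $F(\mathbf{x}) = \alpha\, \mathrm{lse}_\mu(\mathbf{T}(\mathbf{x})) + \mathbf{c}^{\mathsf{T}}\mathbf{x}$. Here $\mathbf{T}(\mathbf{x}) = (T_1(\mathbf{x}),\dots,T_{|\mathcal{K}|}(\mathbf{x}))$ is linear, $\mathbf{T}(\mathbf{x}) = \mathbf{G}\mathbf{x}$, and the $k$-th row of $\mathbf{G}$ is $\nabla T_k^{\mathsf{T}}$. This row is supported only on the block of commodity $k$, with entries $M^k/r_{ij}$. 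The function $\mathrm{lse}_\mu(\mathbf{z}) = \frac{1}{\mu}\ln\sum_k e^{\mu z_k}$ is convex, because the log-sum-exp function is convex and we only rescale it. Convexity of $F$ therefore follows from composing a convex function with an affine map and adding a linear term. The feasible set $\mathcal{X}$ is a polyhedron, so it plays no role beyond restricting the domain.

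For $L$-smoothness we bound the Hessian spectrally. By \eqref{eq:hessian_structure}, $\nabla^2 F(\mathbf{x}) = \alpha\mu\,\mathbf{G}^{\mathsf{T}}\big(\mathrm{diag}(\boldsymbol{\beta}) - \boldsymbol{\beta}\boldsymbol{\beta}^{\mathsf{T}}\big)\mathbf{G}$, which agrees with $\mathbf{W}-\mathbf{v}\mathbf{v}^{\mathsf{T}}$. For any direction $\mathbf{d}$, set $u_k = \nabla T_k^{\mathsf{T}}\mathbf{d}$, exactly as in \eqref{eq:fw_curvature}. This gives
\[
\mathbf{d}^{\mathsf{T}}\nabla^2F\,\mathbf{d} = \alpha\mu\Big[\textstyle\sum_k\beta_k u_k^2 - \big(\sum_k\beta_k u_k\big)^2\Big].
\]
The bracket is the variance of $u_k$ under the probability vector $\boldsymbol{\beta}$, so the quadratic form is nonnegative, which again confirms convexity. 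The variance is also at most $\sum_k\beta_k u_k^2 \le \max_k u_k^2$.

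Because the gradients $\nabla T_k$ have disjoint supports, Cauchy--Schwarz applied block by block gives $u_k^2 \le \|\nabla T_k\|_2^2\,\|\mathbf{d}^k\|_2^2 \le \|\nabla T_k\|_2^2\,\|\mathbf{d}\|_2^2$. Hence $\lambda_{\max}(\nabla^2F(\mathbf{x})) \le \alpha\mu\max_k\|\nabla T_k\|_2^2$ at every $\mathbf{x}$, and this bound does not depend on $\mathbf{x}$. The mean value theorem along segments then gives $\|\nabla F(\mathbf{x})-\nabla F(\mathbf{y})\|_2 \le L\|\mathbf{x}-\mathbf{y}\|_2$, where the segments stay in $\mathcal{X}$ because $\mathcal{X}$ is convex; in fact the bound holds on the whole space.

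Finally, $\|\nabla T_k\|_2^2 = \sum_{(i,j)} (M^k/r_{ij})^2 \le \big(\sum_{(i,j)} M^k/r_{ij}\big)^2 \le C_{\max}^2$, since all the terms are nonnegative. This gives the second inequality of the lemma.

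The only delicate point is bookkeeping rather than analysis. The rates $r_{ij}$ are fixed and positive in this subproblem, so every constant is finite; links with $r_{ij}=0$ must be excluded or pruned, just as inactive links are pruned in Phase 2. We also need $L$ to be a uniform bound on the Hessian and not a pointwise one, and the variance bound $\le \max_k u_k^2$ is what makes the result independent of $\boldsymbol{\beta}(\mathbf{x})$. Everything else is routine.
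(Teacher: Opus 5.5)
Your proposal is correct and follows essentially the same route as the paper: write the quadratic form of $\nabla^2 F$ as $\alpha\mu$ times the $\boldsymbol{\beta}$-weighted variance of $\nabla T_k^{\mathsf{T}}\mathbf{d}$, use Jensen for nonnegativity (hence convexity), then drop the subtracted square and apply Cauchy--Schwarz to reach $\alpha\mu\max_k\|\nabla T_k\|_2^2$. Your additions are sound but do not change the approach: the affine-composition argument for convexity, and the explicit justification of $\|\nabla T_k\|_2^2 \le \big(\sum_{(i,j)} M^k/r_{ij}\big)^2$ via nonnegativity of the entries, a step the paper uses without comment.
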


\begin{proof}
	The detailed derivation is deferred to \ref{app:proof_lemma_L_smooth}.
\end{proof}

Leveraging Lemma \ref{lem:L_smooth} and the standard Descent Lemma, we establish the sublinear convergence rate of the MF-FW algorithm. Let $n$ denote the iteration index, and let $D = \max_{\mathbf{x}, \mathbf{y} \in \mathcal{X}} \|\mathbf{x} - \mathbf{y}\|_2$ denote the compact diameter of the flow conservation polytope.

\begin{theorem}[$\mathcal{O}(1/n)$ convergence rate of MF-FW] \label{thm:fw_rate}
	The primal optimality gap of the MF-FW algorithm at any iteration $n$ satisfies:
	\begin{equation}
		F(\mathbf{x}^{(n)}) - F(\mathbf{x}^*) \le \frac{2L D^2}{n+1}, \quad \forall n \ge 0,
	\end{equation}
	where $\mathbf{x}^* = \arg\min_{\mathbf{x}\in\mathcal{X}} F(\mathbf{x})$ is the exact global optimum of the smoothed subproblem \eqref{eq:smoothed_routing}, and $L$ is the Lipschitz constant established in Lemma \ref{lem:L_smooth}.
\end{theorem}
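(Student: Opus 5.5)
The plan is to follow the standard Frank--Wolfe analysis, with one adaptation: the step size \eqref{eq:fw_step} is not the open-loop rule $2/(n+2)$ but a truncated Newton step built from the exact directional curvature. We will show that at every iteration this step does at least as well as the open-loop step, measured against an $L$-smooth quadratic upper model. The standard induction then goes through.

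\textbf{Step 1 (one-step decrease).} Fix $n$ and let $h_n = F(\mathbf{x}^{(n)}) - F(\mathbf{x}^*)$. Lemma \ref{lem:L_smooth} and the Descent Lemma give, for any $\lambda\in[0,1]$,
\begin{equation*}
F(\mathbf{x}^{(n)}+\lambda\mathbf{d}^{(n)}) \le F(\mathbf{x}^{(n)}) - \lambda\,\text{Gap}^{(n)} + \tfrac{L}{2}\lambda^2\|\mathbf{d}^{(n)}\|_2^2 .
\end{equation*}
Since $\mathbf{x}^{(n)},\mathbf{y}^{(n)}\in\mathcal{X}$, we have $\|\mathbf{d}^{(n)}\|_2\le D$.

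The step \eqref{eq:fw_step} minimizes a different quadratic, $q(\lambda)=F(\mathbf{x}^{(n)})-\lambda\,\text{Gap}^{(n)}+\tfrac12\lambda^2 C^{(n)}$, over $[0,1]$. To link the two, note that $F$ restricted to the segment is convex with second derivative $\mathbf{d}^{\mathsf T}\mathbf{H}(\mathbf{x}^{(n)}+s\mathbf{d})\mathbf{d}$. This quantity is bounded by $L\|\mathbf{d}\|^2$ at every point, and in particular $C^{(n)}\le L D^2$.

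The obstacle is that $q$ uses the curvature at $\mathbf{x}^{(n)}$ only, so it is not automatically an upper bound on $F$ along the segment. The first thing I would try is to argue via the upper model $u(\lambda)=F(\mathbf{x}^{(n)})-\lambda\,\text{Gap}^{(n)}+\tfrac{L D^2}{2}\lambda^2$. Because $C^{(n)}\le LD^2$, we have $q\le u$ pointwise, and $\lambda^*$ lies at or beyond the minimizer of $u$ on $[0,1]$.

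To transfer this to $F$ itself, I would use the safeguard that, with $F$ convex along the segment, taking a step at most the Newton step toward the minimizer cannot be worse than the shorter step. Failing that, I would interpret the paper's step as one accepted only if $F$ decreases relative to the $u$-minimizer, which is standard for ``short-step'' Frank--Wolfe. Either way the target inequality is
\begin{equation*}
h_{n+1}\le \min_{\lambda\in[0,1]}\Bigl(h_n-\lambda\,\text{Gap}^{(n)}+\tfrac{L D^2}{2}\lambda^2\Bigr).
\end{equation*}

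\textbf{Step 2 (gap dominates suboptimality).} By convexity of $F$ (Lemma \ref{lem:L_smooth}) and the linear-minimization property of $\mathbf{y}^{(n)}$, obtained from the $|\mathcal{K}|$ shortest-path problems over the decomposable polytope $\mathcal{X}$,
\begin{equation*}
\text{Gap}^{(n)}=\max_{\mathbf{y}\in\mathcal{X}}\nabla F(\mathbf{x}^{(n)})^{\mathsf T}(\mathbf{x}^{(n)}-\mathbf{y})\ge h_n .
\end{equation*}
The $\epsilon$-regularization in \eqref{eq:fw_weights} should be treated as part of $F$ (a linear term), so this inequality holds for the objective actually being minimized.

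\textbf{Step 3 (induction).} Substituting $\lambda=2/(n+2)$ into the bound from Step 1 and using Step 2 gives
\begin{equation*}
h_{n+1}\le\Bigl(1-\tfrac{2}{n+2}\Bigr)h_n+\tfrac{2LD^2}{(n+2)^2}.
\end{equation*}
For the base case, take $\lambda=1$ at $n=0$: this yields $h_1\le LD^2/2$, and likewise $h_0\le 2LD^2$ follows from $L$-smoothness together with first-order optimality at $\mathbf{x}^*$.

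For the inductive step, assume $h_n\le 2LD^2/(n+1)$. Then
\begin{equation*}
h_{n+1}\le 2LD^2\,\frac{n}{(n+2)(n+1)}+\frac{2LD^2}{(n+2)^2}\le\frac{2LD^2}{n+2},
\end{equation*}
where the last step is the elementary inequality $\frac{n}{n+1}+\frac{1}{n+2}\le 1$. This closes the induction and proves Theorem \ref{thm:fw_rate}.

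I expect the main effort to be in Step 1, namely justifying that the curvature-based step \eqref{eq:fw_step} is no worse than the open-loop step. The remaining steps are routine.
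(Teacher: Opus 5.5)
Your outline (descent lemma, $\text{Gap}^{(n)}\ge h_n$, comparison with the open-loop step $\gamma_n=2/(n+2)$, then induction) is the paper's. The step you single out is exactly where the paper's proof has a hole: it merely asserts that, because $\lambda^*$ minimizes the local quadratic and $C^{(n)}\le LD^2$, one has $F(\mathbf{x}^{(n)}+\lambda^*\mathbf{d}^{(n)})\le F(\mathbf{x}^{(n)})-\gamma h_n+\tfrac{\gamma^2}{2}LD^2$ for every $\gamma\in[0,1]$. Neither of your patches closes it.

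Your ``safeguard'' is false. For convex $\phi(\lambda)=F(\mathbf{x}^{(n)}+\lambda\mathbf{d}^{(n)})$, a Newton step built from $\phi''(0)$ overshoots the line minimizer whenever $\phi''$ grows along the segment. Then $\phi(\lambda^*)$ can exceed $\phi(\lambda_u)$, where $\lambda_u=\min\{1,\text{Gap}^{(n)}/(LD^2)\}$ minimizes your model $u$, and can even exceed $\phi(0)$.

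This is not a pathology here. We have $\phi''(s)=\alpha\mu\,\mathrm{Var}_{\beta(s)}(u_k)$, with $u_k$ the directional differences of \eqref{eq:fw_curvature} and $\beta(s)$ the softmax weights at $\mathbf{x}^{(n)}+s\mathbf{d}^{(n)}$. So $C^{(n)}=\phi''(0)$ is nearly zero when one commodity dominates the weights, which forces $\lambda^*=1$, while the weights can rebalance midway. For instance, with two commodities, $T_1-T_2=10$ at $\mathbf{x}^{(n)}$ and $u_1-u_2=-20$, you get $C^{(n)}\approx 400\alpha\mu e^{-10\mu}$ but $\phi''(1/2)=100\alpha\mu$. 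All you can then say is $\phi(\lambda^*)\le u(\lambda^*)$. For $\lambda^*=1$ this gives only $h_{n+1}\le LD^2/2$, which does not feed the induction.

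Your second option, an acceptance test, is not part of Algorithm~\ref{algo:fw}, so it proves a statement about a different method.

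The missing ingredient is a curvature constant that dominates $\phi''$ on all of $[0,1]$, not just at $s=0$. For example, Popoviciu's variance inequality gives
$\phi''(s)\le\bar C^{(n)}:=\tfrac{\alpha\mu}{4}\bigl(\max_k u_k-\min_k u_k\bigr)^2\le\alpha\mu\max_k\|\nabla T_k\|_2^2\,\|\mathbf{d}^{(n)}\|_2^2\le LD^2$,
which is still computable in $\mathcal{O}(|\mathcal{K}|)$. Use $\bar\lambda=\min\{1,\text{Gap}^{(n)}/\bar C^{(n)}\}$ in place of \eqref{eq:fw_step}, or the short step with $LD^2$, or exact line search. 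Any of these yields
$F(\mathbf{x}^{(n)}+\bar\lambda\mathbf{d}^{(n)})\le\min_{\gamma\in[0,1]}\bigl(F(\mathbf{x}^{(n)})-\gamma\,\text{Gap}^{(n)}+\tfrac{\gamma^2}{2}LD^2\bigr)$,
after which your Steps~2--3 go through. For the step exactly as in \eqref{eq:fw_step}, neither your argument nor the paper's establishes the bound.

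Separately, your claim $h_0\le 2LD^2$ is wrong. First-order optimality gives $\nabla F(\mathbf{x}^*)^{\mathsf T}(\mathbf{x}^{(0)}-\mathbf{x}^*)\ge 0$, which is the wrong sign for an upper bound. The inequality also genuinely fails: with $|\mathcal{K}|=1$ the objective is linear and independent of $\mu$, while $L=\alpha\mu C_{\max}^2\to 0$ as $\mu\to 0$. Your induction, like the paper's (whose ``base case'' actually checks $h_1\le LD^2$), delivers the bound only for $n\ge 1$.
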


\begin{proof}[Proof sketch]
	By invoking the standard descent lemma for $L$-smooth functions, the objective reduction achieved by the truncated Newton step size $\lambda^* = \min\{1, \text{Gap}^{(n)}/C^{(n)}\}$ is guaranteed to be no less than the reduction obtained via the classic diminishing step size $\gamma^{(n)} = {2}/({n+2})$. By applying the convexity of $  F(\mathbf{x}) $, we establish a recursive contraction inequality for the primal optimality gap $h_n = F(\mathbf{x}^{(n)}) - F(\mathbf{x}^*)$. The global $\mathcal{O}(1/n)$ convergence rate is then obtained by resolving this recurrence via mathematical induction. Detailed derivations are deferred to  \ref{app:proof_fw_rate}.
\end{proof}

\begin{remark}[Polytope diameter bound] \label{rem:polytope_diameter}
	While the diameter $D$ serves as a theoretical worst-case metric, it admits an  upper bound governed by the network topology. Since the decision variables $\mathbf{x}$ represent routing proportions ($x_{ij}^k \in [0, 1]$), any extreme point of the feasible polytope $\mathcal{X}$ corresponds to a set of single paths for the $\mathcal{K}$ commodities. Given that a loop-free path traverses at most $|\mathcal{V}|-1$ edges, the squared $L_2$-norm of any path vector is strictly bounded by $|\mathcal{V}|-1$. Consequently, the maximum Euclidean distance between any two feasible flow assignments is  bounded by $D \le \sqrt{2 |\mathcal{K}| (|\mathcal{V}| - 1)}$. This demonstrates that the optimization error bound in Theorem \ref{thm:fw_rate} grows sublinearly with the network size, guaranteeing stable convergence even for dense D2D topologies.
\end{remark}

Ultimately, we bridge the gap between the solution obtained by MF-FW and the original non-smooth min-max formulation to quantify the total approximation error. Let $\mathcal{J}$ denote  the non-smooth objective \eqref{obj_app} and $\mathcal{J}^*$ be the minimum value of $\mathcal{J}$. 

\begin{theorem}[Total approximation error bound] \label{thm:total_error}
	The routing solution $\mathbf{x}^{(n)}$ obtained at iteration $n$ satisfies:
	\begin{equation}
		\mathcal{J}(\mathbf{x}^{(n)}) - \mathcal{J}^* \le \underbrace{\frac{2 \alpha \mu C_{\max}^2 D^2}{n+1}}_{\text{Optimization Error}} + \underbrace{\frac{\alpha}{\mu} \ln |\mathcal{K}|}_{\text{Smoothing Error}}.
	\end{equation}
\end{theorem}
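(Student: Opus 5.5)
The plan is a standard three-term sandwich, with the physical resources $(\mathbf{l},\mathbf{p})$ held fixed. In that setting $\mathcal{J}(\mathbf{x}) = \alpha T_{\max}(\mathbf{x}) + E(\mathbf{x})$, where $E(\mathbf{x}) = \sum_{k}\sum_{(i,j)} c_{ij}^{\text{E}} x_{ij}^k$ is the linear energy term. Likewise $F(\mathbf{x}) = \alpha\tilde{T}_{\max}(\mathbf{x},\mu) + E(\mathbf{x})$. Accordingly, $\mathcal{J}^*$ is read as $\min_{\mathbf{x}\in\mathcal{X}}\mathcal{J}(\mathbf{x})$, attained at some $\bar{\mathbf{x}}$ (it exists since $\mathcal{X}$ is a compact polytope and $\mathcal{J}$ is continuous). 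Let $\mathbf{x}^*$ be the minimizer of $F$ over $\mathcal{X}$. I would then decompose
\begin{equation*}
\mathcal{J}(\mathbf{x}^{(n)}) - \mathcal{J}^* = \big[\mathcal{J}(\mathbf{x}^{(n)}) - F(\mathbf{x}^{(n)})\big] + \big[F(\mathbf{x}^{(n)}) - F(\mathbf{x}^*)\big] + \big[F(\mathbf{x}^*) - \mathcal{J}(\bar{\mathbf{x}})\big],
\end{equation*}
and bound each bracket in turn.

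\textbf{First bracket.} By the left inequality of Proposition \ref{prop:smoothing_gap}, $\alpha T_{\max}\le\alpha\tilde{T}_{\max}$. The energy terms cancel, so $\mathcal{J}(\mathbf{x}^{(n)})\le F(\mathbf{x}^{(n)})$ and the first bracket is $\le 0$.

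\textbf{Second bracket.} Theorem \ref{thm:fw_rate} bounds it by $2LD^2/(n+1)$. Lemma \ref{lem:L_smooth} gives $L\le\alpha\mu C_{\max}^2$, so the second bracket is at most $2\alpha\mu C_{\max}^2D^2/(n+1)$. This is exactly the optimization error term in the statement.

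\textbf{Third bracket.} Optimality of $\mathbf{x}^*$ for $F$ gives $F(\mathbf{x}^*)\le F(\bar{\mathbf{x}})$. The right inequality of Proposition \ref{prop:smoothing_gap}, applied at $\bar{\mathbf{x}}$, gives $F(\bar{\mathbf{x}})\le \mathcal{J}(\bar{\mathbf{x}}) + \frac{\alpha}{\mu}\ln|\mathcal{K}|$. Hence the third bracket is at most $\frac{\alpha}{\mu}\ln|\mathcal{K}|$, the smoothing error term.

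Summing the three bounds yields the claim. The main point needing care is bookkeeping rather than analysis. I must check that the routing subproblem's linear cost $c^{\text{E}}_{ij}$ reproduces the energy part of \eqref{obj_app} for fixed $(\mathbf{l},\mathbf{p})$, so that $F$ and $\mathcal{J}$ differ only in the max-versus-LSE term. Note that $c^{\text{E}}_{ij}$ carries the factor $M^k$, so it should be read per commodity, $c^{\text{E},k}_{ij}$; with that reading it matches $\sum_{(i,j)} p_{ij}\sum_k x_{ij}^kM^k/r_{ij}$. I must also make explicit that $\mathcal{J}^*$ is the minimum over the same feasible set $\mathcal{X}$ with the same fixed resources. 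If the $\epsilon$-regularization of \eqref{eq:fw_weights} is treated as part of the objective, it contributes an extra term of order $\epsilon\,\alpha\sum_k T_k$. I would state that this term is omitted because it is negligible, or that the theorem concerns the unregularized $F$.
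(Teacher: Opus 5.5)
Your proposal is correct and is essentially the paper's argument. The paper likewise combines $\mathcal{J}(\mathbf{x}^{(n)}) \le F(\mathbf{x}^{(n)})$ with the lower bound $\mathcal{J}^* \ge F(\mathbf{x}^*_F) - \frac{\alpha}{\mu}\ln|\mathcal{K}|$ (both from Proposition~\ref{prop:smoothing_gap}), then substitutes Theorem~\ref{thm:fw_rate} with $L \le \alpha\mu C_{\max}^2$. Your third bracket spells out the lower bound the paper leaves implicit, and your reading of $\mathcal{J}^*$ as the minimum over $\mathcal{X}$ with resources fixed (and of $c_{ij}^{\text{E}}$ per commodity) is exactly what makes its one-line proof valid.
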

\begin{proof}
	Let $\mathbf{x}^*_F $ denote the  optimum of formulation \eqref{eq:smoothed_routing} with the smoothed objective $ F(\mathbf{x}) $. By the approximation bound established in Proposition \ref{prop:smoothing_gap}, we have $\mathcal{J}(\mathbf{x}^{(n)}) \le F(\mathbf{x}^{(n)})$ and $\mathcal{J}^* \ge F(\mathbf{x}^*_F) - {\alpha\ln |\mathcal{K}|}/{\mu} $. Subtracting the latter from the former yields:
	\begin{equation*}
		\mathcal{J}(\mathbf{x}^{(n)}) - \mathcal{J}^* \le \big( F(\mathbf{x}^{(n)}) - F(\mathbf{x}^*_F) \big) + \frac{\alpha}{\mu} \ln |\mathcal{K}|.
	\end{equation*}
	Substituting the $\mathcal{O}(1/n)$ primal gap bound of $F(\mathbf{x}^{(n)}) - F(\mathbf{x}^*_F)$ established in Theorem \ref{thm:fw_rate} directly completes the proof.
\end{proof}
This theorem characterizes the trade-off governed by the smoothing parameter $\mu$: a larger $\mu$ suppresses the smoothing error but  inflates the Lipschitz constant $L$, decelerating the MF-FW convergence.

%

\begin{remark}[Per-iteration computational complexity of MF-FW] \label{rem:complexity}
	In each inner MF-FW iteration, the gradient evaluation is computed in  $\mathcal{O}(|\mathcal{K}| |\mathcal{E}|)$ operations. The dominant computational overhead in each inner FW iteration is computing the descent direction $\mathbf{d}^{(n)}$, which requires executing Dijkstra's algorithm for $|\mathcal{K}|$ commodities. Thus, assuming a Fibonacci heap implementation, the per-iteration complexity is bounded by $\mathcal{O}(|\mathcal{K}| (|\mathcal{E}| + |\mathcal{V}| \log |\mathcal{V}|))$, making the MF-FW algorithm exceptionally scalable for dense D2D networks.
\end{remark}

\subsubsection{Convergence of the LR-PDIPM algorithm}
Unlike the first-order methods which typically exhibit sublinear convergence, the Newton-based LR-PDIPM achieves rapid, high-precision convergence for dense networks. Establishing the  polynomial-time complexity of this algorithm requires bounding the nonlinearity introduced by the LSE smoothing and proving that the Newton steps  decrease the duality gap.

To guarantee the convergence and step-size stability of LR-PDIPM, we must establish the second-order smoothness of the smoothed objective $ F(\mathbf{x}) $. Let $R_{\max} = \max_{k, (i,j)} \left( M^k/r_{ij} \right)$ denote the maximum path delay coefficient across the network.

\begin{lemma}[Lipschitz continuous hessian] \label{lemma:lipschitz_hessian}
	The Hessian matrix $\nabla^2 F(\mathbf{x})$ is $L_H$-Lipschitz continuous over the feasible domain $\mathcal{X}$. Specifically, there exists a constant $L_H > 0$ such that:
	\begin{equation}
		\|\nabla^2 F(\mathbf{x}) - \nabla^2 F(\mathbf{y})\|_2 \le L_H \|\mathbf{x} - \mathbf{y}\|_2, \quad \forall \mathbf{x}, \mathbf{y} \in \mathcal{X},
	\end{equation}
	where the Lipschitz constant scales quadratically with the smoothing parameter, bounded by $L_H = \mathcal{O}(\alpha \mu^2 |\mathcal{E}|^{3/2} R_{\max}^3)$.
\end{lemma}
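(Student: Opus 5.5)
The plan is to exploit the structure of $F$ directly. The energy term in \eqref{obj_smoothed} is linear in $\mathbf{x}$, so $\nabla^2 F(\mathbf{x})$ is the Hessian of $\frac{\alpha}{\mu}\ln\sum_k \exp^{\mu T_k(\mathbf{x})}$ alone, namely \eqref{eq:hessian_structure}. I will write $T_k(\mathbf{x}) = \mathbf{a}_k^{\mathsf{T}}\mathbf{x}$, where $\mathbf{a}_k$ is supported on block $k$ with entries $M^k/r_{ij}$, and let $\mathbf{C} = [\mathbf{a}_1,\dots,\mathbf{a}_{|\mathcal{K}|}]$. Then
\[
\nabla^2 F(\mathbf{x}) = \alpha\mu\, \mathbf{C}\big(\mathrm{diag}(\boldsymbol{\beta}) - \boldsymbol{\beta}\boldsymbol{\beta}^{\mathsf{T}}\big)\mathbf{C}^{\mathsf{T}},
\]
with $\boldsymbol{\beta}(\mathbf{x}) = \mathrm{softmax}(\mu \mathbf{C}^{\mathsf{T}}\mathbf{x})$. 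Thus the only $\mathbf{x}$-dependence is through $\boldsymbol{\beta}$, and
\[
\|\nabla^2 F(\mathbf{x}) - \nabla^2 F(\mathbf{y})\|_2 \le \alpha\mu \|\mathbf{C}\|_2^2\, \|\Sigma(\boldsymbol{\beta}(\mathbf{x})) - \Sigma(\boldsymbol{\beta}(\mathbf{y}))\|_2,
\]
where $\Sigma(\boldsymbol{\beta}) = \mathrm{diag}(\boldsymbol{\beta}) - \boldsymbol{\beta}\boldsymbol{\beta}^{\mathsf{T}}$.

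Next I bound the two factors. For the map $\Sigma$, I use $\|\mathrm{diag}(\boldsymbol{\beta}-\boldsymbol{\beta}')\|_2 \le \|\boldsymbol{\beta}-\boldsymbol{\beta}'\|_2$. For the outer-product part, I write $\boldsymbol{\beta}\boldsymbol{\beta}^{\mathsf{T}} - \boldsymbol{\beta}'\boldsymbol{\beta}'^{\mathsf{T}} = \boldsymbol{\beta}(\boldsymbol{\beta}-\boldsymbol{\beta}')^{\mathsf{T}} + (\boldsymbol{\beta}-\boldsymbol{\beta}')\boldsymbol{\beta}'^{\mathsf{T}}$ and use $\|\boldsymbol{\beta}\|_2 \le 1$, which gives $\|\Sigma(\boldsymbol{\beta}) - \Sigma(\boldsymbol{\beta}')\|_2 \le 3\|\boldsymbol{\beta}-\boldsymbol{\beta}'\|_2$.

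For the softmax, the Jacobian of $\boldsymbol{\beta}$ with respect to $\mathbf{x}$ is $\mu\, \Sigma(\boldsymbol{\beta})\mathbf{C}^{\mathsf{T}}$, and $\|\Sigma(\boldsymbol{\beta})\|_2 \le \max_k \beta_k \le 1$ (actually $\le 1/2$). By the mean value inequality along the segment $[\mathbf{x},\mathbf{y}]$, which lies in the convex set $\mathcal{X}$, I get $\|\boldsymbol{\beta}(\mathbf{x}) - \boldsymbol{\beta}(\mathbf{y})\|_2 \le \mu\|\mathbf{C}\|_2\|\mathbf{x}-\mathbf{y}\|_2$. Combining the pieces yields
\[
L_H \le 3\alpha\mu^2\|\mathbf{C}\|_2^3,
\]
which exhibits the claimed quadratic scaling in $\mu$.

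It remains to bound $\|\mathbf{C}\|_2$. The columns $\mathbf{a}_k$ have disjoint supports, because each lives in its own commodity block. Hence $\mathbf{C}^{\mathsf{T}}\mathbf{C}$ is diagonal, and $\|\mathbf{C}\|_2 = \max_k \|\mathbf{a}_k\|_2 \le \sqrt{|\mathcal{E}|}\,R_{\max}$. This gives $\|\mathbf{C}\|_2^3 \le |\mathcal{E}|^{3/2}R_{\max}^3$, and therefore $L_H = \mathcal{O}(\alpha\mu^2|\mathcal{E}|^{3/2}R_{\max}^3)$.

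The step I expect to need the most care is the softmax Lipschitz estimate. Everything else is just algebra with operator norms. To handle it cleanly, I will either integrate the Jacobian along the segment or, equivalently, use that the third derivative of log-sum-exp is bounded. I will also take care not to lose the disjoint-support structure, since a crude Frobenius bound on $\mathbf{C}$ would introduce an extra factor of $|\mathcal{K}|$.
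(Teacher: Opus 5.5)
Your proof is correct, and it reaches the stated bound by a different route from the paper's.

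\textbf{The paper's route.} The paper differentiates the Hessian once more and writes the third-order directional derivative explicitly as $\alpha\mu^2\sum_k\beta_k(\nabla T_k-\bar{\nabla T})(\nabla T_k-\bar{\nabla T})^{\mathsf{T}}\big((\nabla T_k-\bar{\nabla T})^{\mathsf{T}}\mathbf{d}\big)$. It bounds the cubic form $\sup_{\|\mathbf{d}\|_2=1}|\nabla^3F(\mathbf{x})[\mathbf{d},\mathbf{d},\mathbf{d}]|$ by $2\alpha\mu^2\max_k\|\nabla T_k\|_2^3$ and then appeals to the mean value theorem, giving $L_H=2\alpha\mu^2|\mathcal{E}|^{3/2}R_{\max}^3$.

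\textbf{Your route.} You never form a third-order tensor. You factor the Hessian through the $|\mathcal{K}|\times|\mathcal{K}|$ matrix $\Sigma(\boldsymbol{\beta})$ and chain two first-order Lipschitz estimates: one for $\Sigma$ as a function of $\boldsymbol{\beta}$, and one for the softmax as a function of $\mathbf{x}$. This costs a constant $3$ instead of $2$ but keeps the same order.

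\textbf{What your route buys.} Every step uses only elementary matrix-norm facts. The paper's argument leaves two things unjustified. First, it passes without comment from the diagonal cubic form to the spectral norm of the matrix $\nabla^3F(\mathbf{x})[\mathbf{d}]$. Second, it does not explain the factor $2$, which comes from bounding $\max_k|c_k|\le 2\max_k\|\nabla T_k\|_2$ and $\sum_k\beta_k c_k^2\le\max_k\|\nabla T_k\|_2^2$.

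\textbf{What the paper's route buys.} Generality. Its per-commodity scalar bound $|\nabla T_k^{\mathsf{T}}\mathbf{d}|\le\|\nabla T_k\|_2$ holds for arbitrary linear $T_k$. Your factorization instead needs $\|\mathbf{C}\|_2$, and you get $\|\mathbf{C}\|_2=\max_k\|\mathbf{a}_k\|_2$ only because the commodity blocks have disjoint supports. Without that structure you would lose a factor of up to $|\mathcal{K}|^{3/2}$, which you correctly anticipated.
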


\begin{proof}[Proof sketch]
	Establishing the Lipschitz continuity of the Hessian requires bounding the spectral norm of the third-order directional derivative of the LSE function. The detailed derivation is provided in \ref{app:proof_lemma_hessian}.
\end{proof}

To preclude algorithmic stagnation, the backtracking line search yields a step size that is lower bounded by a positive constant. For any iterate within the wide neighborhood $\mathcal{N}_{-\infty}(\gamma)$, the second-order cross terms of the Newton direction are quadratically bounded: $\|\Delta \mathbf{X} \Delta \mathbf{s}\|_2^2 \le \mathcal{C} (\mu_{\text{gap}})^2$, where $\Delta \mathbf{X} = \text{diag}(\Delta \mathbf{x})$. Exploiting this bound alongside the Lipschitz continuous Hessian (Lemma \ref{lemma:lipschitz_hessian}), we establish the following invariance and convergence guarantee.

\begin{lemma}[Wide-neighborhood invariance] \label{lemma:step_size}
	The backtracking line search (Algorithm \ref{algo:pd_ipm}, Steps \ref{linesea_start}-\ref{linesea_end}) is guaranteed to terminate with a step size $\lambda \ge \lambda_{\min} > 0$, ensuring a monotonic gap reduction:
	\begin{equation}
		\mu_{\text{gap}}^{(n+1)} \le \big( 1 - \lambda_{\min}(1-\sigma_{\max}) \big) \mu_{\text{gap}}^{(n)}.
	\end{equation}
\end{lemma}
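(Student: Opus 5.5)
The plan follows the classical wide-neighborhood analysis of infeasible/feasible primal-dual path-following methods (Wright's $\mathcal{N}_{-\infty}(\gamma)$ argument). The Newton system is perturbed by the nonlinearity of $\nabla F$; this perturbation is controlled through Lemma \ref{lemma:lipschitz_hessian}.

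\textbf{Step 1: update of the complementarity products.} Fix an iterate in $\mathcal{N}_{-\infty}(\gamma)$. By the third block of the Newton system, $\mathbf{S}\Delta\mathbf{x}+\mathbf{X}\Delta\mathbf{s} = -\mathbf{r}_c = -\mathbf{X}\mathbf{S}\mathbf{1}+\sigma\mu_{\text{gap}}\mathbf{1}$. Then, componentwise,
\begin{equation*}
(x_i+\lambda\Delta x_i)(s_i+\lambda\Delta s_i) = (1-\lambda)x_is_i + \lambda\sigma\mu_{\text{gap}} + \lambda^2\Delta x_i\Delta s_i .
\end{equation*}
Summing over $i$ and dividing by $|\mathcal{K}||\mathcal{E}|$ gives
\begin{equation*}
\mu_{\text{gap}}(\lambda) = (1-\lambda(1-\sigma))\mu_{\text{gap}} + \lambda^2\,\Delta\mathbf{x}^{\mathsf{T}}\Delta\mathbf{s}/(|\mathcal{K}||\mathcal{E}|).
\end{equation*}

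\textbf{Step 2: control of the cross term.} In the linear case one has $\Delta\mathbf{x}^{\mathsf{T}}\Delta\mathbf{s}=0$. Here the dual block reads $\Delta\mathbf{s} = \mathbf{H}\Delta\mathbf{x} - \mathbf{A}^{\mathsf{T}}\Delta\mathbf{y} + (\text{residual})$. Since $\mathbf{A}\Delta\mathbf{x}=0$ on the feasible set, this gives $\Delta\mathbf{x}^{\mathsf{T}}\Delta\mathbf{s} = \Delta\mathbf{x}^{\mathsf{T}}\mathbf{H}\Delta\mathbf{x}\ge 0$, because $\mathbf{H}=\alpha\mu(\mathbf{W}-\mathbf{v}\mathbf{v}^{\mathsf{T}})\succeq 0$ by convexity (Lemma \ref{lem:L_smooth}). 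This term is bounded above by the stated estimate $\|\Delta\mathbf{X}\Delta\mathbf{s}\|_2^2\le\mathcal{C}\mu_{\text{gap}}^2$, giving $|\Delta x_i\Delta s_i|\le\sqrt{\mathcal{C}}\mu_{\text{gap}}$ and $\Delta\mathbf{x}^{\mathsf{T}}\Delta\mathbf{s}\le \sqrt{|\mathcal{K}||\mathcal{E}|\,\mathcal{C}}\,\mu_{\text{gap}}$.

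Taking the step also moves $\nabla F$, and hence $\mathbf{r}_d$, by $\nabla F(\mathbf{x}+\lambda\Delta\mathbf{x})-\nabla F(\mathbf{x})-\lambda\mathbf{H}\Delta\mathbf{x}$. By Lemma \ref{lemma:lipschitz_hessian} this is at most $\tfrac{L_H}{2}\lambda^2\|\Delta\mathbf{x}\|^2$. The quantity $\|\Delta\mathbf{x}\|$ is itself bounded by $O(\sqrt{\mu_{\text{gap}}})$ through the scaling argument behind $\mathcal{C}$. This drift can be absorbed into $\mathbf{s}$, or equivalently into the $\mathcal{O}(\lambda^2)$ term, and it only enlarges the constant. \emph{This is the step I expect to be the main obstacle.} Making $\mathcal{C}$ uniform requires bounded level sets and a uniform bound on $\|\Delta\mathbf{x}\|$. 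I would first try to obtain these from compactness of $\mathcal{X}$ and the positive-semidefiniteness of $\mathbf{H}$ in the scaled system $\mathbf{D}^{1/2}$.

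\textbf{Step 3: explicit lower bound on the step.} The neighborhood test asks for
\begin{equation*}
(1-\lambda)x_is_i+\lambda\sigma\mu_{\text{gap}}+\lambda^2\Delta x_i\Delta s_i \ge \gamma\mu_{\text{gap}}(\lambda).
\end{equation*}
Using $x_is_i\ge\gamma\mu_{\text{gap}}$ and the bounds from Step 2, it suffices that
\begin{equation*}
\lambda\sigma_{\min}(1-\gamma)\mu_{\text{gap}} \ge \lambda^2\bigl(\sqrt{\mathcal{C}}+\gamma\sqrt{|\mathcal{K}||\mathcal{E}|\mathcal{C}}\bigr)\mu_{\text{gap}}.
\end{equation*}
This holds for all $\lambda\le\bar\lambda := \sigma_{\min}(1-\gamma)/\bigl(\sqrt{\mathcal{C}}(1+\gamma\sqrt{|\mathcal{K}||\mathcal{E}|})\bigr)$. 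Positivity of $x_i+\lambda\Delta x_i$ follows because the products stay positive along the whole segment, so $\lambda_{\max}\ge\bar\lambda$ as well. The backtracking starts at $0.99\lambda_{\max}$ and contracts by $c$, so it stops at some $\lambda\ge\lambda_{\min}:=c\min(0.99\,\bar\lambda,\,\bar\lambda)>0$. In particular it terminates in finitely many trials.

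\textbf{Step 4: monotone gap reduction.} Insert this $\lambda$ into the expression for $\mu_{\text{gap}}(\lambda)$ from Step 1. The second-order term is at most $\lambda^2\sqrt{\mathcal{C}/(|\mathcal{K}||\mathcal{E}|)}\mu_{\text{gap}}$. Shrinking $\bar\lambda$ if necessary, this term is at most half of $\lambda(1-\sigma)\mu_{\text{gap}}$. Using $\sigma\le\sigma_{\max}$ then gives the stated contraction, after redefining $\lambda_{\min}$ to absorb the factor $\tfrac12$. Monotonicity in $\lambda$ of $1-\lambda(1-\sigma_{\max})$ lets the bound be written with $\lambda_{\min}$ in place of $\lambda$.
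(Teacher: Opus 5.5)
Your route is the paper's own. Its proof likewise bounds the cross term through $\mathbf{A}\Delta\mathbf{x}=\mathbf{0}$, $\Delta\mathbf{x}^{\mathsf{T}}\Delta\mathbf{s}=\Delta\mathbf{x}^{\mathsf{T}}\mathbf{H}\Delta\mathbf{x}\ge 0$ and $\|\Delta\mathbf{X}\Delta\mathbf{s}\|_2^2\le\mathcal{C}\mu_{\text{gap}}^2$. It treats the nonlinearity of $\nabla F$ as an $\mathcal{O}(\lambda^2)$ perturbation via Lemma~\ref{lemma:lipschitz_hessian}, and reads the reduction off $\mu_{\text{gap}}(\lambda)$. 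Step~1, the cross-term part of Step~2, and Step~3 are sound at a single iterate of $\mathcal{N}_{-\infty}(\gamma)$. Also, $\mathcal{C}$ needs no level-set argument: the standard wide-neighborhood estimate in Wright's book gives $\|\Delta\mathbf{X}\Delta\mathbf{s}\|_2\le 2^{-3/2}(1+1/\gamma)\,n\,\mu_{\text{gap}}$ with $n=|\mathcal{K}||\mathcal{E}|$. Two steps, however, do not go through. They are exactly where the paper's sketch also only asserts the conclusion.

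\textbf{Step 4 covers only small steps.} The accepted $\lambda$ need not be $\le\bar\lambda$. Algorithm~\ref{algo:pd_ipm} starts at $0.99\lambda_{\max}$ and accepts the first trial passing the product test, so ``shrinking $\bar\lambda$'' has no effect on it. Write $\mu_{\text{gap}}(\lambda)/\mu_{\text{gap}}=1-\lambda(1-\sigma)+a\lambda^2$ with $a=\Delta\mathbf{x}^{\mathsf{T}}\mathbf{H}\Delta\mathbf{x}/(n\mu_{\text{gap}})\ge 0$. The best available bound is $a\le\frac14(1-2\sigma+\sigma^2/\gamma)$, which exceeds $1-\sigma$ for small $\gamma$. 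So nothing excludes a gap \emph{increase} at $\lambda$ near one. The test $x_i(\lambda)s_i(\lambda)\ge\gamma\mu_{\text{gap}}(\lambda)$ cannot detect this, because it only bounds the products from below relative to their own average. (In LP $a=0$ and the problem disappears.) The stated contraction needs an extra sufficient-decrease test in the loop, e.g.\ $\mu_{\text{gap}}(\lambda)\le(1-\kappa\lambda(1-\sigma))\mu_{\text{gap}}$, as in wide-neighborhood methods for monotone LCPs. That is a change to the algorithm.

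\textbf{The drift is not controlled.} Algorithm~\ref{algo:pd_ipm} updates $\mathbf{s}\leftarrow\mathbf{s}+\lambda\Delta\mathbf{s}$, so $\mathbf{r}_d^{+}=\nabla F(\mathbf{x})+\lambda\mathbf{H}\Delta\mathbf{x}-\nabla F(\mathbf{x}+\lambda\Delta\mathbf{x})\neq\mathbf{0}$. The new point therefore leaves $\mathcal{F}^0$, and hence $\mathcal{N}_{-\infty}(\gamma)$. At the next iteration $\Delta\mathbf{x}^{\mathsf{T}}\Delta\mathbf{s}=\Delta\mathbf{x}^{\mathsf{T}}\mathbf{H}\Delta\mathbf{x}-\Delta\mathbf{x}^{\mathsf{T}}\mathbf{r}_d$ has no sign, so neither the $\mathcal{C}$-bound nor $\lambda_{\min}$ carries over.

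Absorbing $-\mathbf{r}_d^{+}$ into $\mathbf{s}$, which the algorithm does not do, perturbs each product by up to $\|\mathbf{x}^{+}\|_\infty\frac{L_H}{2}\lambda^2\|\Delta\mathbf{x}\|_2^2$. Your estimate $\|\Delta\mathbf{x}\|_2=\mathcal{O}(\sqrt{\mu_{\text{gap}}})$ does not follow from the scaling argument. That argument bounds $\|(\mathbf{X}^{-1}\mathbf{S})^{1/2}\Delta\mathbf{x}\|_2^2\le(1+1/\gamma)n\mu_{\text{gap}}$. With $x_is_i\ge\gamma\mu_{\text{gap}}$ this gives only $|\Delta x_i|\le x_i\sqrt{(1+1/\gamma)n/\gamma}=\mathcal{O}(1)$, because $x_i/s_i$ is of order $x_i^2/\mu_{\text{gap}}$. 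A $\mu_{\text{gap}}$-independent $\mathcal{O}(\lambda^2)$ perturbation set against the centering margin $\lambda\sigma_{\min}(1-\gamma)\mu_{\text{gap}}$ certifies only $\lambda=\mathcal{O}(\mu_{\text{gap}})$. That gives no uniform $\lambda_{\min}$ and no geometric rate.

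What is missing is a \emph{scaled} Lipschitz condition, e.g.\ $\|\mathbf{X}(\nabla F(\mathbf{x}+\Delta\mathbf{x})-\nabla F(\mathbf{x})-\mathbf{H}\Delta\mathbf{x})\|_1\le M\,\Delta\mathbf{x}^{\mathsf{T}}\mathbf{H}\Delta\mathbf{x}$ whenever $\|\mathbf{X}^{-1}\Delta\mathbf{x}\|_\infty\le\beta$ (or self-concordance). You also need an update of $\mathbf{s}$ that restores dual feasibility. The unscaled bound of Lemma~\ref{lemma:lipschitz_hessian} is too weak.
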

\begin{proof}[Proof sketch]
	The proof hinges on bounding the nonlinear Taylor residual of the Newton step. First, standard IPMs guarantee that the second-order cross terms are quadratically bounded by the duality gap. Second, because the Hessian is $L_H$-Lipschitz continuous (Lemma \ref{lemma:lipschitz_hessian}), the deviation from the linear KKT prediction grows quadratically with the step size (i.e., $\mathcal{O}(\lambda^2)$). Consequently, for sufficiently small $\lambda$, the linear descent of the duality gap (i.e., $\mathcal{O}(\lambda)$) dominates the quadratic penalty, ensuring that the backtracking line search terminates at a positive step size $\lambda_{\min} > 0$ before violating the wide neighborhood conditions. The detailed derivation is deferred to \ref{app:proof_step_size}. 
\end{proof}

Building upon Lemmas \ref{lemma:lipschitz_hessian} and \ref{lemma:step_size}, we summarize the convergence and computational complexity of the proposed LR-PDIPM.

\begin{theorem}[Computational complexity of LR-PDIPM] \label{thm:ipm_complexity}
	Under the strictly interior initialization $(\mathbf{x}^{(0)}, \mathbf{s}^{(0)}) > \mathbf{0}$, the LR-PDIPM converges to an optimal solution of the convex smoothed routing subproblem \eqref{eq:smoothed_routing}. To achieve an $\varepsilon$-optimal solution, the algorithm requires at most $\mathcal{O}\left(\sqrt{|\mathcal{K}||\mathcal{E}|} \ln(1/\varepsilon)\right)$ iterations. 
\end{theorem}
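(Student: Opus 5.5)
The plan is to reduce the theorem to the gap-contraction estimate of Lemma \ref{lemma:step_size}, and then show that the lower bound $\lambda_{\min}$ there can be taken of order $1/\sqrt{n_0}$, where $n_0 := |\mathcal{K}||\mathcal{E}|$ is the number of primal variables. Once this holds, unrolling the contraction gives
\begin{equation*}
\mu_{\text{gap}}^{(n)} \le \big(1 - \lambda_{\min}(1-\sigma_{\max})\big)^{n} \mu_{\text{gap}}^{(0)} .
\end{equation*}
Using $\ln(1-a)\le -a$, we get $\mu_{\text{gap}}^{(n)}\le\varepsilon$ as soon as
\begin{equation*}
n \ge \frac{\ln(\mu_{\text{gap}}^{(0)}/\varepsilon)}{\lambda_{\min}(1-\sigma_{\max})} = \mathcal{O}\big(\sqrt{n_0}\,\ln(1/\varepsilon)\big).
\end{equation*}
Here $\mu_{\text{gap}}^{(0)}$ and $\sigma_{\max}$ are treated as constants fixed by the initialization.

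For the residuals, we note that the Newton system \eqref{eq:schur}, \eqref{eq:dx_ds_recovery} is solved exactly, since the Sherman--Morrison reconstruction is an exact identity. The linearized primal residual therefore satisfies $\mathbf{r}_p^{+}=(1-\lambda)\mathbf{r}_p$, so $\|\mathbf{r}_p\|_\infty$ decreases at least as fast as $\mu_{\text{gap}}$.

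The dual residual is perturbed by the nonlinearity of $\nabla F$. We will bound it by the Taylor remainder
\begin{equation*}
\|\nabla F(\mathbf{x}+\lambda\Delta\mathbf{x})-\nabla F(\mathbf{x})-\lambda\mathbf{H}\Delta\mathbf{x}\| \le \tfrac{L_H}{2}\lambda^2\|\Delta\mathbf{x}\|^2,
\end{equation*}
which follows from Lemma \ref{lemma:lipschitz_hessian}. Next, we bound $\|\Delta\mathbf{x}\|^2$ by a constant multiple of $\mu_{\text{gap}}$ inside $\mathcal{N}_{-\infty}(\gamma)$, using $\mathbf{H}\succeq 0$ and the scaling by $(\mathbf{X}\mathbf{S})^{1/2}$. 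With this, $\|\mathbf{r}_d\|_\infty$ also contracts proportionally to $\mu_{\text{gap}}$. Hence the stopping criterion $\max(\|\mathbf{r}_p\|_\infty,\|\mathbf{r}_d\|_\infty,\mu_{\text{gap}})\le\varepsilon$ is met within the same iteration count. Convexity of \eqref{eq:smoothed_routing} then gives that the limit point satisfies the KKT conditions, so it is optimal.

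The key step is the quantitative $\lambda_{\min}=\Omega(1/\sqrt{n_0})$. We follow the standard short-step/wide-neighborhood analysis for monotone complementarity. First, because $\mathbf{H}\succeq 0$ (Lemma \ref{lem:L_smooth}), we have $\Delta\mathbf{x}^{\mathsf{T}}\Delta\mathbf{s}\ge 0$ up to the Taylor correction. This gives the inequality
\begin{equation*}
\|\Delta\mathbf{X}\Delta\mathbf{s}\|\le 2^{-3/2}\|(\mathbf{X}\mathbf{S})^{-1/2}(\sigma\mu_{\text{gap}}\mathbf{1}-\mathbf{X}\mathbf{S}\mathbf{1})\|^2 .
\end{equation*}
Inside $\mathcal{N}_{-\infty}(\gamma)$, this is at most $\mathcal{O}(n_0)\,\mu_{\text{gap}}/\gamma$; this is the constant $\mathcal{C}$ quoted before Lemma \ref{lemma:step_size}, made explicit. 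We then expand
\begin{equation*}
(x_i+\lambda\Delta x_i)(s_i+\lambda\Delta s_i)=(1-\lambda)x_is_i+\lambda\sigma\mu_{\text{gap}}+\lambda^2\Delta x_i\Delta s_i+\mathcal{O}(L_H\lambda^2\mu_{\text{gap}}).
\end{equation*}
This follows the proof sketch of Lemma \ref{lemma:step_size}. The neighborhood condition then holds for all $\lambda\le c\,\sigma_{\min}\gamma(1-\gamma)/(n_0+L_H\text{-term})$.

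\textbf{Main obstacle.} The expansion above by itself gives only $\lambda_{\min}=\Omega(1/n_0)$, i.e.\ $\mathcal{O}(n_0\ln(1/\varepsilon))$ iterations. To reach $\sqrt{n_0}$, I would use the fact that the lower bound $x_is_i\ge\gamma\mu_{\text{gap}}$ alone does not control $\|\mathbf{X}\mathbf{S}\mathbf{1}-\mu_{\text{gap}}\mathbf{1}\|_2$. I would argue via the adaptive centering rule: when $\lambda\ll 1$, $\sigma^{(n+1)}$ is pushed toward $\sigma_{\max}$, so the following iterate is recentered into a $2$-norm neighborhood $\|\mathbf{X}\mathbf{S}\mathbf{1}-\mu_{\text{gap}}\mathbf{1}\|\le\theta\mu_{\text{gap}}$. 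There the bound improves to $\|\Delta\mathbf{X}\Delta\mathbf{s}\|=\mathcal{O}(\mu_{\text{gap}})$ and the usual choice $\sigma=1-\delta/\sqrt{n_0}$ yields the $\sqrt{n_0}$ rate. This argument also has to absorb the $L_H=\mathcal{O}(\alpha\mu^2)$ nonlinearity term into the constant, which requires a self-concordance-type assumption on $\mu$ that I expect to state explicitly. If this recentering argument fails, the fallback is the weaker $\mathcal{O}(n_0\ln(1/\varepsilon))$ bound, which is standard for $\mathcal{N}_{-\infty}$.
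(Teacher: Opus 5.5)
Your reduction is the paper's: unroll the contraction of Lemma~\ref{lemma:step_size}, use $\ln(1-a)\le -a$, and conclude optimality by convexity. You correctly isolate the crux, $\lambda_{\min}(1-\sigma_{\max})=\Omega(1/\sqrt{n_0})$, which the paper settles only by citing standard path-following theory. Your diagnosis applies to that citation too. For $\mathcal{N}_{-\infty}(\gamma)$ with $\sigma\in[\sigma_{\min},\sigma_{\max}]\subset(0,1)$, the standard long-step analysis gives $\lambda=\Omega(1/n_0)$, i.e.\ $\mathcal{O}(n_0\ln(1/\varepsilon))$ iterations; the $\sqrt{n_0}$ bound in that reference is for short-step and predictor--corrector methods in a $2$-norm neighborhood.

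\textbf{Gap 1: the recentering repair fails.} The $\sqrt{n_0}$ gain would need the iterates recentered into $\|\mathbf{X}\mathbf{S}\mathbf{1}-\mu_{\text{gap}}\mathbf{1}\|_2\le\theta\mu_{\text{gap}}$ at essentially every step. Algorithm~\ref{algo:pd_ipm} has no such mechanism. The rule $\sigma^{(n+1)}=\max(\sigma_{\min},\min(\sigma_{\max},(1-\lambda)^2))$ only moves $\sigma$ toward $\sigma_{\max}$, a fixed constant below $1$. The next step is again damped, and the line search enforces only $\mathcal{N}_{-\infty}(\gamma)$. Even exact centering ($\sigma=1$) from a point of $\mathcal{N}_{-\infty}(\gamma)$ has a step-count bound proportional to the barrier excess $\sum_i\ln(\mu_{\text{gap}}/(x_is_i))$. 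That excess can be of order $n_0\ln(1/\gamma)$ there. The one-step recentering result (the Mizuno--Todd--Ye corrector) needs a full $\sigma=1$ step started inside a $2$-norm neighborhood. Moreover, the improvement $\|\Delta\mathbf{X}\Delta\mathbf{s}\|=\mathcal{O}(\mu_{\text{gap}})$ you invoke needs $1-\sigma=\mathcal{O}(1/\sqrt{n_0})$, because $\|\sigma\mu_{\text{gap}}\mathbf{1}-\mathbf{X}\mathbf{S}\mathbf{1}\|_2^2\ge n_0(1-\sigma)^2\mu_{\text{gap}}^2$. The choice $\sigma=1-\delta/\sqrt{n_0}$ is not the algorithm's. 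Forcing it through $\sigma_{\max}$ would make the factor in Lemma~\ref{lemma:step_size} equal to $1-\Theta(\lambda_{\min}/\sqrt{n_0})$, so $\lambda_{\min}=\Omega(1)$ would be needed, and every long step would still drive $\sigma$ back down to $\sigma_{\min}$. You would therefore be analysing a different method, and your fallback $\mathcal{O}(n_0\ln(1/\varepsilon))$ is strictly weaker than the statement.

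\textbf{Gap 2: the dual-residual argument.} From $\mathbf{H}\succeq 0$ and the $(\mathbf{X}\mathbf{S})^{-1/2}$ scaling you only get $\|(\mathbf{X}^{-1}\mathbf{S})^{1/2}\Delta\mathbf{x}\|_2^2\lesssim n_0\mu_{\text{gap}}/\gamma$. Unscaling costs $x_i/s_i\le x_i^2/(\gamma\mu_{\text{gap}})$, so the best this yields is $|\Delta x_i|\lesssim x_i\sqrt{n_0}/\gamma$. That is an $\mathcal{O}(1)$ bound, not $\|\Delta\mathbf{x}\|_2^2=\mathcal{O}(\mu_{\text{gap}})$. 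Hence the remainder in
\begin{equation*}
\mathbf{r}_d^{+}=(1-\lambda)\mathbf{r}_d-\big[\nabla F(\mathbf{x}+\lambda\Delta\mathbf{x})-\nabla F(\mathbf{x})-\lambda\mathbf{H}\Delta\mathbf{x}\big]
\end{equation*}
is bounded only by $\tfrac{L_H}{2}\lambda^2\|\Delta\mathbf{x}\|_2^2$, with no factor $\mu_{\text{gap}}$. Neither the proportional decay of $\|\mathbf{r}_d\|_\infty$ nor membership of the iterates in $\mathcal{F}^0$ follows. The hypothesis $(\mathbf{x}^{(0)},\mathbf{s}^{(0)})>\mathbf{0}$ does not even place the starting point in $\mathcal{F}^0$. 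Off $\mathcal{F}^0$ one has $\Delta\mathbf{x}^{\mathsf{T}}\Delta\mathbf{s}=\Delta\mathbf{x}^{\mathsf{T}}\mathbf{H}\Delta\mathbf{x}+\mathbf{r}_p^{\mathsf{T}}\Delta\mathbf{y}-\Delta\mathbf{x}^{\mathsf{T}}\mathbf{r}_d$, so the sign condition behind your cross-term inequality is also lost. A Euclidean Lipschitz bound on the Hessian is not enough here. Polynomial bounds for nonlinear monotone problems use a scaled Lipschitz or self-concordance condition relative to $\mathbf{X}$. That is an extra hypothesis on $F$, with constants depending on $\mu$ through $L_H$, as you anticipated.

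The paper's proof takes both the $\sqrt{|\mathcal{K}||\mathcal{E}|}$ step-size scaling and the invariance of $\mathcal{N}_{-\infty}(\gamma)\subset\mathcal{F}^0$ for granted, so it does not supply the missing ingredient either. What a wide-neighborhood argument of this kind naturally supports is an $\mathcal{O}(|\mathcal{K}||\mathcal{E}|\ln(1/\varepsilon))$ bound, and for nonlinear $F$ only under such an additional condition.
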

\begin{proof}[Proof sketch]
	The core logic relies on the interaction between the sequence feasibility and the monotonic gap reduction. First, Lemma \ref{lemma:step_size} ensures that the line search generates a sequence bounded within the wide neighborhood, yielding a reduction rate of $\mu^{(n+1)} \le (1-\delta)\mu^{(n)}$. Second, guided by standard interior-point path-following theories, the single-step reduction factor $\delta$ is inversely proportional to the square root of the inequality constraint dimension $ |\mathcal{K}||\mathcal{E}|$. Unrolling this reduction recursion to satisfy the $\varepsilon$-KKT tolerance yields the logarithmic iteration bound $\mathcal{O}(\sqrt{|\mathcal{K}||\mathcal{E}|} \ln(1/\varepsilon))$. The detailed derivation is deferred to \ref{app:proof_thm_complexity}. 
\end{proof}
  
\begin{remark}[Computational complexity of LR-PDIPM] \label{com_ipm}
	Since the nested SM rank-1 formula  preserves the exact algebraic structure of the Schur complement without introducing numerical approximation errors, the total arithmetic complexity is bounded by $\mathcal{O}\left(|\mathcal{K}||\mathcal{V}|^3 \sqrt{|\mathcal{K}||\mathcal{E}|} \ln(1/\varepsilon)\right)$, overcoming the prohibitive $\mathcal{O}(|\mathcal{K}|^3|\mathcal{V}|^3)$ per-iteration cost of conventional dense Hessian inversion.
\end{remark}


\subsection{Convergence of physical-layer resource allocation}
To establish a rigorous theoretical foundation, we analyze three different aspects of the proposed PDA algorithm: subproblem optimality, dual convergence, and computational complexity.

\subsubsection{Convergence of link-level subproblems}
The primal update involves a decoupled 2D subproblem $ \Phi_{ij}(l_{ij}, t_{ij}) $ (defined in \eqref{eq:lagrangian_func}) for each active link.
\begin{lemma}[Optimality of link-level subproblem] \label{lemma:cd_conv}
	The alternating coordinate descent procedure (Algorithm \ref{algo:dual_ascent}, Steps \ref{linksub_start}-\ref{linksub_end}) converges to the unique global minimizer $(l_{ij}^*, t_{ij}^*)$ of the Lagrangian subproblem $\Phi_{ij}$.
\end{lemma}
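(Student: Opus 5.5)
The plan is to treat $\Phi_{ij}$ as a smooth, strictly convex function of two variables on a box. Two-block Gauss--Seidel with exact minimization is then known to converge to the unique minimizer.

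\emph{Step 1: write out $\Phi_{ij}$ and establish strict convexity.} Substituting \eqref{eq:p_convex_func} into \eqref{eq:lagrangian_func} gives
\[
\Phi_{ij}(l,t) = \frac{N_0}{h_{ij}}\bigl((1-\alpha)t+\varphi_i\bigr)\, l\bigl(2^{m_{ij}/(lt)}-1\bigr) + \omega l + \Bigl(\sum_k\theta^k\rho^k_{ij}\Bigr)t .
\]
Put $u=l/m_{ij}$ and $v=t$. The energy term is a positive multiple of $f(u,v)=uv(2^{1/uv}-1)$. By Lemma~\ref{lemma:core_func}, $f$ is strictly jointly convex for $u,v>0$. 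The power term $\varphi_i l(2^{m/(lt)}-1)$ is the perspective of the convex function $w\mapsto 2^{m/w}-1$ evaluated at $w=lt$. It is jointly convex by Theorem~\ref{thm:convexity}, since it appears there as the constraint function. The remaining terms are linear. Hence $\Phi_{ij}$ is $C^1$, and it is strictly convex whenever $\alpha<1$. When $\alpha=1$, strict convexity has to come from the power term, and I will verify this through its Hessian. On the compact box $\mathcal{B}=[l_{\min},B]\times[t_{\min},T^{\text{QoS}}_{\max}]$, continuity gives existence of a minimizer, and strict convexity gives uniqueness of $(l^*_{ij},t^*_{ij})$.

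\emph{Step 2: each bisection returns the exact coordinate minimizer.} For fixed $t$, the map $l\mapsto\Phi_{ij}(l,t)$ is strictly convex. So $\partial_l\Phi_{ij}$ is strictly increasing, and bisection on $[l_{\min},B]$ finds either its unique root or the active endpoint, which is the projected minimizer. The same argument applies in $t$. Each sweep is therefore exact block minimization over a Cartesian product of intervals. This yields monotone descent $\Phi_{ij}(l^{(q)},t^{(q)})\le\Phi_{ij}(l^{(q-1)},t^{(q-1)})$. Because the iterates stay in the compact box $\mathcal{B}$, they have cluster points.

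\emph{Step 3: every cluster point is stationary.} Let $(\bar l,\bar t)$ be a limit along a subsequence. Descent together with the uniqueness of each coordinate minimizer lets me invoke the standard result on two-block coordinate descent (Bertsekas, Prop.~2.7.1; Grippo--Sciandrone). This shows that consecutive iterates share the same limit. By continuity of the argmin maps, which follows from strict convexity and compactness, $\bar l$ minimizes $\Phi_{ij}(\cdot,\bar t)$ and $\bar t$ minimizes $\Phi_{ij}(\bar l,\cdot)$ over their intervals. For a $C^1$ convex function on a box, coordinatewise optimality is equivalent to the variational inequality $\nabla\Phi_{ij}(\bar l,\bar t)^{\mathsf T}(z-(\bar l,\bar t))\ge0$ for all $z\in\mathcal{B}$. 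This holds because the feasible set is a product and the normal cone splits across coordinates. Hence $(\bar l,\bar t)$ is a global minimizer, so it equals $(l^*_{ij},t^*_{ij})$ by uniqueness. Since every cluster point coincides with this point, the whole sequence converges.

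\emph{Main obstacle.} The hard part is Step 1, specifically proving strict convexity of $\Phi_{ij}$ on all of $\mathcal{B}$. Strict convexity must hold in each coordinate for the bisection roots to be unique, and jointly for the limit to be unique. The case $\alpha=1$ with $\varphi_i=0$ is degenerate: $\Phi_{ij}$ is then linear and the minimizer sits at a vertex. I would handle this case separately, noting that the minimizer is then attained in finitely many sweeps. The general case rests on Lemma~\ref{lemma:core_func}, together with a Hessian computation for the power term showing positive definiteness when $\varphi_i>0$.
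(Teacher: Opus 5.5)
Your proposal is correct and follows essentially the same route as the paper's proof. Both use strict joint convexity of $\Phi_{ij}$ (energy term via Lemma~\ref{lemma:core_func}, power term convex via Theorem~\ref{thm:convexity}), exact coordinate minimization on the compact Cartesian box for monotone descent, and the standard block-coordinate-descent result that limit points are stationary and hence, by strict convexity, the unique global minimizer. Your version is slightly more careful than the paper's: it handles the endpoint case of the bisection, invokes uniqueness of block minimizers for Bertsekas' theorem, and treats $\alpha=1$, which the paper simply excludes by assuming $\alpha\in(0,1)$; your Hessian claim there holds, since $\det\nabla^2\bigl[l(2^{m/(lt)}-1)\bigr]=2s^3e^{2s}/t^2>0$ with $s=m\ln 2/(lt)$.
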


\begin{proof}[Proof sketch]
	According to \eqref{eq:lagrangian_func}, the subproblem objective $\Phi_{ij}$ is a positive linear combination of the energy consumption ${E}_{ij}$, transmission power $p_{ij}$, and linear resource/time terms. By Lemma \ref{lemma:core_func} and the proof of Theorem \ref{thm:convexity}, ${E}_{ij}$ is strictly convex and $ p_{ij} $ is convex. Given weighting factor $\alpha \in (0,1)$, the strict convexity is preserved in $\Phi_{ij}$. Over the compact Cartesian product domain $\mathcal{F}_{ij}$, the alternating minimization generates a descending sequence. The decoupled constraint structure ensures that every limit point is a stationary point, which is the unique global optimum. The detailed derivation is provided in  \ref{app:proof_cd_conv}. 
\end{proof}

\subsubsection{Convergence of PDA algorithm}
The convergence of the PDA algorithm hinges on the strict convexity of the Lagrangian function \eqref{eq:lagrangian_func} and the boundedness of the resulting dual gradients. 
At the $n$-th iteration, let $\mathbf{g}^{(n)} \triangleq [g_\omega^{(n)}, (\mathbf{g}_{\varphi}^{(n)})^{\mathsf{T}}, (\mathbf{g}_{\theta}^{(n)})^{\mathsf{T}}]^{\mathsf{T}}$ denote the gradient vector of \eqref{eq:lagrangian_func}. As formulated in the dual update rules \eqref{eq:dual_update_resources}, this vector evaluates the normalized residuals (i.e., relative violations) of the capacity and delay constraints at the current unique primal minimizer $(\mathbf{l}^{(n)}, \mathbf{t}^{(n)})$. Note that the auxiliary delay variable $T$ has been eliminated via the dual simplex constraint $\sum_{k \in \mathcal{K}} \theta^k = \alpha$.

\begin{lemma}[Bounded dual gradients] \label{lemma:bounded_g}
	The dual gradients are uniformly bounded over all iterations. That is, there exists a finite constant $G > 0$ such that $\|\mathbf{g}^{(n)}\|_2 \le G, \, \forall n \ge 0$.
\end{lemma}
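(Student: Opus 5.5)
The plan is to bound each block of $\mathbf{g}^{(n)}$ separately and then combine them, using $\|\mathbf{g}^{(n)}\|_2^2 = (g_\omega^{(n)})^2 + \|\mathbf{g}_\varphi^{(n)}\|_2^2 + \|\mathbf{g}_\theta^{(n)}\|_2^2$. The key structural fact is that, in Algorithm \ref{algo:dual_ascent}, every primal iterate lies in a compact box: the bisection searches confine $l_{ij}^{(n)} \in [l_{\min}, B]$ and $t_{ij}^{(n)} \in [t_{\min}, T_{\max}^{\text{QoS}}]$ with $l_{\min}, t_{\min} > 0$. This holds irrespective of the dual values, so every quantity appearing in the gradients is a continuous function on a compact set $\mathcal{F}_{ij} = [l_{\min},B]\times[t_{\min},T_{\max}^{\text{QoS}}]$. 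By Lemma \ref{lemma:cd_conv}, the primal update is the unique minimizer of $\Phi_{ij}$ over $\mathcal{F}_{ij}$, so it lies in this set.

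\textbf{Bandwidth block.} Since $0 < l_{ij}^{(n)} \le B$ for each of the at most $|\mathcal{E}_{act}| \le |\mathcal{E}|$ active links, we have
\begin{equation*}
|g_\omega^{(n)}| = \left|\frac{\sum_{(i,j)\in\mathcal{E}_{act}} l_{ij}^{(n)} - B}{B}\right| \le \max(1, |\mathcal{E}|-1).
\end{equation*}

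\textbf{Power block.} We use the recovered power \eqref{eq:p_convex_func}, $p_{ij}=\frac{N_0 l_{ij}}{h_{ij}}(2^{m_{ij}/(l_{ij}t_{ij})}-1)$. This is continuous on $\mathcal{F}_{ij}$ and nondecreasing as $l_{ij}t_{ij}$ decreases. It is therefore bounded by
\begin{equation*}
\bar p_{ij} = \frac{N_0 B}{h_{ij}}\left(2^{m_{ij}/(l_{\min}t_{\min})}-1\right) < \infty,
\end{equation*}
where $h_{ij} > 0$ (from finite distances) and $m_{ij} \le \sum_k M^k$. Consequently, $|g_{\varphi_i}^{(n)}| \le 1 + \sum_{j\in\mathcal{V}_{out}(i)} \bar p_{ij}/P_i^{\max}$, uniformly in $n$, and $\|\mathbf{g}_\varphi^{(n)}\|_2$ is bounded by $\sqrt{|\mathcal{V}|}$ times the largest of these constants.

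\textbf{Delay block.} Since $\rho_{ij}^k = x_{ij}^k M^k/m_{ij} \in [0,1]$ (as $x_{ij}^kM^k \le m_{ij}$), each component satisfies $|\sum_{(i,j)} \rho_{ij}^k t_{ij}^{(n)}| \le |\mathcal{E}|T_{\max}^{\text{QoS}}$. Hence $\|\mathbf{g}_\theta^{(n)}\|_2 \le \sqrt{|\mathcal{K}|}\,|\mathcal{E}|\,T_{\max}^{\text{QoS}}$. Summing the squares gives an explicit $G$ that is independent of $n$ and of the multipliers.

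\textbf{Main obstacle.} The delicate point is the power bound, because $2^{m_{ij}/(l_{ij}t_{ij})}$ blows up as $l_{ij}t_{ij}\to 0$. Boundedness must therefore come from the algorithmic safeguards $l_{\min}, t_{\min} > 0$ rather than from the convex program itself, since the dual multipliers $\omega, \varphi_i$ are unbounded a priori and could otherwise push $l_{ij}$ toward zero. I would make this dependence explicit, and remark that it is harmless as long as the box contains the true optimum (i.e.\ $l_{\min}, t_{\min}$ are chosen small enough). If the safeguards are to be avoided, a fallback is to argue from strict convexity and coercivity of $\Phi_{ij}$ along a compact sublevel set, but that would require bounded multipliers and is circular, so I would rely on the box.
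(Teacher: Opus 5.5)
Your proposal is correct and follows essentially the same route as the paper. Both arguments bound the bandwidth, power, and delay blocks of $\mathbf{g}^{(n)}$ separately using the compact box $\mathcal{F}_{ij}=[l_{\min},B]\times[t_{\min},T_{\max}^{\text{QoS}}]$ enforced by the bisection ranges, and your explicit constant $\bar p_{ij}$ plays the role of the paper's extreme-value bound $P_{\text{bound}}$. One small wording fix: $p_{ij}$ is not monotone in the product $l_{ij}t_{ij}$ alone, but your $\bar p_{ij}$ still holds directly because $l_{ij}\le B$ and $l_{ij}t_{ij}\ge l_{\min}t_{\min}$.
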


\begin{proof}[Proof sketch]
	The gradient components correspond to the normalized constraint residuals of the bandwidth, power, and delay. Because the inner iterations confine the resource variables $(l_{ij}^{(n)}, t_{ij}^{(n)})$ within a compact Cartesian product domain $\mathcal{F}_{ij} = [l_{\min}, B] \times [t_{\min}, T_{\max}^{\text{QoS}}]$, both the linear combinations and the continuous nonlinear power functions attain finite values. Dividing these finite values by their respective constraint limits ensures the relative violations remain finite. Consequently, the gradient vector is bounded. The detailed proof is provided in \ref{app:proof_bounded_g}.
\end{proof}

To establish the overall convergence of the proposed PDA algorithm, we assume the original convex formulation \eqref{eq:convex_global} satisfies Slater's condition, meaning there exists at least one strictly interior feasible point. This standard assumption guarantees strong duality.

\begin{theorem}[Global optimality of PDA] \label{thm:pda_global}
	Suppose Slater's condition holds. The dual sequence $(\omega^{(n)}, \boldsymbol{\varphi}^{(n)}, \boldsymbol{\theta}^{(n)})$ generated by the PDA algorithm converges to the optimal dual set. Consequently, the primal sequence $(\mathbf{l}^{(n)}, \mathbf{t}^{(n)})$ converges to the optimum $(\mathbf{l}^*, \mathbf{t}^*)$ of the convex resource allocation subproblem \eqref{eq:convex_global}.
\end{theorem}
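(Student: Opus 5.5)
We prove Theorem \ref{thm:pda_global} with the standard projected (sub)gradient analysis for the Lagrange dual function, followed by a primal recovery argument that uses strict convexity.

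\textbf{Setup.} Define the dual function
\[
q(\omega,\boldsymbol{\varphi},\boldsymbol{\theta})=\min_{(\mathbf{l},\mathbf{t})\in\prod\mathcal{F}_{ij}}\mathcal{L}
\]
over the closed convex dual domain
\[
\Lambda=\{\omega\ge 0,\ \boldsymbol{\varphi}\ge\mathbf{0},\ \boldsymbol{\theta}\ge\mathbf{0},\ \textstyle\sum_k\theta^k=\alpha\}.
\]
On this set the $T$-term vanishes from \eqref{eq:lagrangian_func}. The function $q$ is concave as a pointwise minimum of affine functions of the multipliers. By Lemma \ref{lemma:cd_conv}, each $\Phi_{ij}$ has a unique minimizer $(l_{ij}^{(n)},t_{ij}^{(n)})$, so Danskin's theorem gives that $q$ is differentiable. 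Its gradient is exactly the constraint-residual vector, and $\mathbf{g}^{(n)}$ is a positively, diagonally rescaled version of it, with scales $1/B$ and $1/P_i^{\max}$. We absorb this rescaling by working in the weighted norm induced by those scales; equivalently, the update is projected gradient ascent in rescaled multipliers $\tilde{\omega}=B\omega$, and so on. The projection onto $\Lambda$ is separable: a nonnegative projection for $\omega$ and $\boldsymbol{\varphi}$, and the simplex projection \eqref{eq:simplex_projection} for $\boldsymbol{\theta}$. The updates \eqref{eq:dual_update_resources} are therefore exactly $\boldsymbol{\lambda}^{(n+1)}=\mathcal{P}_\Lambda(\boldsymbol{\lambda}^{(n)}+\delta^{(n)}\mathbf{g}^{(n)})$.

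\textbf{Dual convergence.} Slater's condition gives strong duality and a nonempty, compact set of dual optima $\Lambda^*$. For any $\boldsymbol{\lambda}^*\in\Lambda^*$, nonexpansiveness of $\mathcal{P}_\Lambda$ and concavity of $q$ yield
\[
\|\boldsymbol{\lambda}^{(n+1)}-\boldsymbol{\lambda}^*\|^2\le\|\boldsymbol{\lambda}^{(n)}-\boldsymbol{\lambda}^*\|^2-2\delta^{(n)}\big(q^*-q(\boldsymbol{\lambda}^{(n)})\big)+(\delta^{(n)})^2G^2,
\]
where $G$ comes from Lemma \ref{lemma:bounded_g}. Summing this inequality gives $\min_{m\le n}(q^*-q(\boldsymbol{\lambda}^{(m)}))\to 0$ under $\sum\delta^{(n)}=\infty$. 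To upgrade this to convergence of the whole sequence to $\Lambda^*$, there are two routes:
\begin{itemize}
\item with the harmonic choice $\delta^{(n)}=\delta_0/(n+1)$, which is square-summable, we get quasi-Fejér monotonicity and invoke the standard Polyak-type argument: the distances converge, a subsequence approaches $\Lambda^*$, so the full sequence does;
\item with the $\delta_0/\sqrt{n+2}$ rule actually used in Algorithm \ref{algo:dual_ascent}, which is not square-summable, we instead use the Lipschitz continuity of $\nabla q$ to get a descent-type estimate, or else settle for convergence of ergodic averages and of the best iterate. The pocket step of Algorithm \ref{algo:dual_ascent} (Steps \ref{pda_poc_start}--\ref{pda_poc_end}) returns exactly such a best iterate.
\end{itemize}
We will note this step-size distinction explicitly.

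\textbf{Primal recovery.} Two facts drive this step.
\begin{itemize}
\item The map $\boldsymbol{\lambda}\mapsto(\mathbf{l}(\boldsymbol{\lambda}),\mathbf{t}(\boldsymbol{\lambda}))$, the unique minimizer of the separable strictly convex $\mathcal{L}(\cdot,\boldsymbol{\lambda})$ over the compact set $\prod\mathcal{F}_{ij}$, is continuous by Berge's maximum theorem. Hence $\boldsymbol{\lambda}^{(n)}\to\Lambda^*$ implies that the primal iterates approach $\{(\mathbf{l}(\boldsymbol{\lambda}^*),\mathbf{t}(\boldsymbol{\lambda}^*))\}$.
\item By strong duality and the saddle-point characterization, any primal optimum $(\mathbf{l}^*,\mathbf{t}^*)$ minimizes $\mathcal{L}(\cdot,\boldsymbol{\lambda}^*)$. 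Strict convexity then forces $(\mathbf{l}(\boldsymbol{\lambda}^*),\mathbf{t}(\boldsymbol{\lambda}^*))=(\mathbf{l}^*,\mathbf{t}^*)$ for every $\boldsymbol{\lambda}^*\in\Lambda^*$, so the limit is unique.
\end{itemize}
Complementary slackness and feasibility follow from the same saddle-point property. This is consistent with $\Delta_{\text{prim}}\to0$.

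\textbf{Main obstacle.} The delicate point is strict convexity of $\Phi_{ij}$ in both variables jointly. The energy term $E_{ij}$ is strictly convex by Lemma \ref{lemma:core_func}, but only when $\alpha<1$. When $\alpha=1$, strict convexity must be recovered from $\varphi_i p_{ij}$, which requires $\varphi_i>0$. I would first try to argue from the perspective structure. $E_{ij}$ is the perspective function $f(u,v)=uv(2^{1/uv}-1)$ composed with the product $l_{ij}t_{ij}$, and it is strictly convex on the compact box $\mathcal{F}_{ij}$ as shown in the appendix proof of Theorem \ref{thm:convexity}. If that fails at $\alpha=1$, I would restrict the theorem to $\alpha\in(0,1)$, as Lemma \ref{lemma:cd_conv} already does. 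A secondary subtlety is that the box bounds $l_{\min},t_{\min},T_{\max}^{\text{QoS}}$ must not cut off the true optimum; I would assume they are chosen loosely enough that this holds.
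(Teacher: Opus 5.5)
Your proposal is correct and takes the same route as the paper's sketch: bounded dual gradients plus diminishing steps for the dual, then uniqueness of the strictly convex Lagrangian minimizer to pin down the primal limit. It also makes the missing pieces explicit (Danskin, continuity of the argmin map, the restriction $\alpha<1$), and you are right that the $\delta_0/\sqrt{n+2}$ rule actually used in Algorithm~\ref{algo:dual_ascent} is not square-summable, as that sketch assumes. For that rule you need neither the Lipschitz-gradient detour nor a retreat to best-iterate or ergodic convergence: because Slater makes $\Lambda^*$ compact, concavity and continuity of $q$ give a uniform gap $q^*-q(\boldsymbol{\lambda})\ge\epsilon(\eta)>0$ whenever $\mathrm{dist}(\boldsymbol{\lambda},\Lambda^*)\ge\eta$, so your distance recursion with $\delta^{(n)}\to0$ and $\sum_n\delta^{(n)}=\infty$ already yields $\mathrm{dist}(\boldsymbol{\lambda}^{(n)},\Lambda^*)\to0$, which is exactly the claimed dual convergence and all your Berge-plus-compactness primal step requires (one small slip: the rescaling that turns the normalized update into plain projected ascent is $\sqrt{B}\,\omega$ and $\sqrt{P_i^{\max}}\,\varphi_i$, not $B\omega$).
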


\begin{proof}[Proof sketch]
	In the dual domain, Lemma \ref{lemma:bounded_g} guarantees that the dual gradients are uniformly bounded. According to standard gradient optimization theory, applying a square summable but not summable step size to bounded gradients ensures the asymptotic  convergence of the dual objective to its optimum. In the primal domain, because the link-level Lagrangian subproblem $\Phi_{ij}$ is strictly convex (as established in Lemma \ref{lemma:cd_conv}), its primal minimizer is uniquely determined by the given dual variables. Therefore, the convergence of the dual sequence strictly forces the primal sequence to converge to the optimum without any duality gap.
\end{proof}

\begin{remark}[Pocket mechanism and dual stability]
	Although the dual function is differentiable, the dual objective may still exhibit non-monotonic behavior due to the overshooting characteristic of predetermined step sizes. The incorporated Pocket Mechanism (Steps \ref{pda_poc_start}-\ref{pda_poc_end}, Algorithm \ref{algo:dual_ascent}) tracks and preserves the feasible primal solution with the lowest objective value. This guarantees that the algorithm invariably terminates with the highest-quality feasible solution discovered across all iterations, neutralizing dual oscillations.
\end{remark}

\begin{remark}[Computational complexity of PDA]  \label{com_pda}
	The computational complexity of the PDA algorithm is evaluated based on its parallel execution time. In the primal phase, leveraging the decoupled structure, the 2D subproblems are solved  in parallel across all active links. Thus, the  time complexity for the primal update is  $\mathcal{O}(\log(1/\varepsilon_{\text{inner}}))$, where $\varepsilon_{\text{inner}}$ is the inner tolerance. In the dual phase, computing the gradient vector requires $\mathcal{O}(|\mathcal{E}_{\text{act}}|)$ operations. Subsequently, updating the scalar multipliers $\omega$ and $\boldsymbol{\varphi}$ requires $\mathcal{O}(1)$ and $\mathcal{O}(|\mathcal{V}|)$ operations respectively, while the simplex projection for $\boldsymbol{\theta}$ takes $\mathcal{O}(|\mathcal{K}| \log |\mathcal{K}|)$ operations. Given the network is connected ($|\mathcal{E}_{\text{act}}| \ge |\mathcal{V}| -1$), the per-iteration time complexity is bounded by $\mathcal{O}(\log(1/\varepsilon_{\text{inner}}) + |\mathcal{E}_{\text{act}}| + |\mathcal{K}| \log |\mathcal{K}|)$. Given the standard $\mathcal{O}(1/\varepsilon)$ iterations of the gradient method to achieve an $\varepsilon$-optimal dual solution, the total time complexity is bounded by $\mathcal{O}\left(\frac{1}{\varepsilon} \left[ \log(1/\varepsilon_{\text{inner}}) + |\mathcal{E}_{\text{act}}| + |\mathcal{K}| \log |\mathcal{K}| \right]\right)$. This linear-logarithmic scaling validates its practical efficiency for dense D2D deployments.
\end{remark}
\subsection{Convergence analysis of overall BCD framework}

Finally, we establish the theoretical convergence of the proposed inexact BCD framework. Let  $\tilde{\mathcal{J}}$ denote the LSE smoothed  objective of the non-smooth objective function $\mathcal{J}$ defined in \eqref{obj_app}. Because the inner block updates are iteratively solved via finite-step optimization rather than exact global oracles, the strict monotonic descent of objective $\mathcal{J}$ across outer BCD iterations cannot be guaranteed. Instead, the framework exhibits a bounded convergence behavior, necessitating the Pocket Mechanism to track the best feasible solution.
\begin{theorem}[Convergence of BCD framework] \label{thm:bcd_converge}
	The sequence generated by the inexact BCD framework converges to an $\varepsilon$-neighborhood of a stationary point of the problem under the smoothed objective $\tilde{\mathcal{J}}$. 
\end{theorem}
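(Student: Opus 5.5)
We would prove Theorem~\ref{thm:bcd_converge} with an inexact two-block descent argument applied to the smoothed objective $\tilde{\mathcal{J}}(\mathbf{x},\mathbf{l},\mathbf{t})$, where the physical block is written in the time-domain variables $(\mathbf{l},\mathbf{t})$ and $\mathbf{p}$ is recovered via \eqref{eq:p_convex_func}. The first step is to make inexactness explicit. We stop the routing solver at accuracy $\varepsilon_1$: Theorem~\ref{thm:fw_rate} for MF-FW or Theorem~\ref{thm:ipm_complexity} for LR-PDIPM gives
\begin{equation*}
\tilde{\mathcal{J}}(\mathbf{x}^{(\tilde n+1)},\mathbf{l}^{(\tilde n)},\mathbf{t}^{(\tilde n)})\le \min_{\mathbf{x}\in\mathcal{X}}\tilde{\mathcal{J}}(\mathbf{x},\mathbf{l}^{(\tilde n)},\mathbf{t}^{(\tilde n)})+\varepsilon_1 .
\end{equation*}
Theorem~\ref{thm:pda_global}, combined with the pocket rule, gives the analogous bound with some $\varepsilon_2$ for the resource block under fixed $\mathbf{x}^{(\tilde n+1)}$. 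Both subproblems are convex (problem \eqref{eq:smoothed_routing}, and Theorem~\ref{thm:convexity}). Hence each block step is an $\varepsilon_i$-approximate exact block minimization. This yields the inexact descent inequality
\begin{equation*}
\tilde{\mathcal{J}}^{(\tilde n+1)}\le \tilde{\mathcal{J}}^{(\tilde n)}+\varepsilon_1+\varepsilon_2 .
\end{equation*}
The inequality holds because the previous iterate is feasible for each block subproblem.

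Second, we establish boundedness. $\tilde{\mathcal{J}}$ is bounded below by $\mathcal{J}\ge 0$ (Proposition~\ref{prop:smoothing_gap}). The iterates lie in the compact set $\mathcal{X}\times\prod_{ij}\mathcal{F}_{ij}$, since $\mathcal{X}$ is a bounded polytope (Remark~\ref{rem:polytope_diameter}) and $\mathcal{F}_{ij}=[l_{\min},B]\times[t_{\min},T^{\text{QoS}}_{\max}]$. On this set the smoothed objective is continuously differentiable, using Lemma~\ref{lem:L_smooth} in $\mathbf{x}$ and the smoothness of the perspective composite from Lemma~\ref{lemma:core_func}. It therefore has Lipschitz block gradients with some constant $L_{\text{tot}}$. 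Compactness guarantees that limit points exist.

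Third, we bound stationarity at limit points, and this is the main obstacle. Exact BCD only shows that accumulated descent forces successive differences to vanish. With errors, the objective values merely stay within a band of width $\varepsilon_1+\varepsilon_2$ around a limit. I would first try to bound the block-wise first-order residuals directly from the inexact-minimality conditions. For the $\mathbf{x}$-block, the Frank--Wolfe gap at termination is at most $\varepsilon_1$. By convexity, $\varepsilon_1$-optimality also bounds $\max_{\mathbf{y}\in\mathcal{X}}\langle-\nabla_{\mathbf{x}}\tilde{\mathcal{J}},\mathbf{y}-\mathbf{x}\rangle$ up to $\mathcal{O}(\sqrt{L\varepsilon_1}D)$. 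For the $(\mathbf{l},\mathbf{t})$-block we use the PDA stopping criteria \eqref{eq:stopping_criteria}, which give approximate KKT conditions.

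The difficulty is that the $\mathbf{x}$-residual is measured at $(\mathbf{l}^{(\tilde n)},\mathbf{t}^{(\tilde n)})$, not at the updated physical block. The cross term is controlled by $L_{\text{tot}}$ times the size of the physical update, and the outer test \eqref{eq:bcd_termination} makes that update at most $\varepsilon_{\text{out}}$. Summing the two block residuals then gives a joint stationarity measure of order $\mathcal{O}(\sqrt{\varepsilon_1}+\sqrt{\varepsilon_2}+L_{\text{tot}}\varepsilon_{\text{out}})$. By continuity of the residual map on the compact domain, every limit point, in particular the pocket solution, lies in the $\varepsilon$-neighborhood of the stationary set with this $\varepsilon$.

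As a fallback for the cross-term step, I would add a proximal term in the analysis, treating PDA's warm start as an implicit prox. This yields $\sum_{\tilde n}\|\Delta z^{(\tilde n)}\|^2\lesssim\tilde{\mathcal{J}}^{(0)}-\inf\tilde{\mathcal{J}}+N(\varepsilon_1+\varepsilon_2)$, which gives an iterate with small update and hence small residual.
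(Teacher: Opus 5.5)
Your first two steps are essentially the paper's sketch made explicit: blockwise $\varepsilon$-optimality, descent up to accumulated errors, and boundedness. However, you drop the ingredient the paper lists first, and this breaks your third step.

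The two blocks do not minimize the same function. MF-FW/LR-PDIPM minimize $\tilde{\mathcal J}(\cdot,\mathbf l,\mathbf p)$. PDA instead solves \eqref{eq:resource_primal}, i.e.\ \eqref{eq:convex_global}, whose epigraph variable $T$ encodes the hard $\max_k$. Its output therefore approximately minimizes $\mathcal J(\mathbf x^{(\tilde n+1)},\cdot)$, not $\tilde{\mathcal J}(\mathbf x^{(\tilde n+1)},\cdot)$. Via Proposition~\ref{prop:smoothing_gap}, your resource-block bound holds only with $\varepsilon_2$ replaced by $\varepsilon_2+\alpha\ln|\mathcal K|/\mu$, a term no solver tolerance removes. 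This is why the paper's sketch starts from the smoothing gap and counts it, together with the truncation errors, in the size of the neighborhood.

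Your resource-block residual then fails on either route:
\begin{itemize}
\item \emph{Via PDA's approximate KKT conditions.} These are KKT conditions of the min-max problem, with delay multipliers $\theta^k$ on the scaled simplex. By contrast, $\nabla\tilde{\mathcal J}$ weights commodity delays by the softmax weights $\alpha\beta_k$. At the (near-)ties between commodity delays that min-max optima typically exhibit, these weights differ by $\mathcal O(1)$.
\item \emph{Via a $\sqrt{L\varepsilon}$ conversion.} Near ties, the block curvature of $\tilde{\mathcal J}$ in the time coordinates is of order $\alpha\mu$ (same computation as in Lemma~\ref{lem:L_smooth}). So $\sqrt{\alpha\mu\cdot\alpha\ln|\mathcal K|/\mu}$ is $\mathcal O(1)$, independent of $\mu$ and of the tolerances.
\end{itemize}
Hence a joint stationarity measure of order $\mathcal O(\sqrt{\varepsilon_1}+\sqrt{\varepsilon_2}+L_{\text{tot}}\varepsilon_{\text{out}})$ is not available. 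Only the paper's weaker conclusion survives: descent continues until the per-step errors (truncation plus smoothing gap) dominate the achievable decrease.

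Several further steps also fail as written.
\begin{itemize}
\item \emph{Invalid block split.} $(\mathbf x,\mathbf l,\mathbf t)$ is not a valid decomposition, because $t_{ij}=m_{ij}/r_{ij}$ with $m_{ij}=\sum_k x_{ij}^kM^k$. Fixing $(\mathbf l,\mathbf t)$ while varying $\mathbf x$ changes the rates. The delay becomes $\sum_{ij}x_{ij}^kM^kt_{ij}/m_{ij}(\mathbf x)$ and the power becomes exponential in $m_{ij}(\mathbf x)$. So your first displayed inequality is not what Theorem~\ref{thm:fw_rate} gives, since the routing solvers fix $(\mathbf l,\mathbf p)$.
\item \emph{Lost compactness and Lipschitz constant.} In $(\mathbf l,\mathbf p)$ coordinates the box $\prod\mathcal F_{ij}$ and a uniform $L_{\text{tot}}$ are no longer available. $\tilde{\mathcal J}$ contains $1/r_{ij}$, which is unbounded as resources vanish, and PDA assigns no resources to pruned links. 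Your cross-term bound rests on exactly this constant.
\item \emph{The outer test may never fire.} The residual bound holds only at an outer iteration where \eqref{eq:bcd_termination} fires, and nothing shows it fires before $N_{\text{out}}$. That would need sufficient decrease, $\tilde{\mathcal J}^{(\tilde n)}-\tilde{\mathcal J}^{(\tilde n+1)}\ge c\|\Delta z^{(\tilde n)}\|^2-(\text{errors})$. The routing block cannot supply it: its Hessian $\alpha\mu(\mathbf W-\mathbf v\mathbf v^{\mathsf T})$ has rank at most $|\mathcal K|$ in dimension $|\mathcal K||\mathcal E|$, so the block is far from strongly convex.
\item \emph{The prox fallback.} A warm start is not a proximal term: it changes neither the subproblem nor its minimizer. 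So the fallback does not deliver your $\sum\|\Delta z\|^2$ bound.
\item \emph{Limit points and the pocket iterate.} Given the above, ``every limit point'' does not follow. The pocket iterate is chosen by smallest $\mathcal J$, not smallest residual, so it is not covered either.
\item \emph{PDA stopping tests.} The tests \eqref{eq:stopping_criteria} do not certify approximate KKT, because $\Delta_{\text{dual}}\to 0$ merely because $\delta^{(n)}\to0$.
\item \emph{Residual versus distance.} Turning a small residual into a small distance to the stationary set needs an error bound. Compactness gives only a non-quantitative modulus, not ``this $\varepsilon$''.
\end{itemize}
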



\begin{proof}[Proof sketch]
	The proof relies on the convergence properties of BCD framework under continuous differentiability and finite inner-loop truncation. First, the LSE approximation introduces a deterministic bounded gap $\sup |\tilde{\mathcal{J}} - \mathcal{J}| \le  \ln |\mathcal{K}|/{\mu}$ (Proposition \ref{prop:smoothing_gap}). Second, within each BCD outer iteration, the routing and resource blocks are solved to their respective $\varepsilon$-optimalities rather than exact minima.
	According to standard inexact BCD theory for non-convex smooth functions \cite{wright2015coordinate}, as long as the objective reduction achieved by each block update dominates the accumulated inner-loop truncation errors, the sequence guarantees monotonic descent. As the sequence approaches the stationary point and the gradients diminish, the truncation errors eventually lower-bound the descent, trapping the sequence within a bounded $\varepsilon$-neighborhood of the stationary point.
\end{proof}

\begin{remark}[Stabilization via Pocket Mechanism]
	While the sequence converges with respect to $\tilde{\mathcal{J}}$, the oscillations of the non-smooth objective $\mathcal{J}$ cannot be avoided due to the smoothing gap and finite inner-loop truncation. However, these are neutralized  by the Pocket Mechanism (Step \ref{bcd_pocket}, Algorithm \ref{algo:bcd_master}), which filters the non-monotonic solution ensuring that the output sequence of the objective is monotonically non-increasing.
\end{remark}


Building upon the established algorithmic analyses, the total computational complexity of the BCD framework (Algorithm \ref{algo:bcd_master}) can be obtained. 
The complexity analysis begins with a one-time initialization phase (i.e., applying the PDA over the $K$-shortest paths). Assuming a Fibonacci heap implementation, this procedure requires $\mathcal{C}_{\text{init}} = \mathcal{O}\left(|\mathcal{K}| N_{\text{path}} |\mathcal{V}| (|\mathcal{E}| + |\mathcal{V}| \log |\mathcal{V}|)\right)$ operations, where $N_{\text{path}}$ denotes the predefined maximum number of initial routing paths for each commodity.

Within the outer BCD loop, capped at a maximum of $N_{\text{out}}$ outer iterations, each iteration alternates between the routing optimization and the resource allocation. Let $\mathcal{C}_{\text{route}}$ and $\mathcal{C}_{\text{pda}}$ denote their respective operational costs. As derived in Remark \ref{rem:complexity} and Remark \ref{com_ipm}, the routing complexity yields $\mathcal{C}_{\text{route}} = \mathcal{O}\left(I_{\text{fw}}|\mathcal{K}| (|\mathcal{E}| + |\mathcal{V}| \log |\mathcal{V}|)\right) $ when utilizing the MF-FW algorithm (assuming $I_{\text{fw}}$ inner iterations), or $\mathcal{C}_{\text{route}} = \mathcal{O}\left(|\mathcal{K}||\mathcal{V}|^3 \sqrt{|\mathcal{K}||\mathcal{E}|} \ln(1/\varepsilon)\right)$ via the LR-PDIPM. Meanwhile, as established in Remark \ref{com_pda}, the operational cost for the  gradient-based resource allocation is bounded by $\mathcal{C}_{\text{pda}} =\mathcal{O}\left(\frac{1}{\varepsilon} \left[ \log(1/\varepsilon_{\text{inner}}) + |\mathcal{E}_{\text{act}}| + |\mathcal{K}| \log |\mathcal{K}| \right]\right) $.

Synthesizing these components, the overall worst-case time complexity of the proposed BCD framework is bounded by:
\begin{equation} \label{eq:total_complexity}
	\mathcal{O}\left( \mathcal{C}_{\text{init}} + N_{\text{out}} \left( \mathcal{C}_{\text{route}} + \mathcal{C}_{\text{pda}} \right) \right).
\end{equation}

\section{Numerical results and performance evaluation}
\label{sec:numerical_results}
In this section, we conduct comprehensive numerical experiments to validate the effectiveness of the proposed BCD framework. We first introduce the simulation parameters and performance metrics, followed by an evaluation of the Pareto trade-off to determine the optimal weighting coefficient $ \alpha $. Under this optimal $ \alpha $, we conduct a sequence of experiments, focusing on the resilience, scalability, and robustness. For these experiments, 50 test instances are randomly generated. The code and all topology datasets are publicly available in our repository.\footnote{https://github.com/Gthu/JRRA-for-D2D-Networks}

\paragraph{Network topology and traffic generation}  We consider a $R \times R $  m$^2 $ simulated area where $|\mathcal{V}| $ user nodes are randomly distributed within this area \cite{liu2023communication,gures2026joint}.  Two directional links are established between any node pair if their distance  satisfies $d_{i,j} \le D_{\max}$. To induce realistic multi-hop routing, the $|\mathcal{K}|$ commodities (source-destination pairs) are generated under a minimum distance constraint $d_{src,dst} \ge 0.6 R$. The traffic demands $ M^k $ are generated  following a heterogeneous bimodal distribution: $80\%$ of the streams are assigned light loads $M^k \in [0.1, 0.5]$ (e.g., periodic sensor telemetry), while the remaining $20\%$ are assigned heavy loads $M^k \in [1.0, 2.0]$ (e.g.,  video streams or control commands).
The key system and physical layer parameters are summarized in Table \ref{tab:sim_params}.

\begin{table}[htbp]
	\centering
	\small
	\caption{Simulation parameters}
	\label{tab:sim_params}
	\begin{tabular}{@{}ll@{}}
		\toprule
		\textbf{Parameter} & \textbf{Value} \\ 
		\midrule
		System radius ($ R $)  & $ 500 $ m \\
		Maximum D2D link distance ($D_{\max}$) & $200$ m \\
		Number of nodes ($|\mathcal{V}|$) & $60$ \\
		Average node degree & $ 5 \sim 7$ \\
		Number of D2D commodities ($|\mathcal{K}|$) & $20$ \\
		Commodity demand ($M^k$) & $ 0.1 \sim 2.0$ Mbits \\
		Total available bandwidth ($B$) & $100$ MHz \\
		Max transmission power per node ($P_i^{\max}$) & $23$ dBm \\
		Noise power spectral density ($N_0$) & $-174$ dBm/Hz \\
		Channel path loss model (3GPP) & $128.1 + 37.6 \log_{10}(\frac{d_{ij}}{1000})$ \\
		\bottomrule
	\end{tabular}
\end{table}

\begin{table*}[t] 
	\centering
	\small 
	\caption{Taxonomy of the evaluated algorithms}
	\label{tab:algorithm_baselines}
	\begin{tabular}{@{}l l l l l l@{}}
		\toprule
		\textbf{Algorithm} & \textbf{Path strategy} & \textbf{Routing solver} & \textbf{Resource allocation} & \textbf{Architecture} & \textbf{Initialization} \\ \midrule
		\textbf{BCD-FW} & Multi-path  & MF-FW & PDA & Joint optimization & KSP-PDA solution \\
		\textbf{BCD-FW-WARM} & Multi-path  & MF-FW & PDA & Joint optimization & GG solution \\
		\textbf{BCD-IPM} & Multi-path  & LR-PDIPM & PDA & Joint optimization & KSP-PDA solution \\ 
		\textbf{BCD-IPM-WARM}& Multi-path  & LR-PDIPM & PDA & Joint optimization & GG solution \\ 
		GG \cite{gures2026joint} & Single-path & Greedy Dijkstra & Potential Game & Decoupled optimization & N/A \\
		SP-SCA \cite{liu2022routing} & Single-path & Dijkstra & SCA, Projected gradient & Decoupled optimization & N/A \\
		SP-PDA & Single-path & Dijkstra & PDA & Decoupled optimization & N/A \\
		KSP-PDA & Multi-path  & $ K $-Shortest path & PDA & Decoupled optimization & N/A \\
		SP-SA & Single-path & Dijkstra & Equal partition & Decoupled optimization & N/A \\ \bottomrule
	\end{tabular}
\end{table*}
To evaluate the performance of our proposed BCD framework, we benchmark it against the state-of-the-art and conventional baselines. These evaluated algorithms are summarized in Table \ref{tab:algorithm_baselines}, categorized by their underlying path strategies, routing and resource allocation algorithms, optimization architectures, and initialization strategies. Notably, we introduce BCD-FW-WARM and BCD-IPM-WARM, which leverage the heuristic solution of GG as a high-quality initial point to accelerate the solution progress.

\paragraph{Performance metrics}
We record the normalized values of original objective \eqref{eq:original_obj} alongside its individual components: the  maximum transmission time and the total network energy consumption. The normalized value is computed as
\begin{equation} \label{normalized_obj}
	\alpha  \hat{T} /T_0
	 + (1-\alpha) \hat{E}/E_0.
\end{equation}
Here, $ \hat{T},\hat{E} $ denote the maximum delay and the total energy consumption defined in \eqref{eq:original_obj} respectively, and $ T_0,E_0 $ are the initial  values of these two objectives obtained via the baseline KSP-PDA.
Furthermore, we introduce the following multi-dimensional metrics to assess spatial utilization, energy efficiency, and commodity fairness:
\begin{itemize}
	\item {Active links \& multi-path count:} the total number of utilized links ($l_{ij} > 0$) and the number of commodities employing more than one routing path. 
	\item {Energy efficiency (EE):} defined as the ratio of total  delivered data  to the total energy consumed across the network, measured in Mbits/Joule:
	\begin{equation}
		\text{EE} = \frac{\sum_{k \in \mathcal{K}} M^k}{\hat{E}}.
	\end{equation}
	\item {Jain's fairness index (JFI):} evaluated based on the maximum transmission times $\hat{T}_k$ of all $\mathcal{K}$ commodities to quantify the severity of the bottleneck:
	\begin{equation}
		\text{JFI} = \frac{\left( \sum_{k \in \mathcal{K}} \hat{T}_k \right)^2}{|\mathcal{K}| \sum_{k \in \mathcal{K}} \hat{T}_k^2}.
	\end{equation}
\end{itemize}

\begin{figure}[htbp]
	\centering
	\includegraphics[width=0.85\columnwidth]{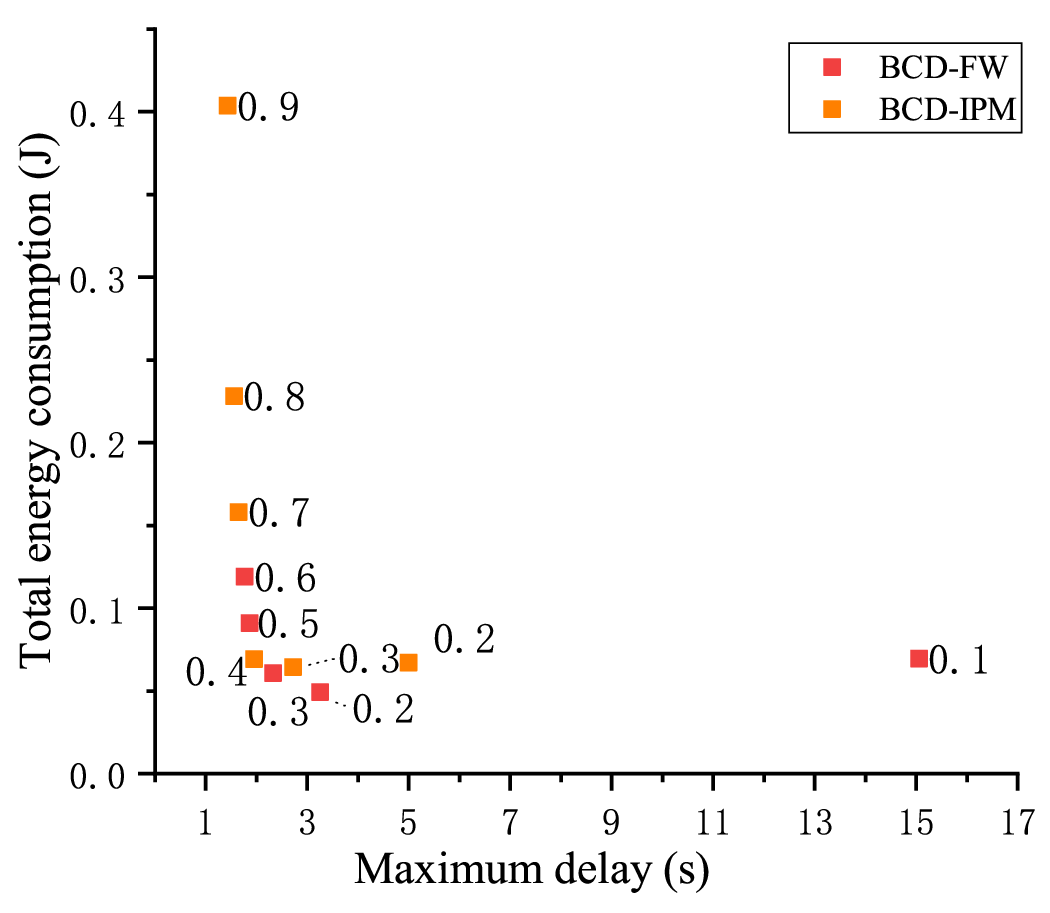}
	\caption{Pareto frontier illustrating the trade-off between maximum delay and total energy consumption.}
	\label{fig:pareto_frontier}
\end{figure}
\begin{figure*}[t]
	\centering
	
	\begin{subfigure}[b]{0.48\textwidth}
		\centering
		\includegraphics[width=\textwidth]{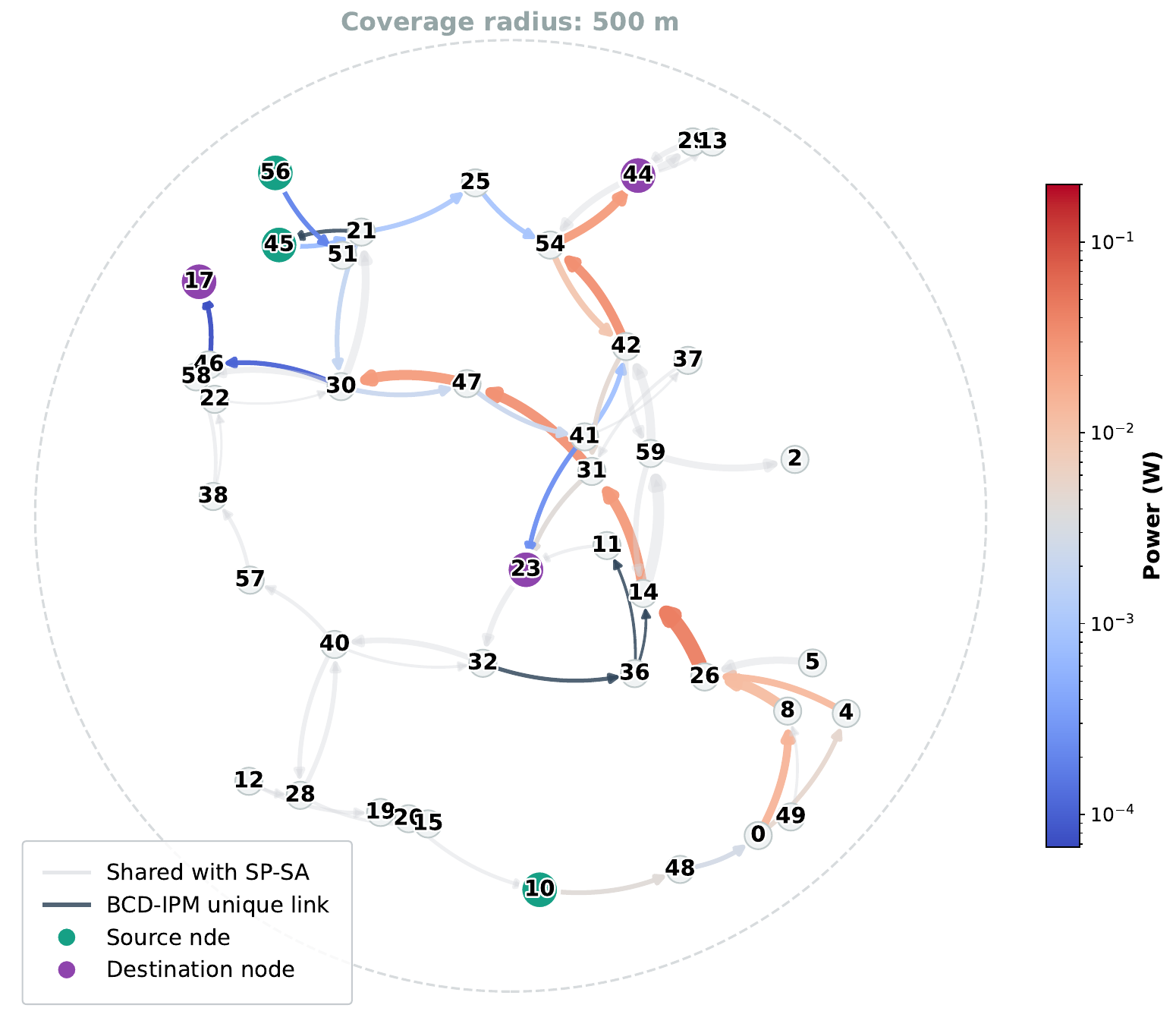}
		\caption{BCD-IPM: multi-path routing trajectories.}
		\label{fig:traj_ipm}
	\end{subfigure}
	\begin{subfigure}[b]{0.48\textwidth}
		\centering
		\includegraphics[width=\textwidth]{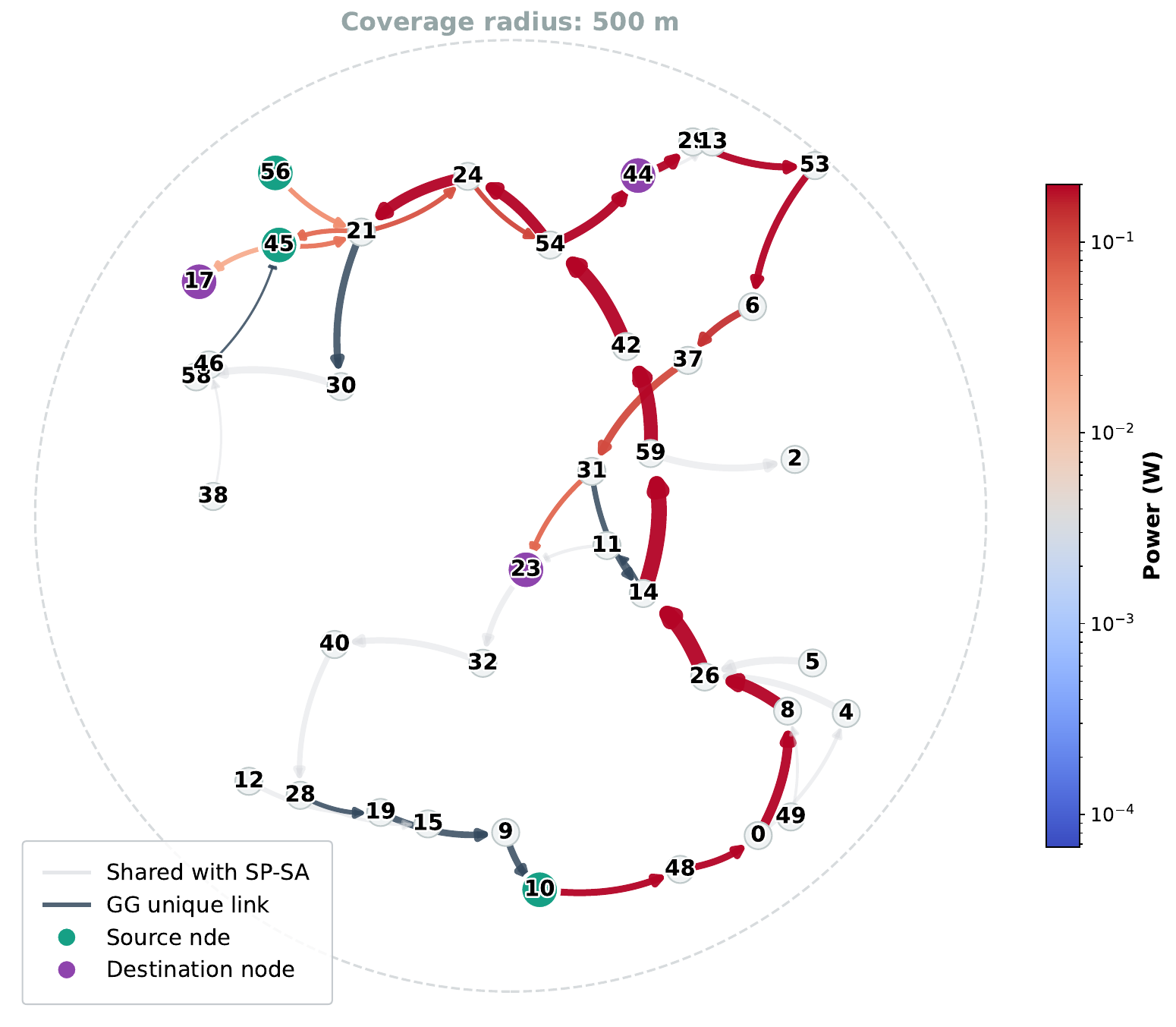}
		\caption{GG: single-path routing trajectories.}
		\label{fig:traj_gg}
	\end{subfigure}
	\caption{Microscopic visualization of routing trajectories and power allocation across Topology 10. The thickness of each link is proportional to its allocated flow volume. Furthermore, the specific links constituting the multi-path routing generated by the BCD-IPM are displayed with a thermal colormap (transitioning from deep blue to dark red), to reflect the transmission power usage.}
	\label{fig:topology10_case_study}
\end{figure*}
We first investigate the Pareto optimal frontier of the proposed BCD-FW and BCD-IPM under various $ \alpha $ values. As illustrated in Fig. \ref{fig:pareto_frontier}, the knee point  $ \alpha = 0.4  $ achieves the optimal trade-off between maximum delay and total energy consumption. Any further reduction in either delay or energy consumption beyond this point results in a severe surge in the other metric. Consequently, we set $\alpha = 0.4$ for all subsequent evaluations.

\subsection{Case study}
\label{subsec:case_study}
To demonstrate the optimization mechanism from a microscopic perspective, we extract a case study (Topology 10 in Section \ref{subsec:robustness_evaluation}). In this instance comprising $60$ nodes and $20$ commodities, baselines fall short in achieving a good trade-off between the two objectives, whereas the BCD-IPM discovers a superior routing and resource allocation solution. To expose the underlying mechanisms driving this performance gap, Fig. \ref{fig:topology10_case_study} visualizes the precise routing trajectories generated by the BCD-IPM and the GG heuristic. For clarity, the visualization isolates only the active nodes and links participating in data transmission.

\begin{table*}[t]
	\centering
	\small
	\caption{Microscopic resource allocation profile on the congested backbone trajectory (Commodity 17: Node 10 to 17)}
	\label{tab:microscopic_bottleneck}
	\renewcommand{\arraystretch}{1.2} 
	\begin{tabular}{l *{8}{>{\centering\arraybackslash}p{1.4cm}}}
		\toprule
		\multirow{3}{*}{\textbf{Link}} & \multicolumn{6}{c}{\textbf{Aggregated metrics (all commodities)}} & \multicolumn{2}{c}{\textbf{For Commodity 17}} \\
		\cmidrule(lr){2-7} \cmidrule(lr){8-9}
		& \multicolumn{2}{c}{\textbf{Routed commodities}} & \multicolumn{2}{c}{\textbf{Bandwidth (MHz)}} & \multicolumn{2}{c}{\textbf{Power (W)}} & \multicolumn{2}{c}{\textbf{Delay (s)}} \\
		\cmidrule(lr){2-3} \cmidrule(lr){4-5} \cmidrule(lr){6-7} \cmidrule(lr){8-9}
		& \textbf{GG} & \textbf{BCD-IPM} & \textbf{GG} & \textbf{BCD-IPM} & \textbf{GG} & \textbf{BCD-IPM} & \textbf{GG} & \textbf{BCD-IPM} \\
		\midrule
		$(10,48)$ & 5 & \textbf{2} & 1.909 & \textbf{0.546} & 0.050 & \textbf{0.002} & \textbf{0.031} & 0.155 \\
		$(48,0)$  & 5 & \textbf{2} & 1.909 & \textbf{0.466} & 0.067 & \textbf{0.001} & \textbf{0.023} & 0.132 \\
		$(0,8)$   & 7 & \textbf{4} & 3.687 & \textbf{1.386} & 0.046 & \textbf{0.004} & \textbf{0.017} & 0.059 \\
		$(8,26)$  & 9 & \textbf{6} & 5.887 & \textbf{1.430} & 0.033 & \textbf{0.002} & \textbf{0.010} & 0.050 \\
		$(26,14)$ & 12 & \textbf{9} & 10.113 & \textbf{3.005} & 0.033 & \textbf{0.005} & \textbf{0.007} & 0.026 \\
		\bottomrule
	\end{tabular}
\end{table*}
As depicted in Fig. \ref{fig:topology10_case_study}, we establish a microscopic visualization of the routing trajectories mapped across Topology 10. To construct a routing strategy reference, physical links shared with the SP-SA shortest-path baseline are rendered in light grey, whereas activated topological links are delineated in dark charcoal. Furthermore, the thickness of each active link is scaled in proportion to its allocated data flow volume. To deconstruct the localized power allocation, we isolate three specific commodities ($(10, 17)$, $(45, 44)$, and $(56, 23)$) that trigger multi-path routing within the BCD-IPM algorithm. The routing paths servicing these targeted commodities are displayed with a thermal colormap; the color continuously transitions from deep blue to dark red to reflect the escalating transmission power (in Watts) allocated in each link.

As illustrated in Fig. \ref{fig:traj_ipm} and  \ref{fig:traj_gg}, both algorithms  activate distinguished routing paths relative to the SP-SA baseline. The proposed BCD-IPM algorithm explores the spatial routing domain to achieve the optimal delay-energy trade-off. This algorithm dynamically activates multi-path offloading specifically when the energy-to-throughput translation rate on primary paths deteriorates. Conversely, the GG baseline aggressively selects single-path routes based on a greedy marginal-delay, evaluating the ratio between expected flows and  transmission rates without considering the physical resource penalties.

This algorithmic discrepancy is visually striking. In Fig. \ref{fig:traj_ipm}, the thermal colormap reveals that the power allocation across the vast majority of BCD-IPM links is strictly bounded below $ 0.1 $ W, dominated by cool blue and light red links. In stark contrast, Fig. \ref{fig:traj_gg} is heavily saturated with dark red links, exposing the massive energy consumption of the GG heuristic. This severe power inflation is particularly severe along the backbone route $(10 \to 48 \to 0 \to 8 \to 26 \to 14 \to 59 \to 42 \to 54 \to 24 \to 21 \to 45 \to 17)$ for commodity~$17$.

To deconstruct this phenomenon, Table \ref{tab:microscopic_bottleneck} records the microscopic resource allocation profile along this specific backbone route. The empirical data exposes a severe resource over-allocation within the GG baseline. For instance, across the link $(48,0)$, GG forces an aggregation of $5$ distinct commodities. To push this massive load through the bottleneck and maintain a superficially low link delay of $0.023$ s for Commodity $ 17 $, GG inflates this link's transmission power and bandwidth to $0.067$ W and $1.909$ MHz, respectively.
In stark contrast, BCD-IPM overcomes this resource over-allocation via load balancing. It traverses Commodity $ 17 $ through different paths in favor of underutilized topological regions (such as nodes $ 31 $ and $ 47 $), bypassing the heavily congested nodes (such as nodes $ 59, 42, $ and $ 54 $) . On this critical link $(48,0)$, BCD-IPM slashes the required transmission power to a mere $ 0.001 $ W by limiting the local aggregation to only $ 2 $ commodities. 

Scaling this microscopic observation to the network level, to service all $20$ commodities, the GG baseline consumes all the $100$ MHz of total network bandwidth and $1.679$ W of accumulated transmission power. Conversely, the BCD-IPM algorithm utilizes merely $45.821$ MHz of bandwidth and $0.136$ W of total power to service these commodities. This constitutes an astonishing {$ 12.3 $-fold reduction in total transmission power} (an approximately $91.9\%$ decrease) and a {$54.2\%$ saving in bandwidth consumption}. While the BCD-IPM yields a  higher maximum end-to-end delay ($3.84$ s versus the $1.78$ s achieved by GG), it prevents the severe network-wide resource depletion.

\begin{figure*}[t]
	\centering
	
	\begin{subfigure}[b]{0.32\textwidth}
		\centering
		\includegraphics[width=\textwidth]{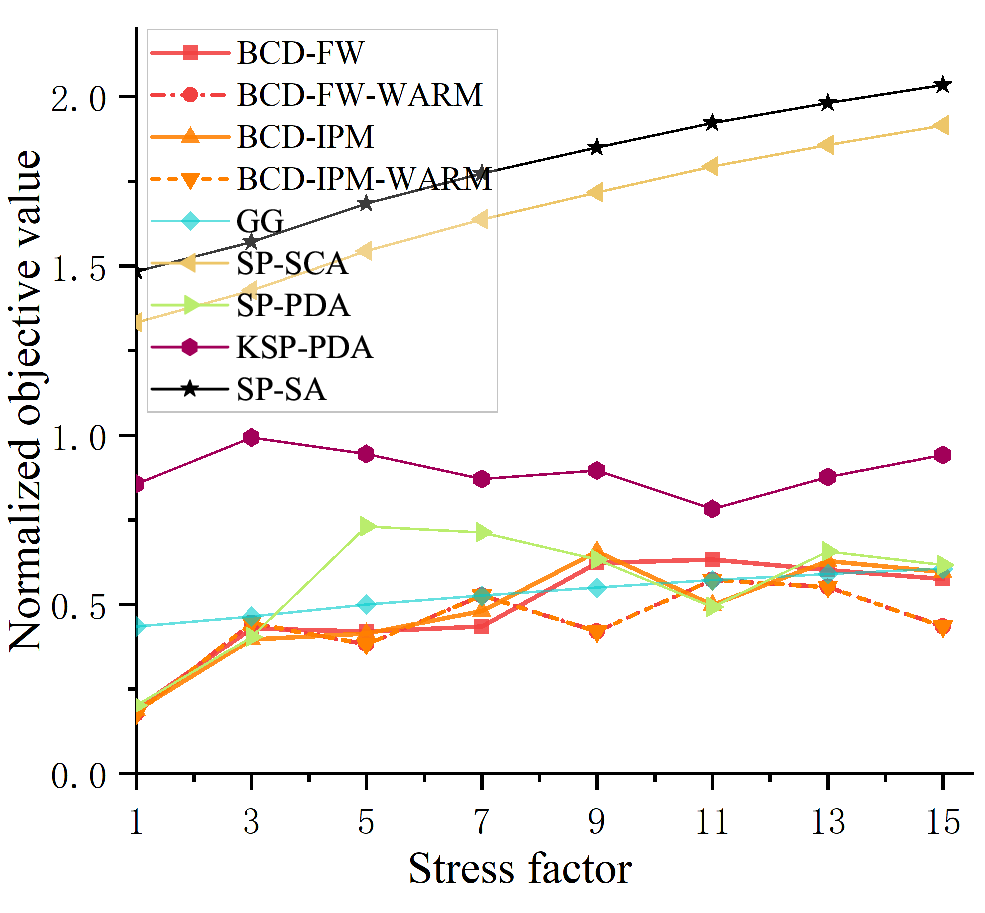}
		\caption{Normalized objective value in \eqref{normalized_obj}}
		\label{fig:obj}
	\end{subfigure}
	\hspace{0.00001\textwidth}
	\begin{subfigure}[b]{0.32\textwidth}
		\centering
		\includegraphics[width=\textwidth]{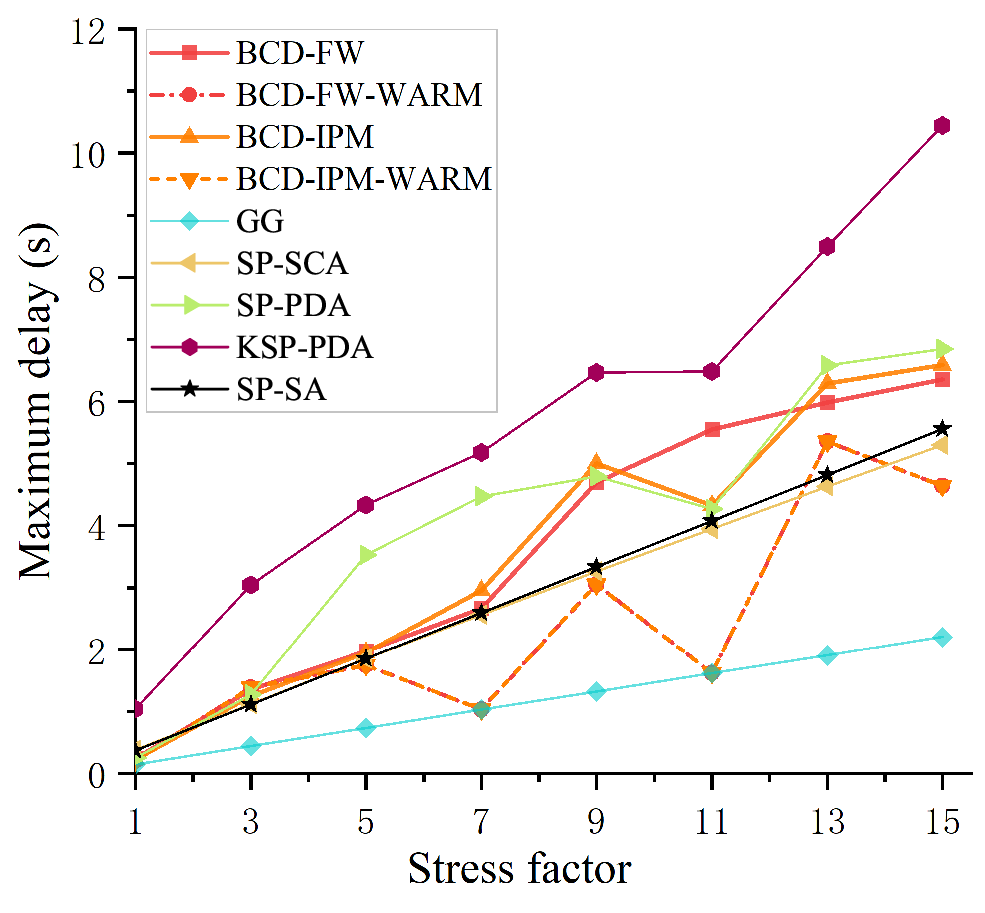}
		\caption{Maximum delay}
		\label{fig:delay_resilience}
	\end{subfigure}
	\hspace{0.00001\textwidth}
	\begin{subfigure}[b]{0.32\textwidth}
		\centering
		\includegraphics[width=\textwidth]{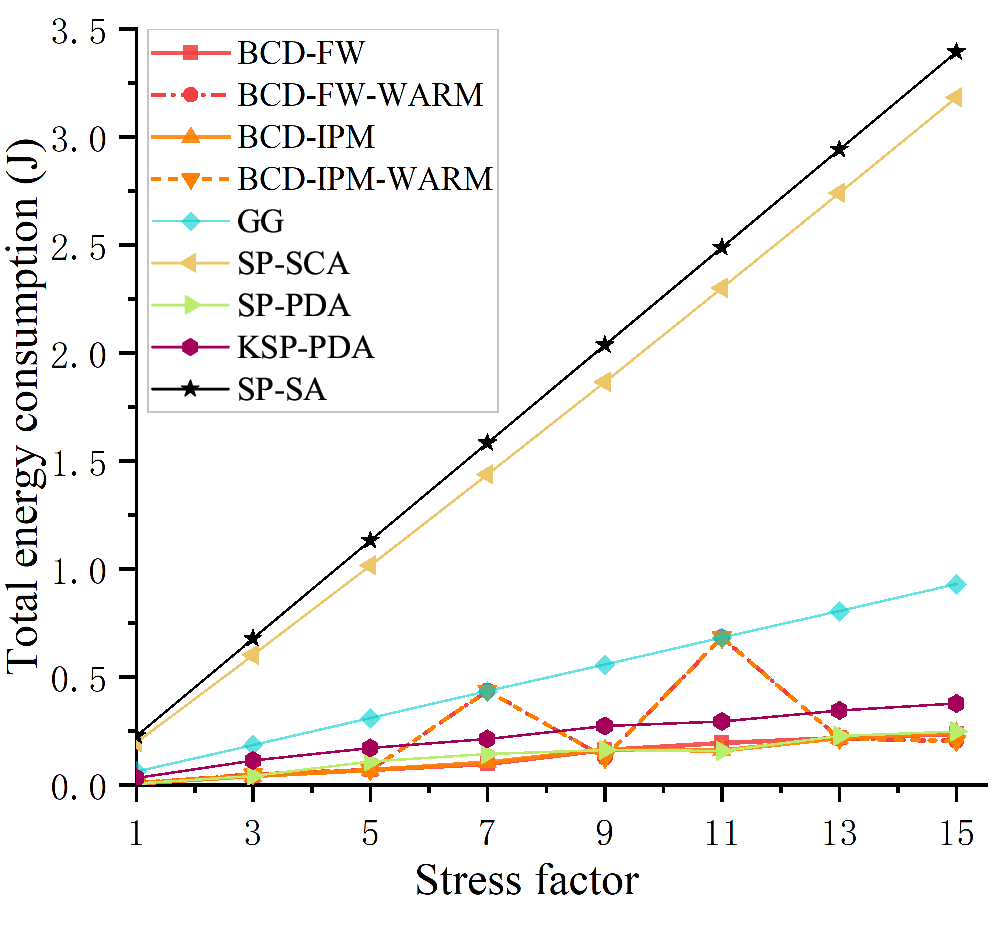}
		\caption{Total energy consumption}
		\label{fig:energy}
	\end{subfigure}
	
	\vspace{1.5em} 
	
	\begin{subfigure}[b]{0.32\textwidth}
		\centering
		\includegraphics[width=\textwidth]{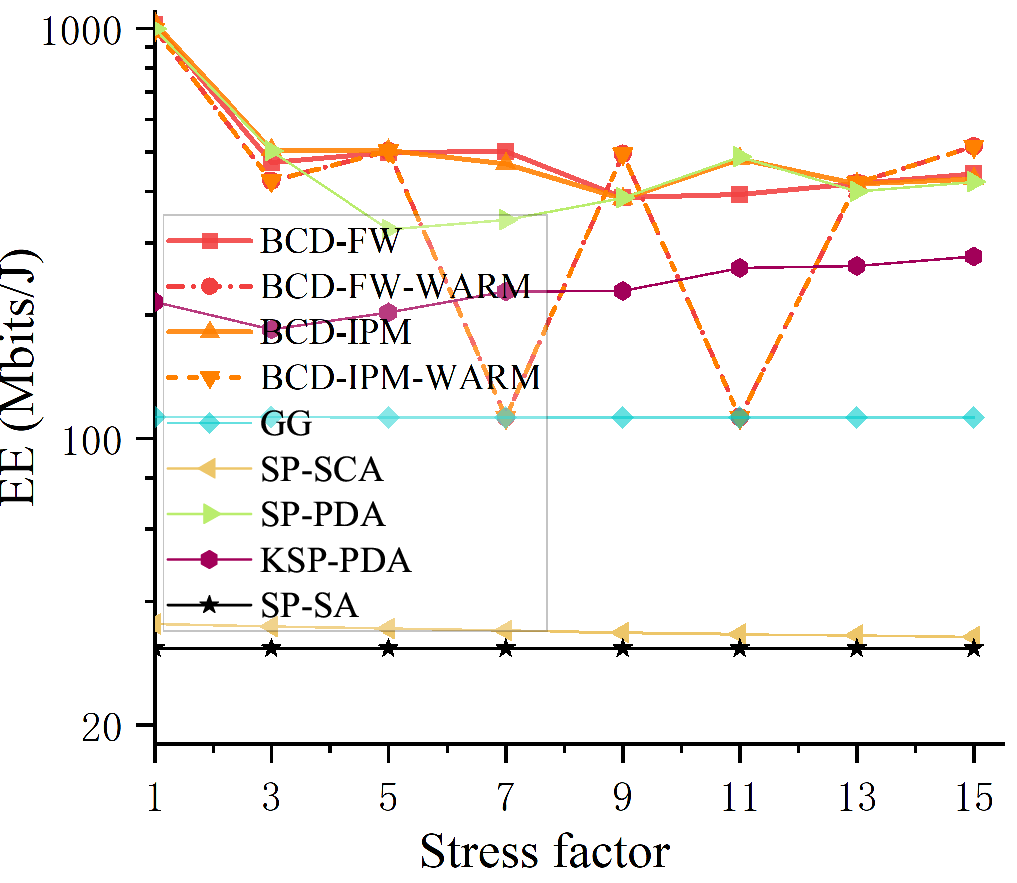}
		\caption{Energy efficiency (EE)}
		\label{fig:EE}
	\end{subfigure}
	\hspace{0.00001\textwidth}
	\begin{subfigure}[b]{0.32\textwidth}
		\centering
		\includegraphics[width=\textwidth]{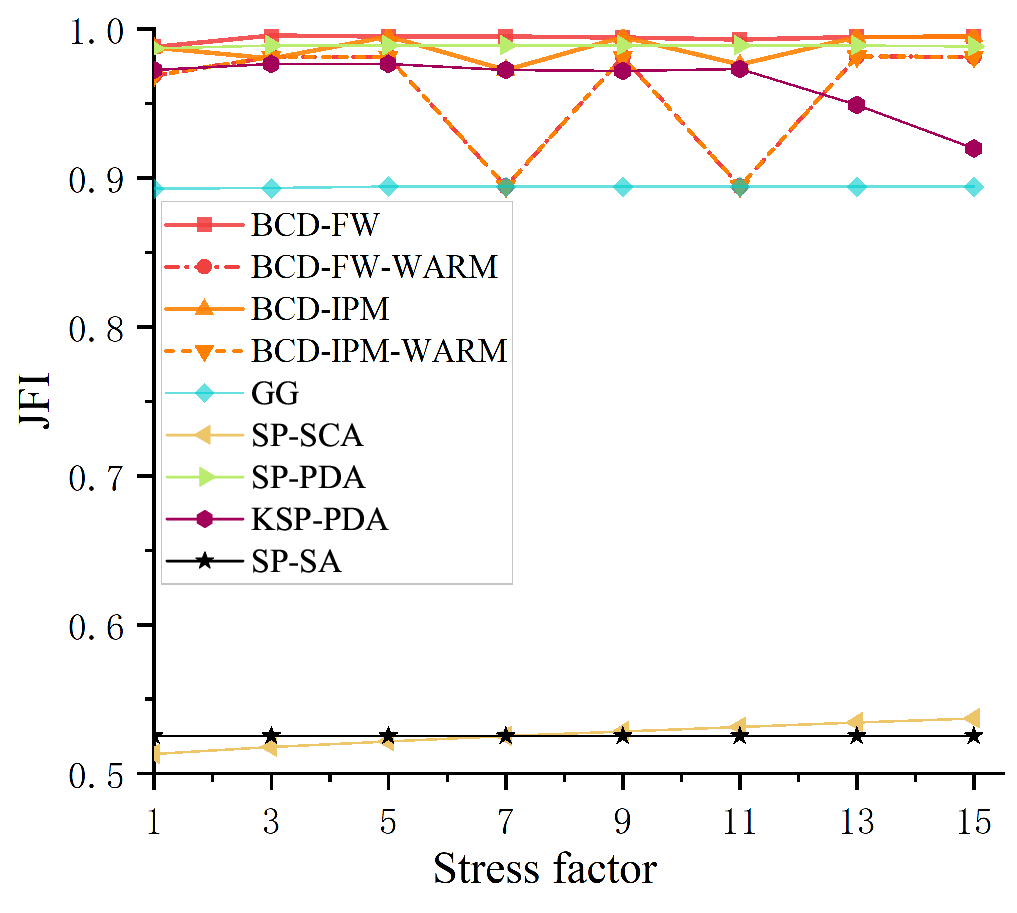}
		\caption{Jain's fairness index (JFI)}
		\label{fig:JFI}
	\end{subfigure}
	\hspace{0.00001\textwidth}
	\begin{subfigure}[b]{0.32\textwidth}
		\centering
		\includegraphics[width=\textwidth]{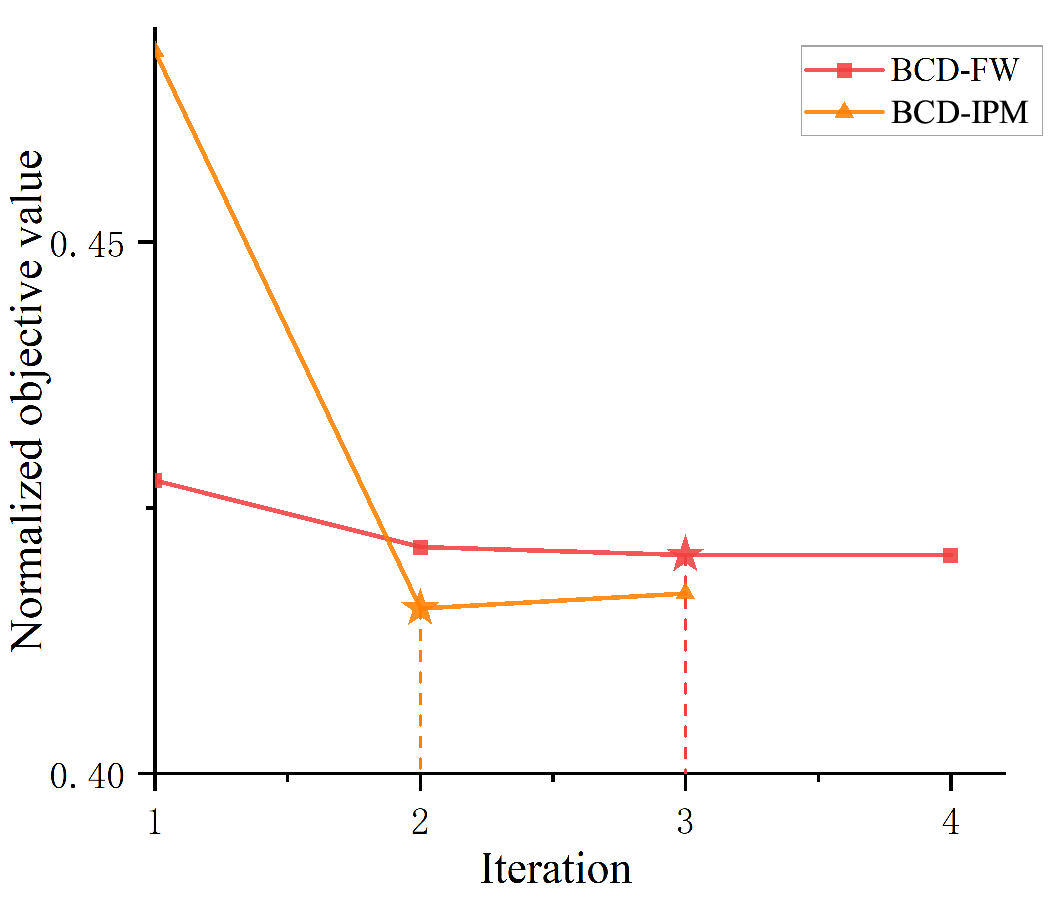}
		\caption{Convergence trajectory under $ \xi = 5 $}
		\label{fig:convergence}
	\end{subfigure}
	
	\caption{Comprehensive performance evaluation of the proposed four BCD algorithms and baselines under varying stress factors. }
	\label{fig:stress_evaluation_all}
\end{figure*}

\subsection{Algorithmic resilience against traffic stress}
\label{subsec:stress_evaluation}


We evaluate the algorithmic resilience by varying the stress factor $\xi \in [1, 15]$ on the traffic demand $ M^k $.
As illustrated in Fig. \ref{fig:obj}, the proposed four BCD algorithms consistently dominates the optimal performance frontier. Under low-to-moderate traffic loads ($\xi \le 7$), the BCD-FW and BCD-IPM outperform the baseline heuristics, maintaining minimized normalized objective values as defined in \eqref{normalized_obj}. As the network stress factor intensifies ($\xi \ge 9$), these algorithms experience slight inflation, caused by navigating a severely congested feasible region. Remarkably, the warm-started algorithms (BCD-FW-WARM and BCD-IPM-WARM) neutralize this degradation, securing superior minima in most cases. This stability stems from the synergy with a high-quality initial solution derived from the GG heuristic. Conversely, while the GG heuristic exhibits relatively stable behavior under extreme stress ($\xi \ge 9$), it operates on a sub-optimal baseline. Under low-to-moderate conditions ($\xi \le 7$), GG consistently yields inferior objective values compared to the proposed BCD framework.

As the stress factor increases ($\xi \ge 9$), an intriguing phenomenon emerges. Although the warm-started algorithms stabilize the objective trajectory by leveraging the priors of the GG heuristic, they are occasionally susceptible to the local optimum trap. Specifically, at $\xi = 11$, both BCD-FW-WARM and BCD-IPM-WARM converge to an objective value of $0.573$, identical to the GG baseline. This indicates that the GG initialization anchores these two algorithms within a local minimum. In contrast, the proposed BCD-IPM and the baseline SP-PDA discover superior solutions, yielding optimal objective values of $0.498$ and $0.492$, respectively.
Despite these localized anomalies, the SP-PDA and KSP-PDA heuristics occupy the second performance tier in the normalized objective. This confirms that the optimal PDA resource allocation remains effective at mitigating congestion and compressing the energy consuption, even when constrained to single-path routing geometries. 
The SP-SCA and SP-SA baselines consistently collapse into the lowest performance tier, exhibiting a massive optimality gap. Ultimately, this comprehensive resilience evaluation proves that the superiority of the proposed BCD framework stems from the synergy between the multi-path routing and optimal PDA resource allocation.

Fig. \ref{fig:delay_resilience} and  \ref{fig:energy} illustrate the individual objective performance for the evaluated algorithms. The  proposed BCD-FW and BCD-IPM dominate in total energy consumption. While these two algorithms exhibit the second performance tier regarding maximum delay, they consistently maintain a bounded gap between $1.55\times$ and  $3.78\times$ relative to the GG baseline.  To achieve its delay advantage, the GG heuristic consumes $3.41\times$ to $9.14\times$ more energy compared to the  BCD-FW and BCD-IPM. 
By anchoring their initialization with the GG baseline, the warm-started BCD-FW-WARM and BCD-IPM-WARM achieve highly competitive maximum delays.   They consistently guarantee a worst-case trade-off (e.g. $\xi = 7,11$) bounded by the GG heuristic.
The SP-PDA and KSP-PDA consistently secure the   minimum in total energy consumption.

In contrast, the SP-SCA and SP-SA baselines exhibit massive energy waste. While the failure of SP-SA is driven by its blind, static equal-partitioning of resources, the behavior of SP-SCA  exposes the  fragility of SCA strategy in heavily congested, non-convex landscapes. The proximal gradient descent mechanism within SP-SCA struggles with compounding approximation errors and poor local optima, ultimately consuming immense energy (peaking at $3.183$ J at $\xi = 15$). This proves that without the exact gradient-based resource optimization inherent to the BCD framework, the network loses its ability to efficiently translate consumed energy into data transmission.

As illustrated in Fig. \ref{fig:EE}, an evaluation of EE across varying network stress unveils the resource-utilization characteristics of the evaluated algorithms. The proposed BCD-FW and BCD-IPM algorithms consistently dominate this metric, maintaining an exceptionally high EE (bounded between a minimum of $383.73$ and a maximum of $1028.95$) across the entire stress spectrum. Following closely are the warm-started BCD-FW-WARM and BCD-IPM-WARM, which exhibit highly competitive EE profiles except at $\xi = 7$ and $\xi = 11$, where they degenerate into the initial GG solutions. This profound superiority demonstrates that the exact multi-path routing strategy does not blindly flood the network; instead, it maximizes the throughput-to-energy ratio by allocating fractional flows only to the paths with superior transmission rates.

An insightful phenomenon emerges among the baseline algorithms. The single-path SP-PDA algorithm exhibits remarkable energy efficiency (ranging from $322.80$ to $998.04$), significantly outperforming the multi-path KSP-PDA (which stagnates between $184.15$ and $277.52$). This discrepancy exposes the fatal flaw of heuristic multi-path routing: by partitioning traffic across all the $ K $-shortest paths, KSP-PDA inevitably forces flows through secondary links with severe congestion. This flaw penalizes its energy efficiency. In contrast, SP-PDA restricts transmission to the shortest path with the highest channel gain ($1/h_{ij}$) and optimizes resource allocation via PDA, thereby minimizing energy waste. 
Finally, the conventional heuristics experience a complete EE collapse. The GG remains trapped at EE of approximately $112.50$ regardless of load, further illustrating its strong reliance on aggressive power and bandwidth consumption. The SP-SCA and SP-SA  yield the lowest EE values of approximately $35.26$ and $30.82$, respectively. This validates previous analyses: while SP-SA is restricted by its equal-partitioning of resources, SP-SCA  struggles with compounding approximation errors and poor local optima.  Both baselines exhibit immense energy waste.

As illustrated in Fig. \ref{fig:JFI}, the JFI metric provides a microscopic view into the detailed delay distribution and service equity across all  $ |\mathcal{K}| $ commodities. The proposed BCD algorithms consistently achieve near-perfect fairness (JFI $> 0.894$), with the  BCD-FW and BCD-IPM peaking at $0.995$. This demonstrates that the routing subproblem model \eqref{eq:smoothed_routing} does not merely optimize the maximum delay; it guarantees the delay equalization for each commodity.
An intriguing phenomenon emerges within the SP-PDA and KSP-PDA baselines, which initially appear to occupy the top performance tier. While the optimal PDA resource allocation enforces delay equalization across commodities, this fairness is achieved at the severe expense of the inflated delays previously observed in Fig. \ref{fig:delay_resilience}. 
In the subsequent tiers, the greedy nature of the GG heuristic becomes evident. Stagnating at a JFI of approximately $0.894$ under high stress ($\xi \ge 5$), GG exhibits significant delay fluctuations. It prioritizes certain routes while starving others due to its greedy path selection based on the marginal-delay. Finally, the SP-SCA and SP-SA baselines suffer from severe unfairness, achieving similar JFI values bounded between $0.513$ and $0.537$. This massive disparity in end-to-end delay can be attributed respectively to compounding approximation errors and the equal partition strategy in single-path routing.

As illustrated in Fig. \ref{fig:convergence}, the convergence trajectories of the proposed BCD-FW and BCD-IPM algorithms are evaluated under a medium congestion scenario ($\xi=5$). Remarkably, both algorithms exhibit ultra-fast convergence, stabilizing at their respective optima within merely $4$ outer iterations.   Specifically, as denoted by the star markers, the optimal objective values are secured at the $3$rd and $2$nd iterations for BCD-FW and BCD-IPM, respectively.
A deeper inspection of the convergence trajectories reveals the theoretical distinction between the first-order and second-order algorithms. Initially, BCD-IPM starts at a higher normalized objective value ($0.4679$) compared to BCD-FW ($0.4276$). This behavior is expected: unlike FW, which operates on the extreme vertices of the feasible polytope, IPM strictly enforces interior feasibility via barrier penalties. However, empowered by the exact second-order Hessian information, BCD-IPM captures the exact geometric curvature of the objective.  BCD-IPM ultimately discovers a superior high-quality solution ($0.4155$ versus $0.4206$ of BCD-FW). This validates the advantage of exploiting Hessian-guided search directions in such highly coupled non-linear problems.

It is crucial to evaluate the approximation gap between the original objective \eqref{eq:original_obj} and its upper bound surrogate \eqref{obj_app}. Remarkably, across all $50$ test instances, encompassing the Pareto frontier, resilience, scalability, and robustness scenarios, the approximation gap of the proposed BCD framework is predominantly zero. Specifically, the proposed BCD-FW and warm-started BCD-FW-WARM and BCD-IPM-WARM algorithms achieve a zero gap across all $50$ instances. The BCD-IPM yields a non-zero gap in only $4$ out of $50$ cases, with an average relative gap of $ 0.017 $.  These minimal approximation gaps stem from the topological nature of the network bottleneck. In the vast majority of congestion scenarios, the maximum delay is dominated by a specific bottleneck commodity that is routed along a single path. Under such single-path bottleneck conditions, the delay relaxation on each commodity perfectly tightens, rendering the approximate objective \eqref{obj_app} exactly equivalent to the original objective \eqref{eq:original_obj}. This confirms that the proposed upper-bound relaxation is not merely a theoretical convenience, but a highly accurate representation of the underlying routing bottlenecks. 

In summary, the preceding resilience analysis establishes the decisive superiority of the proposed BCD framework in achieving the optimal delay-energy trade-off. This joint optimization framework maximizes the energy-to-throughput translation efficiency (yielding up to an order of magnitude improvement); it guarantees bounded transmission times (less than $ 3.78\times $ relative to the baseline GG) while operating at the minimized energy expenditure (up to a $ 9.14\times $ improvement versus GG). Having validated the algorithms' extreme resilience under severe traffic congestion, we subsequently transition to scalability experiments.

\begin{figure*}[t]
	\centering
	
	\begin{subfigure}[b]{0.32\textwidth}
		\centering
		\includegraphics[width=\textwidth]{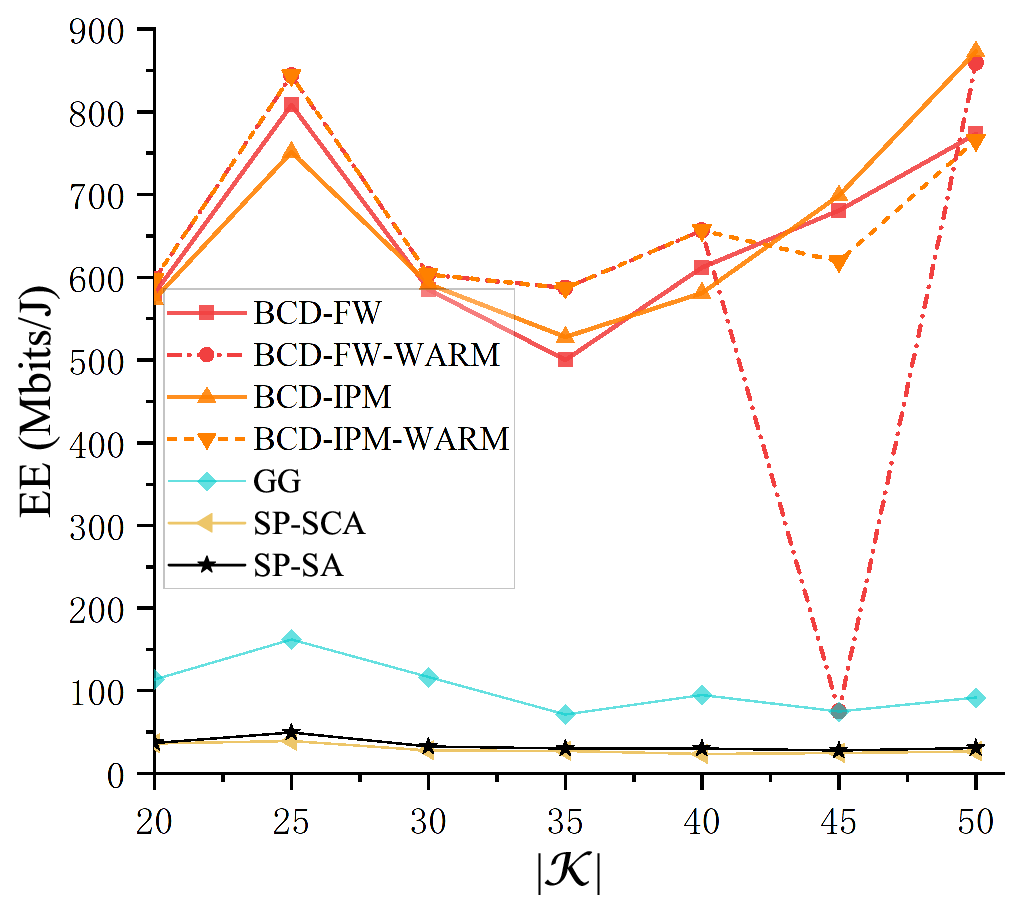}
		\caption{Energy efficiency vs. $ |\mathcal{K}| $}
		\label{fig:EE_vs_K}
	\end{subfigure}
	\hspace{0.0001\textwidth} 
	\begin{subfigure}[b]{0.32\textwidth}
		\centering
		\includegraphics[width=\textwidth]{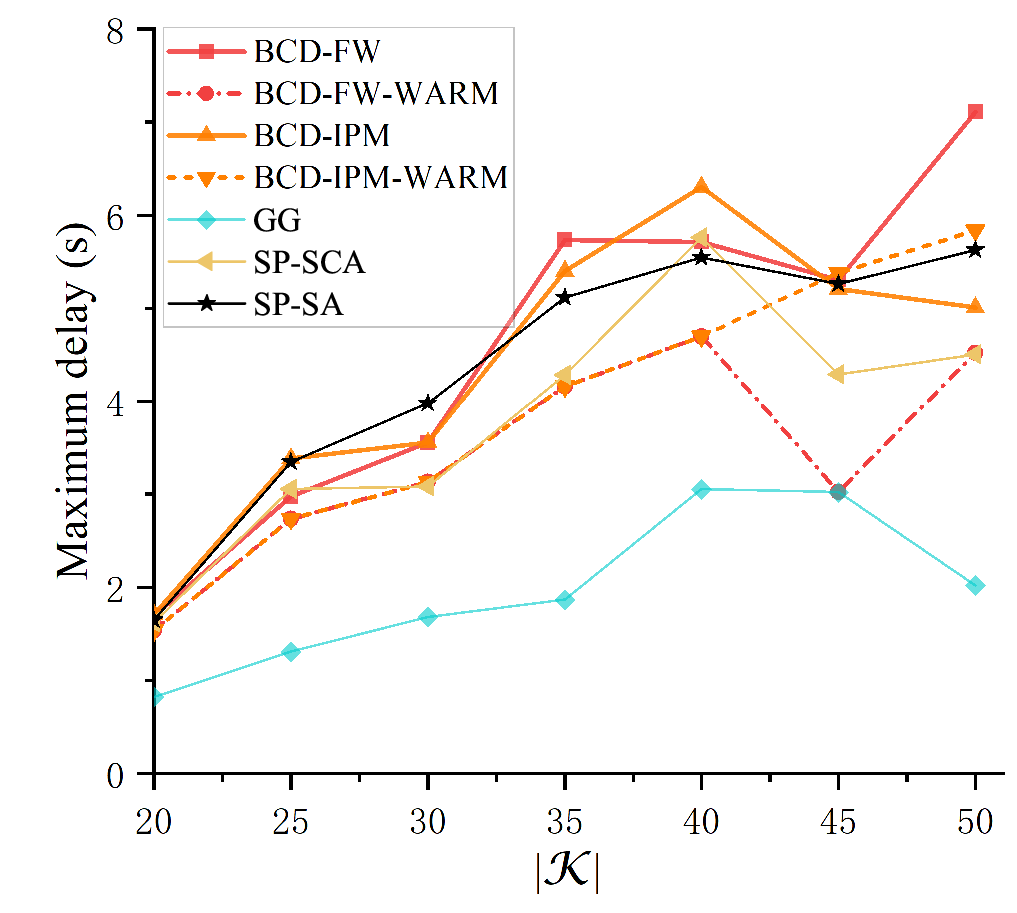}
		\caption{Maximum delay vs. $ |\mathcal{K}| $}
		\label{fig:Tmax_vs_K}
	\end{subfigure}
	\hspace{0.0001\textwidth} 
	\begin{subfigure}[b]{0.32\textwidth}
		\centering
		\includegraphics[width=\textwidth]{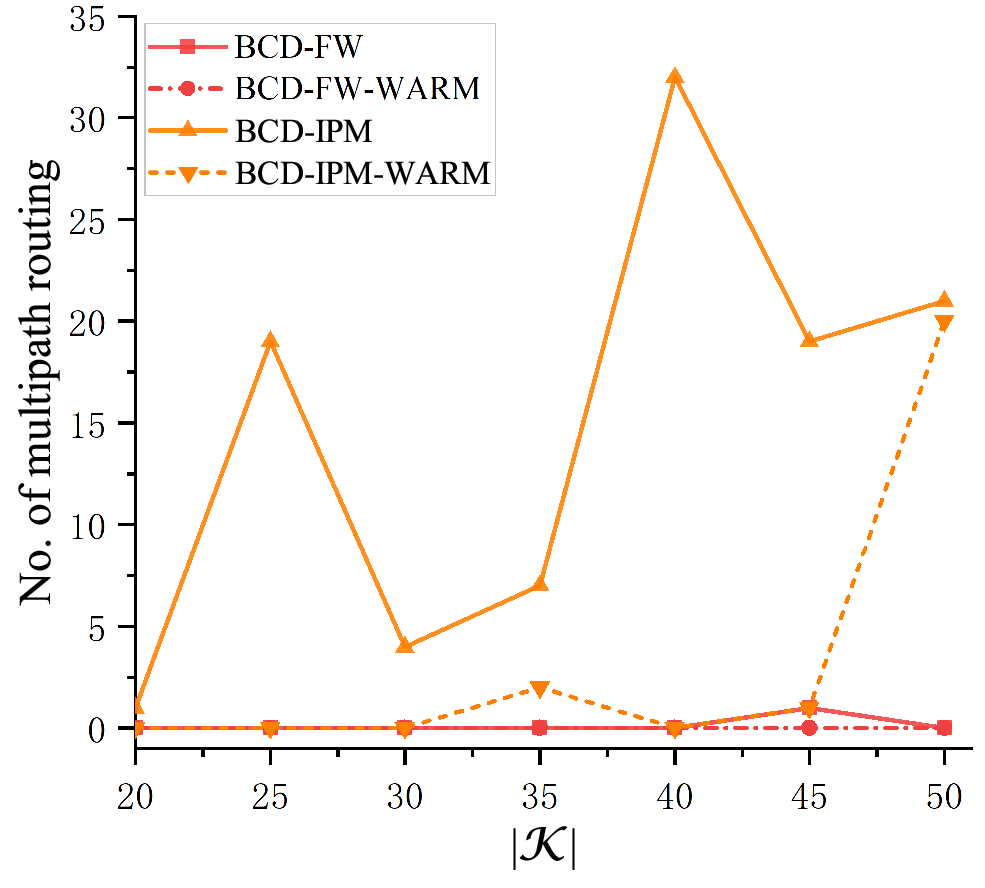}
		\caption{Multipath count vs. $ |\mathcal{K}| $}
		\label{fig:Multipaths_vs_K}
	\end{subfigure}
	
	\vspace{1.5em} 
	
	\begin{subfigure}[b]{0.32\textwidth}
		\centering
		\includegraphics[width=\textwidth]{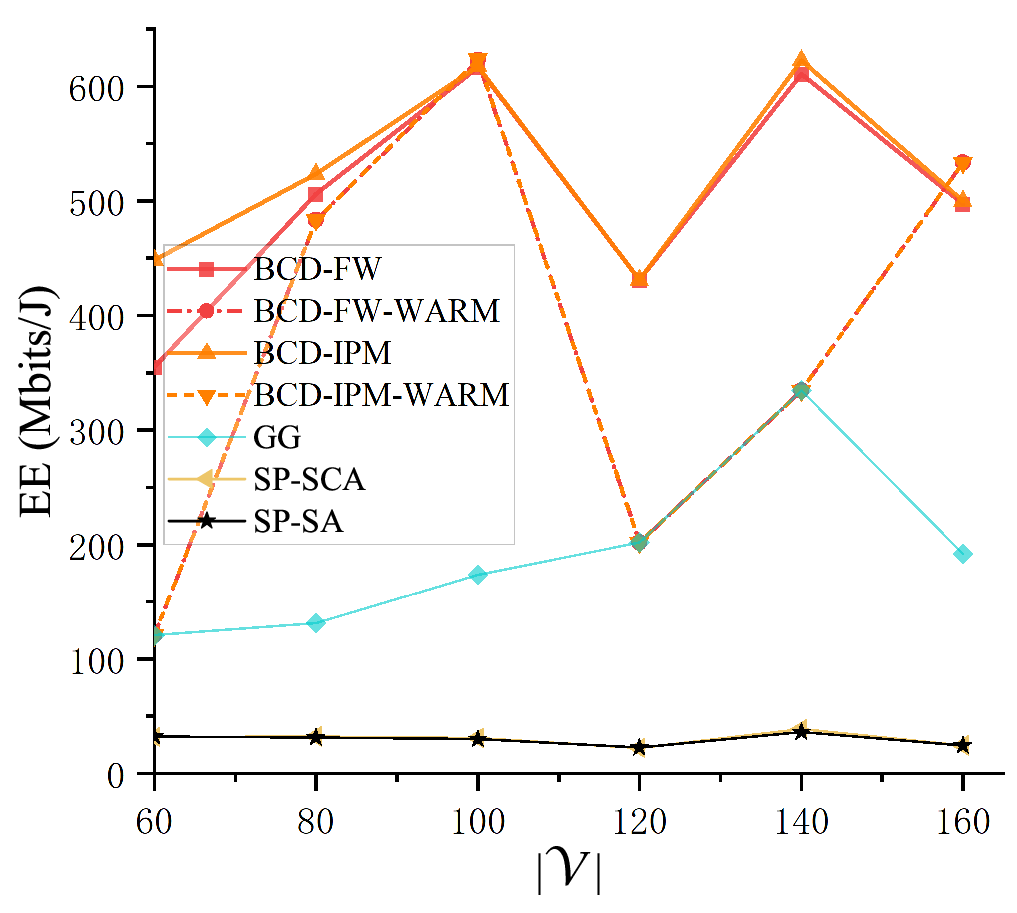}
		\caption{Energy efficiency vs. $|\mathcal{V}|$}
		\label{fig:EE_vs_N}
	\end{subfigure}
	\hspace{0.0001\textwidth} 
	\begin{subfigure}[b]{0.32\textwidth}
		\centering
		\includegraphics[width=\textwidth]{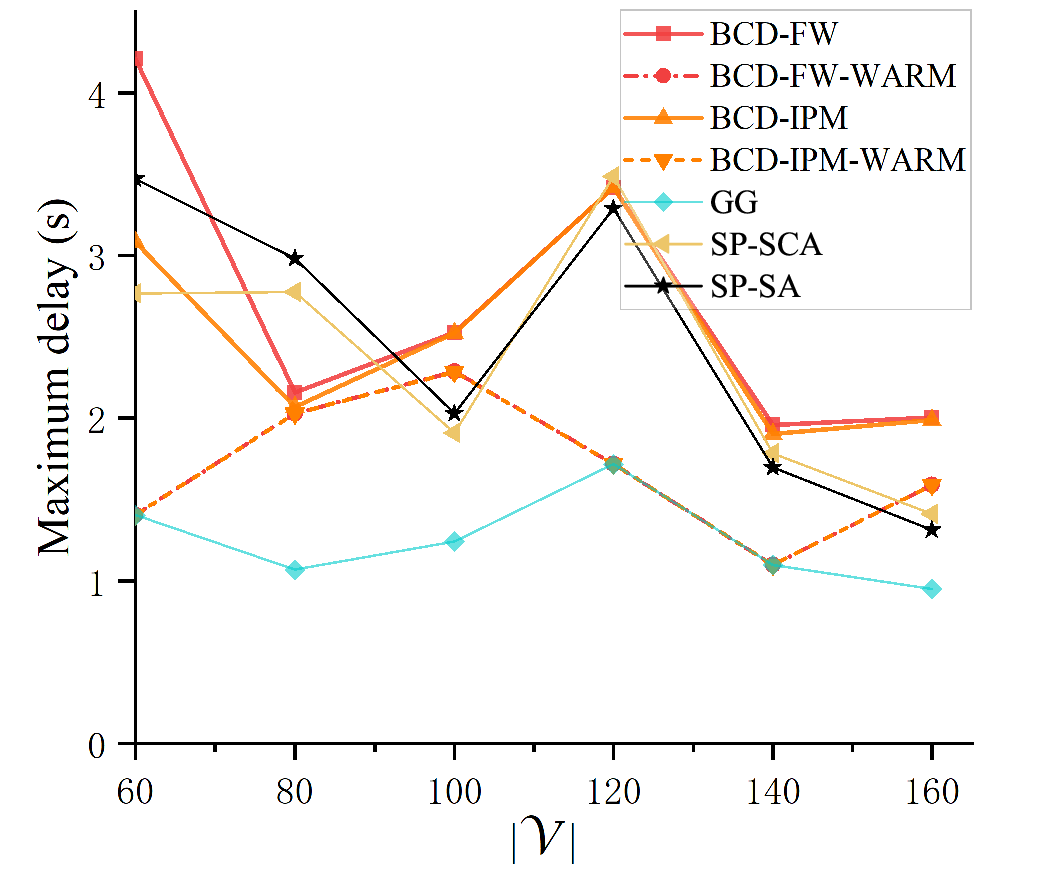}
		\caption{Maximum delay vs. $|\mathcal{V}|$}
		\label{fig:Tmax_vs_N}
	\end{subfigure}
	\hspace{0.0001\textwidth} 
	\begin{subfigure}[b]{0.32\textwidth}
		\centering
		\includegraphics[width=\textwidth]{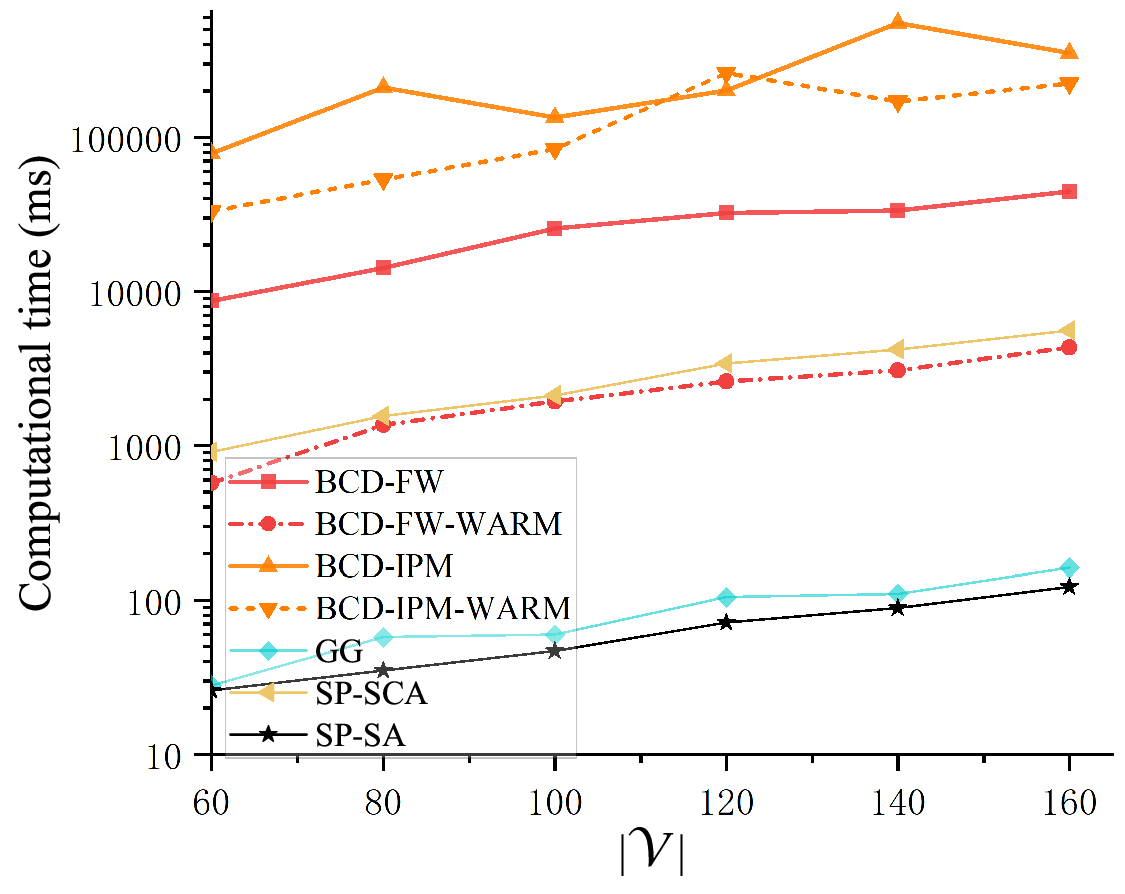}
		\caption{Computational time vs. $|\mathcal{V}|$}
		\label{fig:Time_vs_N}
	\end{subfigure}
	
	\caption{Scalability performance evaluation of the proposed BCD algorithms and baselines.} 
	\label{fig:scalability_results_all}
\end{figure*}

\subsection{Algorithmic scalability against network scale}
\label{subsec:scalability_evaluation}

We expand the network load by first varying the number of commodities $|\mathcal{K}| \in [20, 50]$ in increments of $5$, followed by scaling the device node cardinality $|\mathcal{V}| \in [60, 160]$ in increments of $20$. It is imperative to note that SP-PDA and KSP-PDA are omitted in subsequent analyses, as they exhibit maximum delays surging drastically above $89$ s (e.g., $89.92$ s for KSP-PDA at $|\mathcal{K}|=30$, and  $402.41$ s for SP-PDA at $|\mathcal{K}|=50$).
As illustrated in Fig. \ref{fig:EE_vs_K} and  \ref{fig:Tmax_vs_K}, BCD-IPM-WARM demonstrates profound scalability. It consistently dominates the EE metrics in most cases while occupying the second performance tier regarding maximum delay. Remarkably, even under the extreme network load of $|\mathcal{K}| = 50$, its maximum delay experiences only a controlled inflation to $5.84$ s, while preserving an exceptionally high EE of $765.08$. Consequently, compared to the GG baseline at this peak stress, BCD-IPM-WARM  confines its maximum delay gap to less than $2.89\times$, while achieving a massive $8.35\times$ improvement in energy efficiency. BCD-FW-WARM exhibits a similar performance, though it degenerates to the initial GG solution at $|\mathcal{K}| = 45$ (yielding identical EE and delay values of $74.80$ and $3.02$ s, respectively).

Furthermore, the BCD-FW and BCD-IPM algorithms exhibit the robust energy efficiency under high-density traffic, guaranteeing up to a $9.52\times$ EE improvement over GG ($|\mathcal{K}|=50$). They secure reasonable maximum delays, bounding the gap relative to the GG baseline within a worst-case factor of $3.53\times$ ($|\mathcal{K}|=50$).
Conversely, while the GG heuristic manages to maintain a lower maximum delay, it suffers from severely degraded energy efficiency. This exposes the drawback of GG: to compensate for severe single-path congestion bottlenecks, it aggressively consumes energy to brute-force data transmission, resulting in massive and unsustainable resource waste.

As illustrated in Fig. \ref{fig:Multipaths_vs_K}, evaluating the number of commodities employing multi-path routing across varying $|\mathcal{K}|$ unveils the underlying topological behavior of the proposed algorithms. (Note: single-path baselines consistently yield zero and are therefore omitted). The proposed BCD framework exhibits a highly adaptive multi-path activation strategy. Rather than blindly splitting flows under light traffic, this framework exclusively activates secondary paths when localized bottlenecks experience severe congestion and resource starvation. For instance, in response to escalating network loads, the BCD-IPM scales its multi-path routing from $1$ (at $|\mathcal{K}|=20$) to a peak of $32$ (at $|\mathcal{K}|=40$), before stabilizing at $21$ under extreme stress ($|\mathcal{K}|=50$). Conversely, BCD-FW maintains an exceptionally low multi-path activation quantity (peaking marginally at $1$ when $|\mathcal{K}|=45$). This algorithm prefers to explore single-path routing to alleviate the energy consumption instead of activating multi-path routing. This stems from the nature of the first-order FW mechanism, which relies on sequential shortest-path evaluations over the gradient direction.

The BCD-FW-WARM is constrained by initial single-path solution, yielding exactly $0$ across all instances.
However, BCD-IPM-WARM exhibits a superior capability to explore multi-path routing. Even when anchored by the GG initialization, it overcomes the local bottleneck, activating $2$ and $20$ multi-paths at $|\mathcal{K}|=35$ and $|\mathcal{K}|=50$, respectively. This stems from the exploitation of the exact second-order Hessian matrix coupled with the logarithmic barrier penalties within the LR-PDIPM (Algorithm \ref{algo:pd_ipm}).  These techniques capture the non-linear curvature of the congestion landscape, pushing the flow distribution away from the heavily congested solutions and deeper into the multi-path routing space. This unique characteristic guarantees the exceptional algorithmic stability previously validated in Fig. \ref{fig:EE_vs_K} and  \ref{fig:Tmax_vs_K}.

As illustrated in Fig. \ref{fig:EE_vs_N} and  \ref{fig:Tmax_vs_N}, a comprehensive evaluation of EE and maximum delay is conducted under a scaling $|\mathcal{V}| \in [60, 160]$. As $|\mathcal{V}|$ expands, the average node degree surges from $6.1$ at $|\mathcal{V}|=60$ to $15.8$ at $|\mathcal{V}|=160$. This generates a highly dense topological structure. For the EE metric, the BCD-FW and BCD-IPM algorithms consistently define the optimal performance tier, securing peak EE values up to $616.48$ and $622.46$, respectively. The warm-started BCD-FW-WARM and BCD-IPM-WARM closely follow in the second tier; however, they degenerate into the initial GG solutions at specific topologies ($|\mathcal{V}| = 60, 120,$ and $140$), yielding identical performance metrics. Conversely, the GG heuristic stagnates at an inferior EE tier, exhibiting a worst-case gap of $3.99\times$ compared to the BCD-IPM optimal bound (specifically at $|\mathcal{V}|=80$). Furthermore, SP-SCA and SP-SA yield the lowest performance, demonstrating a significant inefficiency in translating consumed energy into effective data transmission.

Concurrently, Fig. \ref{fig:Tmax_vs_N} illustrates the algorithmic performance in maximum delay. While the GG heuristic forces a lower bound in maximum delay by penalizing energy efficiency, the proposed  BCD-FW and BCD-IPM achieve comparable transmission times. They maintain a controlled gap of no more than $3.00\times$ relative to GG (peaking at $|\mathcal{V}|=60$ for BCD-FW). Finally, the BCD-FW-WARM and BCD-IPM-WARM deliver a highly competitive intermediate delay performance while preserving massive energy advantages over the baselines.

\begin{figure*}[htbp]
	\centering
	
	\begin{subfigure}[b]{0.32\textwidth}
		\centering
		\includegraphics[width=\textwidth]{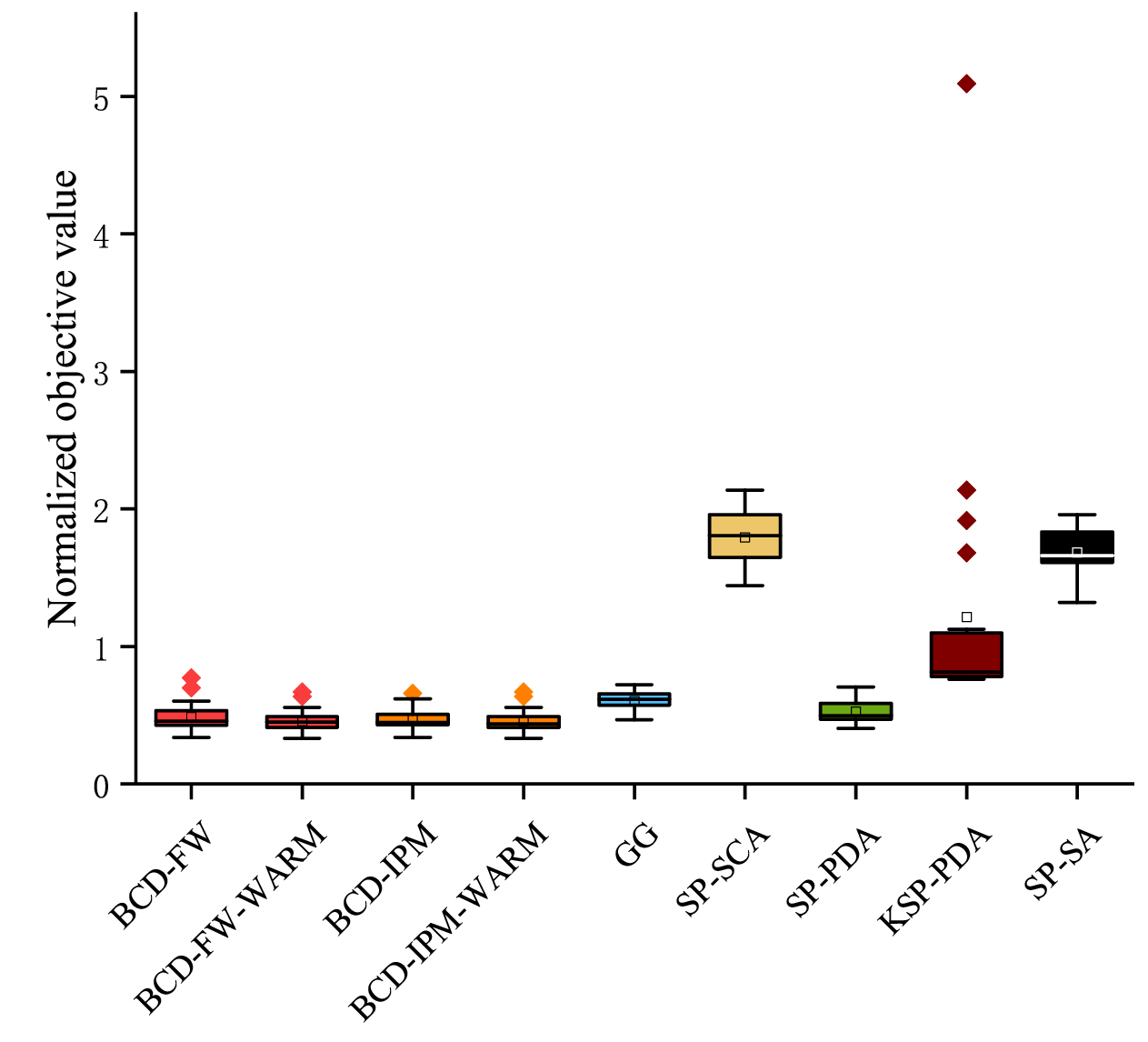}
		\caption{Normalized objective value distributions}
		\label{fig:Obj_box}
	\end{subfigure}
	\hspace{0.0001\textwidth} 
	\begin{subfigure}[b]{0.32\textwidth}
		\centering
		\includegraphics[width=\textwidth]{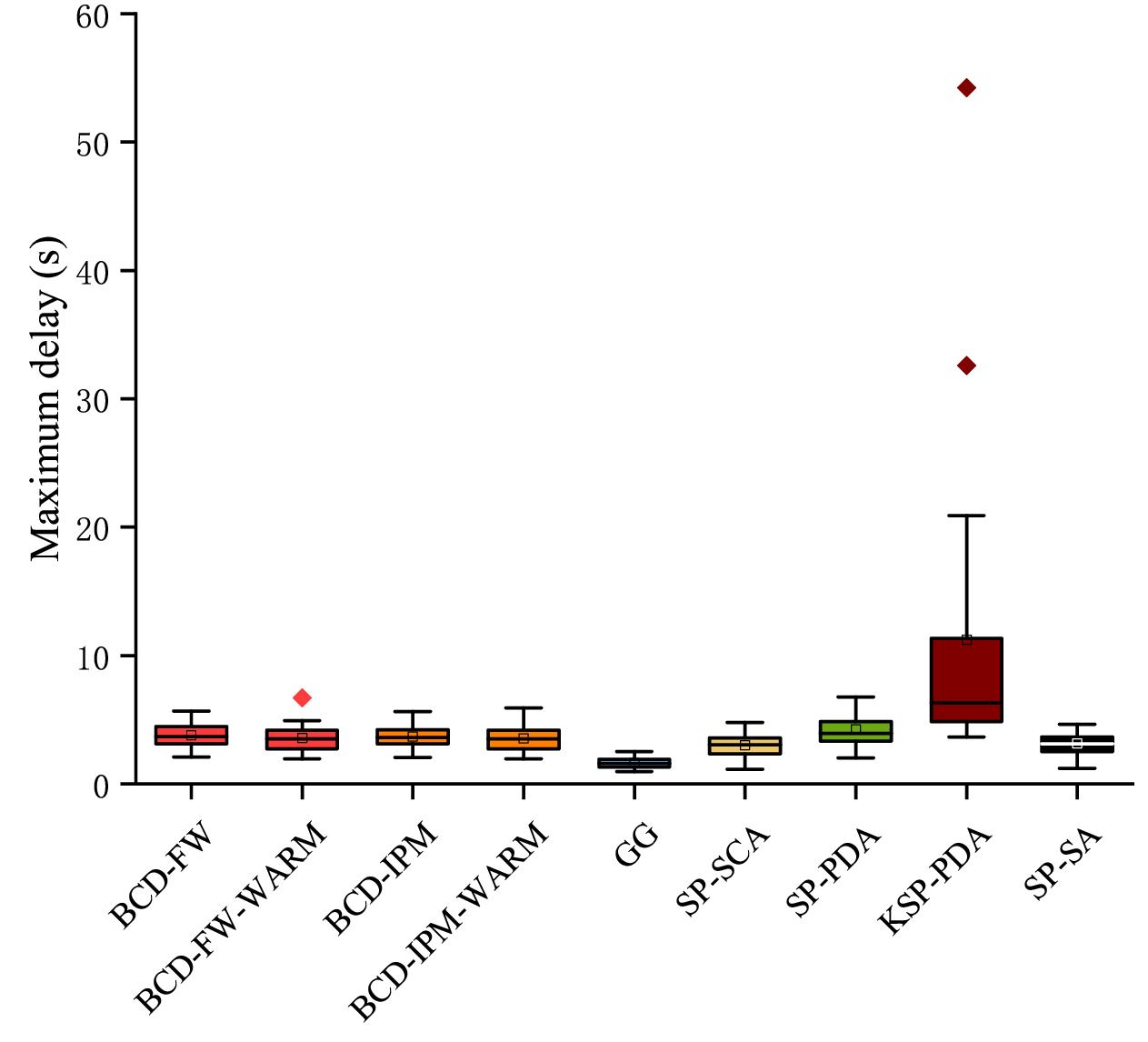}
		\caption{Maximum delay distributions}
		\label{fig:Tmax_box}
	\end{subfigure}
	\hspace{0.0001\textwidth} 
	\begin{subfigure}[b]{0.32\textwidth}
		\centering
		\includegraphics[width=\textwidth]{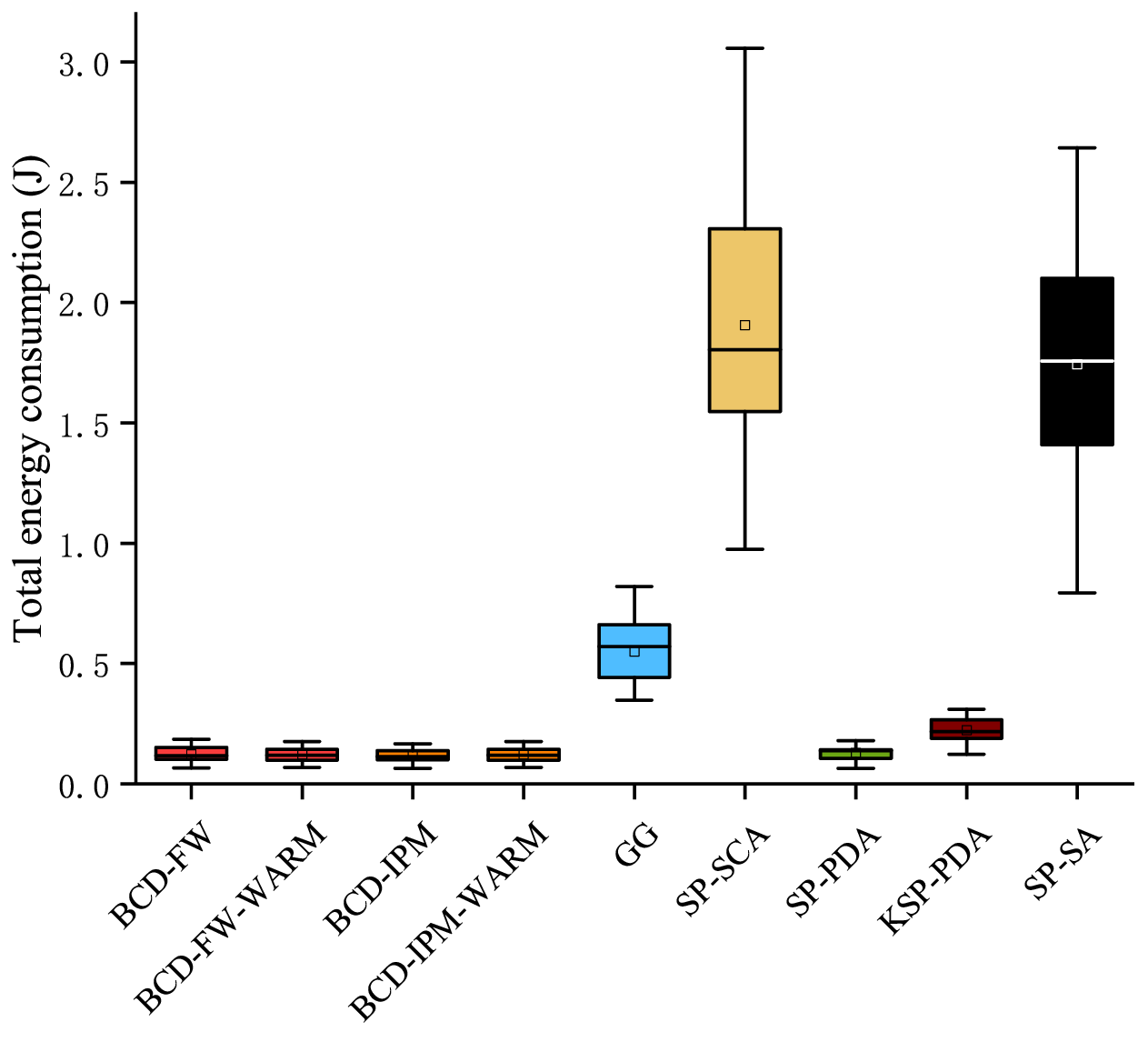}
		\caption{Total energy consumption distributions}
		\label{fig:Etotal_box}
	\end{subfigure}
	
	\caption{Algorithmic robustness evaluation across $20$ randomly generated network topologies under the same scenario where $|\mathcal{V}|=60$, $|\mathcal{K}|=20$, $\xi=5$ and $\alpha=0.4$.} 
	\label{fig:robustness_results_all}
\end{figure*}
As illustrated in Fig. \ref{fig:Time_vs_N}, an evaluation of computational scalability is conducted by recording the execution times (in milliseconds) across varying $|\mathcal{V}| \in [60, 160]$. As anticipated, a clear gap in computational times emerges between the proposed BCD algorithms and baselines. The BCD-IPM incurs the most substantial computational overhead, peaking at an extreme $551,196$ ms under dense scenarios ($|\mathcal{V}|=140$). This is driven by the demanding complexity of computing and inverting the exact second-order Hessian matrix within highly dense topologies. 

However, the proposed BCD-FW-WARM emerges as a profound computational breakthrough. By anchoring the BCD-FW with a high-quality initial solution, and exclusively relying on computationally lightweight first-order gradient information, BCD-FW-WARM achieves its peak execution time of merely $4328$ ms at ($|\mathcal{V}|=160$). Crucially, across the entire evaluated instances, this warm-started algorithm consistently executes faster than the baseline SP-SCA ($5588$ ms at $|\mathcal{V}|=160$). This algorithm even compresses the computational discrepancy with the GG heuristic to a single order of magnitude. This validates that BCD-FW-WARM achieves the ultimate synthesis of near-optimal solution and real-world deployment feasibility.
Finally, while the heuristic baselines (GG and SP-SA) execute in strictly under $162$ ms, this is achieved by abandoning exact iterative resource optimization and spatial multi-path routing. As demonstrated in the preceding resilience analyses, such compensations inevitably lead to severe resource waste, restricting their practical deployment.

In summary, the comprehensive scalability analysis establishes the profound robustness and superiority of the proposed BCD framework. As the number of commodities or the overall topological scale expands, the BCD-FW and BCD-IPM algorithms guarantee near-optimal solutions. They bound the maximum delay within a controlled gap (less than $ 3.53 \times $) relative to the baseline GG, while sustaining the  peak in energy efficiency (up to a $9.52\times$ improvement versus  GG). Among the proposed algorithms, BCD-FW-WARM emerges as a computational breakthrough. By leveraging computationally lightweight first-order gradient information under the high-quality initialization, it delivers a near-optimal solution within seconds.
Conversely, the GG baseline exposes the fatal flaw of heuristic designs. While GG offers computationally cheap execution and competitive maximum delay, it aggressively employs immense transmission power and bandwidth to force data through congested single-path bottlenecks. These results prove that the proposed BCD framework stands as a profoundly sound paradigm, capable of  ultra-reliable, highly scalable, and energy-efficient communications in dense overlay D2D networks.

\subsection{Algorithmic robustness against topological uncertainty}
\label{subsec:robustness_evaluation}

To evaluate the algorithmic robustness against topological uncertainty, we conduct comprehensive empirical evaluations across a diverse set of $20$ independently generated random topologies. The system settings are identical to the settings in Section \ref{subsec:stress_evaluation} with $\xi=5$. 
As illustrated in the boxplots of Fig. \ref{fig:Obj_box}, the distribution of the normalized objective values \eqref{normalized_obj} establishes the superiority and stability of the proposed BCD framework. 
The interquartile ranges (IQRs) of the four BCD algorithms exhibit profound compression compared to the baselines. This minimized statistical dispersion demonstrates that the proposed algorithms maintain optimal delay-energy trade-offs across random topologies. Furthermore, the upper whiskers and outlier data points reveal a critical performance boundary. 

While the BCD-FW algorithm experiences a localized objective inflation (peaking at $0.772$ in Topology 12), the other three BCD algorithms completely neutralize this vulnerability. Specifically, the extreme values of BCD-IPM and both warm-started algorithms are bounded below $0.668$, outperforming the worst-case value of $0.722$ yielded by the GG heuristic.
In contrast, heuristic baselines such as KSP-PDA exhibit severe volatility, with extreme outliers surging to $5.09$ (Topology $ 10 $) due to blind multi-path routing in random topologies. 
While the single-path GG and SP-PDA baselines maintain  compact IQRs comparable to the proposed BCD framework, their median objective values and overall distributional boundaries are elevated. This demonstrates their inferior capabilities in achieving the optimal delay-energy trade-off.

To further deconstruct the objective values, Fig. \ref{fig:Tmax_box} and  \ref{fig:Etotal_box} depict the distributions of its two components: maximum delay and total energy consumption, respectively.  The proposed BCD framework consistently secures highly compact IQRs across both objectives, with merely a single outlier recorded for BCD-FW-WARM in maximum delay (specifically $6.69$ s at Topology $20$). This profound consistency validates the superior robustness against topological uncertainty.
While the GG and SP-SCA baselines exhibit compact IQRs regarding maximum delay, they yield relatively elongated boxes and higher median values for total energy consumption. This corroborates the conclusions drawn in Sections \ref{subsec:stress_evaluation} and \ref{subsec:scalability_evaluation}: these heuristic baselines achieve competitive transmission times through an aggressive over-consumption of transmission power and bandwidth.

Furthermore, all algorithms integrating the optimal PDA algorithm (including the BCD framework, SP-PDA, and KSP-PDA) consistently exhibit remarkably compressed IQRs in total energy consumption (e.g., SP-PDA consistently operating below $0.18$ J). This validates that the PDA algorithm consistently bounds the energy waste, regardless of the underlying topological structures.
However, as evidenced by the severe delay fluctuations of KSP-PDA in Fig. \ref{fig:Tmax_box} (surging to $54.22$ s at Topology 10), simply applying optimal resource allocation atop heuristic multi-path routing fails to stabilize overall network performance.  This contrast validates the necessity of the proposed BCD framework, jointly optimizing the multi-path routing via MF-FW/LR-PDIPM and  resource allocation via PDA algorithm.


\section{Conclusion}
\label{sec:conclusion}
In this paper, we investigated the joint routing and resource allocation  problem in overlay D2D networks.  Because the network-layer traffic flows and physical-layer resources are intricately intertwined via the non-convex Shannon capacity, the resulting  formulation is computationally intractable. To resolve this, we proposed a novel BCD framework to iteratively optimize the routing and resource domains.
For the  routing subproblem, we first employed LSE smoothing to approximate the non-differentiable minimax delay objective. For the requirement of rapid execution, the MF-FW algorithm was developed to guide the descent direction and optimize step sizes based on the directional curvature. For scenarios demanding high-precision solutions, the LR-PDIPM was  developed. By exploiting the Sherman-Morrison rank-1 formula to accelerate the Newton system inversion, this method reduces the per-iteration complexity from $\mathcal{O}(|\mathcal{K}|^3 |\mathcal{V}|^3)$ to $\mathcal{O}(|\mathcal{K}| |\mathcal{V}|^3)$. 
For the physical-layer resource allocation subproblem, we exploited the inherent convexity by utilizing a time-domain perspective transformation. To bypass the scalability bottlenecks of standard commercial solvers, we designed a PDA algorithm. This approach updates Lagrangian multipliers via gradients and solves the  2D link-level subproblems using an inner bisection search. 

Theoretical analysis guarantees that the proposed BCD framework converges to an $\varepsilon$-neighborhood of a stationary point in polynomial time.
Through comprehensive empirical evaluations encompassing resilience, scalability and robustness, the proposed BCD framework establishes its superiority in achieving the optimal delay-energy trade-off. Specifically, BCD-IPM emerges as the most robust variant, guaranteeing the optimal or near-optimal trade-off even under extremely dense topologies. This algorithm achieves a maximum $9.14\times$ reduction in total energy consumption and up to an order of magnitude improvement in energy efficiency, while maintaining a bounded maximum delay gap (up to $3.78\times$) relative to the best baseline GG.  Meanwhile, the BCD-FW-WARM algorithm distinguishes itself as a highly practical engineering solution, synergizing heuristic initializations with lightweight first-order updates to deliver near-optimal solutions in mere seconds. 

\appendix

\section{Theoretical proofs for resource allocation}

\subsection{Proof of Lemma \ref{lemma:core_func}} \label{app:proof_lemma_core}
\begin{proof}
	The proof proceeds in two distinct parts, establishing the joint convexity of the generalized perspective function and the strict joint convexity of the specific bivariate composite function, respectively.
	
	\paragraph{Joint convexity of the perspective function $P(u,v)$}
	
	Given a convex function $g(\cdot)$ and the perspective mapping $P(u,v) = v g(u/v)$ for $v > 0$, we introduce the auxiliary variable $z = u/v$. Computing the second-order partial derivatives yields the Hessian matrix of $P(u, v)$:
	\begin{equation*}
		\nabla^2 P(u, v) = \frac{g''(z)}{v} 
		\begin{bmatrix} 
			1 & -z \\ 
			-z & z^2 
		\end{bmatrix}.
	\end{equation*}
	To establish joint convexity, we verify that the Hessian is positive semi-definite ($\nabla^2 P \succeq \mathbf{0}$). For any arbitrary non-zero vector $\mathbf{w} = [w_1, w_2]^{\mathsf{T}}$, the associated quadratic form evaluates to:
	\begin{equation*}
		\mathbf{w}^{\mathsf{T}} \nabla^2 P(u,v) \mathbf{w} = \frac{g''(z)}{v} (w_1^2 - 2z w_1 w_2 + z^2 w_2^2) = \frac{g''(z)}{v} (w_1 - z w_2)^2.
	\end{equation*}
	Since $g(\cdot)$ is  convex, its second derivative satisfies $g''(z) \ge 0$. Coupled with the domain definition $v > 0$ and the non-negative squared term $(w_1 - z w_2)^2 \ge 0$, the quadratic form is non-negative for all $\mathbf{w}$. Consequently, $\nabla^2 P \succeq \mathbf{0}$, proving that $P(u, v)$ is jointly convex over its domain.
	
	\paragraph{Strict and joint convexity of $f(u,v)$}
	
	We now analyze the bivariate function $f(u, v) = uv(2^{1/uv}-1)$ for positive variables $u, v > 0$. Let $x = uv$ and define the univariate core function $q(x) = x(2^{1/x}-1)$, such that $f(u,v) = q(uv)$. Applying the multivariate chain rule, the Hessian matrix of $f(u,v)$ is formulated as:
	\begin{equation*}
		\nabla^2 f(u,v) = 
		\begin{bmatrix}
			v^2 q''(x) & x q''(x) + q'(x) \\
			x q''(x) + q'(x) & u^2 q''(x)
		\end{bmatrix}.
	\end{equation*}
	The first and second derivatives of $q(x)$ are given by:
	\begin{equation} \label{eq:qx_deriv}
		q'(x) = 2^{1/x} - 1 - \frac{\ln 2}{x} 2^{1/x}, \quad q''(x) = \frac{(\ln 2)^2}{x^3} 2^{1/x}.
	\end{equation}
	To prove strict joint convexity, we must guarantee that $\nabla^2 f(u,v)$ is  positive definite ($\nabla^2 f \succ \mathbf{0}$), which holds if and only if both its trace and determinant are positive.
	
	{1) Trace analysis:} 
	Evaluating the trace of the Hessian yields:
	\begin{equation*}
		\text{Tr}(\nabla^2 f) = (u^2+v^2)q''(x).
	\end{equation*}
	Since $u, v > 0 \implies x > 0$, equation \eqref{eq:qx_deriv} guarantees $q''(x) > 0$. Thus, $\text{Tr}(\nabla^2 f) > 0$.
	
	{2) Determinant analysis:} 
	The determinant of the Hessian is computed as:
	\begin{align*}
		\det(\nabla^2 f) &= u^2 v^2 (q''(x))^2 - (x q''(x) + q'(x))^2 \nonumber \\
		&= x^2 (q''(x))^2 - \left( x^2 (q''(x))^2 + 2x q''(x)q'(x) + (q'(x))^2 \right) \nonumber \\
		&= -q'(x) \left[ 2x q''(x) + q'(x) \right].
	\end{align*}
	To determine the sign of this product, we define the positive auxiliary variable $\omega = {\ln 2}/{x} > 0$, mapping the exponential term to $2^{1/x} = \exp^\omega$. 
	
	First, we analyze the sign of $q'(x)$. Let $\phi(\omega) = \exp^\omega(1-\omega) - 1$. Its derivative is $\phi'(\omega) = -\omega \exp^\omega < 0$ for all $\omega > 0$. Given the initial condition $\phi(0) = 0$, it follows that $\phi(\omega) < 0$. Therefore, $q'(x) < 0$, which yields $-q'(x) > 0$.
	
	Second, we analyze the sign of the bracketed term $S(x) = 2x q''(x) + q'(x)$. Substituting $\omega$, we obtain the mapped function $\psi(\omega) = \exp^\omega(2\omega^2 - \omega + 1) - 1$. Its derivative evaluates to $\psi'(\omega) = \exp^\omega(2\omega^2 + 3\omega) > 0$ for $\omega > 0$. Given $\psi(0) = 0$, it follows that $\psi(\omega) > 0$, implying $S(x) > 0$.
	
	Since both decomposed factors are positive, their product guarantees $\det(\nabla^2 f) > 0$. Concurrently satisfying $\text{Tr}(\nabla^2 f) > 0$ and $\det(\nabla^2 f) > 0$, the Hessian matrix is positive definite. This completes the proof that $f(u,v)$ is strictly jointly convex.
\end{proof}

\subsection{Proof of Theorem \ref{thm:convexity}} \label{app:proof_thm_convexity}
\begin{proof}
	To establish \eqref{eq:convex_global} as a convex optimization problem, we verify the convexity of its objective and feasible region. 
	
	{1) Objective function:} The objective \eqref{con_obj_global} is a sum of a linear term $\alpha T$ and energy functions ${E}_{ij}(l_{ij}, t_{ij}) = \frac{N_0}{h_{ij}} l_{ij} t_{ij} (2^{m_{ij}/(l_{ij} t_{ij})} - 1)$. Via the affine mapping $u = l_{ij}$ and $v = t_{ij}/m_{ij}$, we obtain ${E}_{ij} \propto f(u,v)$. By Lemma \ref{lemma:core_func}, $f(u,v) = uv(2^{1/uv}-1)$ is strictly jointly convex for positive $(u,v)$. Since convexity is preserved under affine mapping and positive scaling, the objective is convex.
	
	{2) Feasible region:} Delay and bandwidth constraints \eqref{con_delay_global}\eqref{con_spec_global} define affine spaces. The power constraints \eqref{con_power_global} depend on $p_{ij}(l_{ij}, t_{ij}) = \frac{N_0}{h_{ij}} l_{ij} (2^{m_{ij}/(l_{ij} t_{ij})} - 1)$. Let $z(t_{ij}) = m_{ij}/t_{ij}$, which is convex for $t_{ij} > 0$. The perspective $P(z, l_{ij}) = l_{ij} (2^{z/l_{ij}} - 1)$ is jointly convex for $l_{ij} > 0$ and monotonically increasing in $z$ as $\frac{\partial P}{\partial z} > 0$. By the scalar composition theorem, substituting the convex $z(t_{ij})$ into $P(z, l_{ij})$ preserves joint convexity. Thus, the feasible region is an intersection of convex sets, completing the proof.
\end{proof}

\section{Theoretical proofs for MF-FW}
\subsection{Proof of Lemma \ref{lem:L_smooth}} \label{app:proof_lemma_L_smooth}
\begin{proof}
	The curvature of the smoothed objective $F(\mathbf{x})$ is governed by its LSE component. By the multivariate chain rule, the gradient is $\nabla F(\mathbf{x}) = \alpha \sum_{k \in \mathcal{K}} \beta_k \nabla T_k(\mathbf{x})$, where $\beta_k = \exp^{\mu T_k} / \sum_{j \in \mathcal{K}} \exp^{\mu T_j}$ satisfies $\beta_k > 0$ and $\sum_{k \in \mathcal{K}} \beta_k = 1$. The Hessian matrix is derived as:
	$$
	\nabla^2 F(\mathbf{x}) = \alpha \mu \left[ \sum_{k \in \mathcal{K}} \beta_k \nabla T_k \nabla T_k^{\mathsf{T}} - \left( \sum_{k \in \mathcal{K}} \beta_k \nabla T_k \right) \left( \sum_{k \in \mathcal{K}} \beta_k \nabla T_k \right)^{\mathsf{T}} \right].
	$$
	
	{1) Positive semi-definiteness:} For any arbitrary vector $\mathbf{v}$, the quadratic form is given by:
	\begin{equation}\label{pos_semidef}
		\mathbf{v}^{\mathsf{T}} \nabla^2 F(\mathbf{x}) \mathbf{v} = \alpha \mu \left[ \sum_{k \in \mathcal{K}} \beta_k (\nabla T_k^{\mathsf{T}} \mathbf{v})^2 - \left( \sum_{k \in \mathcal{K}} \beta_k \nabla T_k^{\mathsf{T}} \mathbf{v} \right)^2 \right].
	\end{equation}
	Since $\sum_{k \in \mathcal{K}} \beta_k = 1$, we can apply Jensen's inequality to the function $f(z) = z^2$, which yields $\left( \sum_{k \in \mathcal{K}} \beta_k z_k \right)^2 \le \sum_{k \in \mathcal{K}} \beta_k z_k^2$ by setting $z_k = \nabla T_k^{\mathsf{T}} \mathbf{v}$. Consequently, $\mathbf{v}^{\mathsf{T}} \nabla^2 F(\mathbf{x}) \mathbf{v} \ge 0$. Thus, $\nabla^2 F(\mathbf{x}) \succeq \mathbf{0}$, ensuring the convexity of $F(\mathbf{x})$.
	
	{2) Lipschitz continuity:} The $L$-smoothness constant corresponds to the spectral norm $\|\nabla^2 F(\mathbf{x})\|_2$. To bound this, we analyze the Rayleigh quotient for any unit vector $\mathbf{v}$ (i.e., $\|\mathbf{v}\|_2 = 1$). From \eqref{pos_semidef}, we obtain an upper bound:
	$$
	\mathbf{v}^{\mathsf{T}} \nabla^2 F(\mathbf{x}) \mathbf{v} \le \alpha \mu \sum_{k \in \mathcal{K}} \beta_k (\nabla T_k^{\mathsf{T}} \mathbf{v})^2.
	$$
	Applying the Cauchy-Schwarz inequality, $$(\nabla T_k^{\mathsf{T}} \mathbf{v})^2 \le \|\nabla T_k\|_2^2 \|\mathbf{v}\|_2^2 = \|\nabla T_k\|_2^2.$$ This yields:
	\begin{align*}
		\mathbf{v}^{\mathsf{T}} \nabla^2 F(\mathbf{x}) \mathbf{v}& \le \alpha \mu \sum_{k \in \mathcal{K}} \beta_k \|\nabla T_k\|_2^2    \\
		&\le \alpha \mu \max_{k \in \mathcal{K}} \|\nabla T_k\|_2^2 \sum_{k \in \mathcal{K}} \beta_k=\alpha \mu \max_{k \in \mathcal{K}} \|\nabla T_k\|_2^2.
	\end{align*}
	Since $T_k(\mathbf{x})$ is linear, $\nabla T_k$ is a constant vector of path delay coefficients. Therefore, the spectral norm is  bounded by:
	$$
	\|\nabla^2 F(\mathbf{x})\|_2 \le \alpha \mu \left( \max_{k \in \mathcal{K}} \sum_{(i,j) \in \mathcal{E}} \frac{M^k}{r_{ij}} \right)^2 = \alpha \mu C_{\max}^2.
	$$
	Setting $L = \alpha \mu C_{\max}^2$ completes the proof.
\end{proof}

\subsection{Proof of Theorem \ref{thm:fw_rate}} \label{app:proof_fw_rate}
\begin{proof}
	Let $h_n = F(\mathbf{x}^{(n)}) - F(\mathbf{x}^*)$ denote the primal optimality gap at iteration $n$, and let $C = L D^2$. 
	
	Because the objective function $F(\mathbf{x})$ is $L$-smooth (as established in Lemma \ref{lem:L_smooth}), the standard descent lemma bounds the objective value at the next iteration for any $\gamma \in [0,1]$:
	\begin{equation} \label{eq:descent_lemma}
		F(\mathbf{x}^{(n+1)}) \le F(\mathbf{x}^{(n)}) + \gamma \nabla F(\mathbf{x}^{(n)})^{\mathsf{T}} (\mathbf{y}^{(n)} - \mathbf{x}^{(n)}) + \frac{\gamma^2}{2} L \|\mathbf{y}^{(n)} - \mathbf{x}^{(n)}\|_2^2.
	\end{equation}
	Due to $D = \max_{\mathbf{x}, \mathbf{y} \in \mathcal{X}} \|\mathbf{x} - \mathbf{y}\|_2$, we have $\|\mathbf{y}^{(n)} - \mathbf{x}^{(n)}\|_2^2 \le D^2$. Furthermore, due to the convexity of $F(\mathbf{x})$ and the optimality of  $\mathbf{y}^{(n)}$ under current gradient $ \nabla F(\mathbf{x}^{(n)}) $, the gradient term bounds the negative primal gap: $\nabla F(\mathbf{x}^{(n)})^{\mathsf{T}} (\mathbf{y}^{(n)} - \mathbf{x}^{(n)}) \le F(\mathbf{x}^*) - F(\mathbf{x}^{(n)}) = -h_n$.
	
	Substituting these bounds into \eqref{eq:descent_lemma} yields:
	\begin{equation*} \label{eq:global_bound}
		F(\mathbf{x}^{(n)} + \gamma \mathbf{d}^{(n)}) \le F(\mathbf{x}^{(n)}) - \gamma h_n + \frac{\gamma^2}{2} C.
	\end{equation*}

	The truncated Newton step size $\lambda^*$ is the exact minimizer of the local quadratic surrogate defined by the directional curvature $C^{(n)}$. Because $F(\mathbf{x})$ is $L$-smooth over the compact domain $\mathcal{X}$, the local curvature is upper-bounded by the global curvature constant, i.e., $C^{(n)} \le C$. Thus, for $  \forall \gamma \in [0,1] $,
	$$F(\mathbf{x}^{(n)} + \lambda^* \mathbf{d}^{(n)}) \le F(\mathbf{x}^{(n)}) - \gamma h_n + \frac{\gamma^2}{2} C. $$ 
	Then, 
	\begin{equation} \label{eq:recurrence}
		h_{n+1} \le (1 - \gamma) h_n + \frac{\gamma^2}{2} C.
	\end{equation}
	This inequality guarantees a primal gap $h_{n+1}$ that is no greater than the gap produced by the standard  diminishing step size $\gamma^{(n)} = {2}/({n+2})$. Therefore, it is sufficient to prove the theoretical upper bound $h_n \le {2C}/({n+1})$ via mathematical induction using the recurrence \eqref{eq:recurrence} generated by $\gamma^{(n)}$.
	
	{Base case ($n=0$):}
	For the initial iteration, applying $\gamma^{(0)} = 1$ to  \eqref{eq:recurrence} gives $h_1 \le (1 - 1) h_0 + \frac{C}{2}(1)^2 = \frac{C}{2}$. The theorem necessitates $h_1 \le \frac{2C}{0+2} = C$. Since $\frac{C}{2} \le C$ strictly holds, the base case is verified.
	
	{Inductive step:}
	Assume the bound holds for iteration $n$, i.e., $h_n \le {2C}/({n+1})$. Substituting this inductive hypothesis and $\gamma^{(n)} = {2}/({n+2})$ into \eqref{eq:recurrence} for $h_{n+1}$ yields:
	\begin{align*}
		h_{n+1} &\le \left(1 - \frac{2}{n+2}\right) \frac{2C}{n+1} + \frac{C}{2} \left(\frac{2}{n+2}\right)^2 \\
		&= \frac{n}{n+2} \cdot \frac{2C}{n+1} + \frac{2C}{(n+2)^2} \\
		&= \frac{2C}{n+2} \left[ \frac{n}{n+1} + \frac{1}{n+2} \right].
	\end{align*}
	Analyzing the bracket term, we derive:
	\begin{equation*}
		\frac{n}{n+1} + \frac{1}{n+2} = \frac{n(n+2) + (n+1)}{(n+1)(n+2)} = \frac{n^2+3n+1}{n^2+3n+2} <1.
	\end{equation*}
	Therefore, 
	\begin{equation*}
		h_{n+1} \le \frac{2C}{n+2}.
	\end{equation*}
	This completes the mathematical induction, establishing the global $\mathcal{O}(1/n)$ sublinear convergence rate of MF-FW algorithm.
\end{proof}

\section{Theoretical proofs for LR-PDIPM}

\subsection{Proof of Lemma \ref{lemma:lipschitz_hessian}} \label{app:proof_lemma_hessian}
\begin{proof}
	By the Mean Value Theorem for matrix-valued functions, establishing the Lipschitz continuity of the Hessian matrix $\nabla^2 F(\mathbf{x})$ is  equivalent to  bounding the spectral norm of the third-order derivative $\nabla^3 F(\mathbf{x})$ over the feasible domain $\mathcal{X}$.
	
	Recall from the proof of Lemma \ref{lem:L_smooth} that the Hessian is written as:
	\begin{equation*}
		\nabla^2 F(\mathbf{x}) = \alpha \mu \left( \sum_{k \in \mathcal{K}} \beta_k \nabla T_k \nabla T_k^{\mathsf{T}} - \bar{\nabla T} \bar{\nabla T}^{\mathsf{T}} \right),
	\end{equation*}
	where $\bar{\nabla T} = \sum_{k \in \mathcal{K}} \beta_k \nabla T_k$.
	
	To obtain the third-order derivative, we compute the directional derivative of $\nabla^2 F(\mathbf{x})$ along an arbitrary unit vector $\mathbf{d} \in \mathbb{R}^{|\mathcal{E}||\mathcal{K}|}$ (with $\|\mathbf{d}\|_2 = 1$). By applying the chain rule, the derivative of $ \beta_k $ is given by $\nabla \beta_k = \mu \beta_k (\nabla T_k - \bar{\nabla T})$. Consequently, the third-order directional derivative is calculated as
	\begin{equation*} \label{eq:third_derivative}
		\nabla^3 F(\mathbf{x})[\mathbf{d}] = \alpha \mu^2 \sum_{k \in \mathcal{K}} \beta_k (\nabla T_k - \bar{\nabla T}) (\nabla T_k - \bar{\nabla T})^{\mathsf{T}} \left( (\nabla T_k - \bar{\nabla T})^{\mathsf{T}} \mathbf{d} \right).
	\end{equation*}
	Then,
	\begin{equation*}
		\left| \nabla^3 F(\mathbf{x})[\mathbf{d}, \mathbf{d}, \mathbf{d}] \right| = \alpha \mu^2 \left| \sum_{k \in \mathcal{K}} \beta_k \left( (\nabla T_k - \bar{\nabla T})^{\mathsf{T}} \mathbf{d} \right)^3 \right|.
	\end{equation*}
	Recall $\beta_k \in (0,1)$ and $\sum \beta_k = 1$. Applying the triangle inequality and the standard operator norm bound, we obtain:
	\begin{equation} \label{cubic_inequality}
		\sup_{\|\mathbf{d}\|_2 = 1} \left| \nabla^3 F(\mathbf{x})[\mathbf{d}, \mathbf{d}, \mathbf{d}] \right| \le 2 \alpha \mu^2 \max_{k \in \mathcal{K}} \|\nabla T_k\|_2^3.
	\end{equation}

	Given that  $T_k(\mathbf{x})$ is linear, its gradient vector $\nabla T_k \in \mathbb{R}^{|\mathcal{K}||\mathcal{E}|}$ is highly sparse and the non-zero entries are  the constant  $M^k / r_{ij}$. The $L_2$-norm of the gradient is bounded by the network scale, specifically $\|\nabla T_k\|_2 \le \sqrt{|\mathcal{E}|} R_{\max}$ and $R_{\max} = \max_{k, (i,j)} (M^k / r_{ij})$. Substituting this bound into \eqref{cubic_inequality} yields:
	\begin{equation*}
		\|\nabla^3 F(\mathbf{x})\|_2 \le 2 \alpha \mu^2 \left( \sqrt{|\mathcal{E}|} R_{\max} \right)^3.
	\end{equation*}
	
	Setting the Lipschitz constant to $L_H = 2 \alpha \mu^2 |\mathcal{E}|^{3/2} R_{\max}^3$, we obtain $L_H = \mathcal{O}(\alpha \mu^2 |\mathcal{E}|^{3/2} R_{\max}^3)$. This guarantees that the Hessian is Lipschitz continuous, revealing that the Newton-step curvature variation is governed jointly by the smoothing parameter and network scale.
\end{proof}

\subsection{Proof of Lemma \ref{lemma:step_size}} \label{app:proof_step_size}
\begin{proof}
	Let $\tilde{\mu} = \mu_{\text{gap}}^{(n)}$ denote the current duality gap. For any iterate within the wide neighborhood $\mathcal{N}_{-\infty}(\gamma)$, the variables satisfy the centrality condition $x_i s_i \ge \gamma \tilde{\mu}$. 
	
	{1)  Bounding the second-order terms.} \\
	From the Newton system of the PDIPM, the search directions satisfy:
	\begin{equation} \label{eq:newton_system}
		\mathbf{S} \Delta \mathbf{x} + \mathbf{X} \Delta \mathbf{s} = - \mathbf{X} \mathbf{s} + \sigma\tilde{\mu} \mathbf{1}.
	\end{equation}
	Because the linear equality constraints dictate $\mathbf{A} \Delta \mathbf{x} = \mathbf{0}$, the primal direction lies in the null space of $\mathbf{A}$. Correspondingly, the dual direction satisfies $\Delta \mathbf{s} = \nabla^2 F(\mathbf{x}) \Delta \mathbf{x} - \mathbf{A}^{\mathsf{T}} \Delta \mathbf{y}$. Therefore, the inner product yields $\Delta \mathbf{x}^{\mathsf{T}} \Delta \mathbf{s} = \Delta \mathbf{x}^{\mathsf{T}} \nabla^2 F(\mathbf{x}) \Delta \mathbf{x}$. Since the Hessian is positive semi-definite (as established in Lemma \ref{lem:L_smooth}), we have $\Delta \mathbf{x}^{\mathsf{T}} \Delta \mathbf{s} \ge 0$.
	
	By dividing both sides of \eqref{eq:newton_system} by $(\mathbf{X}\mathbf{S})^{1/2}$ and applying the triangle inequality along with the wide neighborhood lower bound $x_i s_i \ge \gamma \tilde{\mu}$, standard interior-point algebraic manipulations \cite{wright1997primal} ensure that the scaled Newton directions are bounded. Consequently, the Euclidean norm of the product $ \Delta \mathbf{X} \Delta \mathbf{s} $ is bounded by the duality gap:
	\begin{equation*}
		\|\Delta \mathbf{X} \Delta \mathbf{s}\|_2^2 = \sum_{i} (\Delta x_i \Delta s_i)^2 \le \mathcal{C}\tilde{\mu}^2,
	\end{equation*}
	where $\mathcal{C} > 0$ is a constant dependent on $\gamma$ and the network scale, independent of the iteration count.
	
	{2)  Guaranteeing a positive step size.} \\
	To maintain the iterates within $\mathcal{N}_{-\infty}(\gamma)$ at the next iteration, the condition $ \mathbf{x}(\lambda) \circ \mathbf{s}(\lambda)=(\mathbf{x} + \lambda \Delta \mathbf{x}) \circ (\mathbf{s} + \lambda \Delta \mathbf{s}) \ge \gamma\tilde{\mu}(\lambda) \mathbf{1}$ must hold. 
	Because the Hessian is $L_H$-Lipschitz continuous (Lemma \ref{lemma:lipschitz_hessian}), the deviation of the nonlinear KKT residuals \eqref{eq:kkt_residuals} from the linear Newton prediction is bounded by $\mathcal{O}(\lambda^2 L_H \|\Delta \mathbf{x}\|_2^2)$. The bounded cross terms and the finite Lipschitz constant $L_H$  guarantee that the neighborhood violation grows quadratically with $\lambda$. Thus, the backtracking line search will accept a step size $\lambda$ before it shrinks to zero, proving the existence of a lower bound $\lambda \ge \lambda_{\min} > 0$.
	
	{3) Monotonic gap reduction.} \\
	The duality gap at the candidate step $\lambda$ is calculated as:
	\begin{equation} \label{eq:gap_eval}
		\tilde{\mu}(\lambda) = \frac{(\mathbf{x} + \lambda \Delta \mathbf{x})^{\mathsf{T}} (\mathbf{s} + \lambda \Delta \mathbf{s})}{|\mathcal{K}||\mathcal{E}|} = \frac{\mathbf{x}^{\mathsf{T}} \mathbf{s} + \lambda (\mathbf{x}^{\mathsf{T}} \Delta \mathbf{s} + \mathbf{s}^{\mathsf{T}} \Delta \mathbf{x}) + \lambda^2 \Delta \mathbf{x}^{\mathsf{T}} \Delta \mathbf{s}}{|\mathcal{K}||\mathcal{E}|}.
	\end{equation}
	Multiplying \eqref{eq:newton_system} by $\mathbf{1}^{\mathsf{T}}$, we obtain the sum of the cross terms: $\mathbf{x}^{\mathsf{T}} \Delta \mathbf{s} + \mathbf{s}^{\mathsf{T}} \Delta \mathbf{x} = -\mathbf{x}^{\mathsf{T}} \mathbf{s} + \sigma\tilde{\mu} |\mathcal{K}||\mathcal{E}| = -|\mathcal{K}||\mathcal{E}|\mu(1-\sigma)$. Substituting this into \eqref{eq:gap_eval} yields:
	\begin{equation*}
		\tilde{\mu}(\lambda) =\tilde{\mu} \big( 1 - \lambda(1-\sigma) \big) + \frac{\lambda^2}{|\mathcal{K}||\mathcal{E}|} \Delta \mathbf{x}^{\mathsf{T}} \Delta \mathbf{s}.
	\end{equation*}
	The backtracking line search controls the second-order term $\lambda^2 \Delta \mathbf{x}^{\mathsf{T}} \Delta \mathbf{s}$. Enforcing the search criteria  yields the upper bound for the updated gap:
	\begin{equation*}
		\mu_{\text{gap}}^{(n+1)} \le \big( 1 - \lambda_{\min}(1-\sigma_{\max}) \big) \mu_{\text{gap}}^{(n)}.
	\end{equation*}
	This confirms the monotonic reduction of the optimality gap, concluding the proof.
\end{proof}

\subsection{Proof of Theorem \ref{thm:ipm_complexity}} \label{app:proof_thm_complexity}
\begin{proof}
	The time complexity of the LR-PDIPM is determined by the dimension of the inequality constraints and the reduction rate of the duality gap. We establish the proof in two sequential steps.
	
	{1) Gap reduction rate.} \\
	For the smoothed routing subproblem \eqref{eq:smoothed_routing}, the inequality bounds originate from the non-negativity constraints $\mathbf{x} \ge \mathbf{0}$. Since $\mathbf{x}$ encompasses the flow of each commodity $k \in \mathcal{K}$ over each edge $(i,j) \in \mathcal{E}$, the total number of inequality constraints is  $|\mathcal{K}||\mathcal{E}|$. 
	
	By initializing the IPM at $(\mathbf{x}^{(0)}, \mathbf{s}^{(0)}) > \mathbf{0}$ within the wide neighborhood $\mathcal{N}_{-\infty}(\gamma)$, Lemma \ref{lemma:step_size} guarantees that the algorithm generates a sequence of iterates that are strictly feasible with respect to the inequality bounds. Furthermore, it establishes a  monotonic reduction of the duality gap:
	\begin{equation} \label{eq:gap_geometric}
		\mu^{(n+1)} \le \left( 1 - \delta \right) \mu^{(n)},
	\end{equation}
	where the single-iteration reduction factor is bounded by $\delta = \lambda_{\min}(1-\sigma_{\max})$. According to the standard path-following interior-point theory \cite{wright1997primal}, to maintain the iterates within the neighborhood and ensure the quadratic boundedness of the cross terms, the allowable step size $\lambda_{\min}$ scales inversely with the square root of the inequality dimension, yielding $\delta = \Omega(1/\sqrt{|\mathcal{K}||\mathcal{E}|})$.
	
	{2) Iteration complexity bound.} \\
	Applying the recursive relation \eqref{eq:gap_geometric} over $n$ iterations yields:
	\begin{equation*}
		\mu^{(n)} \le (1 - \delta)^n \mu^{(0)}.
	\end{equation*}
	To achieve an $\varepsilon$-optimal solution, the algorithm must satisfy $\mu^{(n)} \le \varepsilon$. Taking the natural logarithm of both sides gives:
	\begin{equation*}
		n \ln(1 - \delta) \le \ln\left( \frac{\varepsilon}{\mu^{(0)}} \right).
	\end{equation*}
	Using the standard logarithmic inequality $\ln(1 - \delta) \le -\delta$ for $\delta \in (0, 1)$, we obtain the required number of iterations:
	\begin{equation*}
		n \ge \frac{1}{\delta} \ln\left( \frac{\mu^{(0)}}{\varepsilon} \right).
	\end{equation*}
	Since $\delta^{-1} = \mathcal{O}(\sqrt{|\mathcal{K}||\mathcal{E}|})$, and the initial gap $\mu^{(0)}$ is a given constant, the maximum number of iterations is  bounded by:
	\begin{equation*}
		n = \mathcal{O}\left(\sqrt{|\mathcal{K}||\mathcal{E}|} \ln\left(\frac{1}{\varepsilon}\right)\right).
	\end{equation*}
	Because the smoothed objective $ F(\mathbf{x}) $ is convex over the feasible domain $\mathcal{X}$, the satisfaction of the $\varepsilon$-KKT residual conditions guarantees that the limit of the iterate sequence is a global $\varepsilon$-optimal solution, concluding the proof.
\end{proof}

\section{Theoretical proofs for PDA}
\subsection{Proof of Lemma \ref{lemma:cd_conv}} \label{app:proof_cd_conv}
\begin{proof}
	To establish the convergence of the alternating coordinate descent for the link-level subproblem, we analyze the structural properties of $\Phi_{ij}$ defined in \eqref{eq:lagrangian_func}.
	
	{1) Strict convexity and differentiability.} \\
	The subproblem objective is a positive linear combination: $\Phi_{ij}(l_{ij}, t_{ij}) = (1-\alpha){E}_{ij} + \varphi_i p_{ij} + \omega l_{ij} + (\sum_{k \in \mathcal{K}} \theta^k \rho_{ij}^k) t_{ij}$. By Lemma \ref{lemma:core_func} and Theorem \ref{thm:convexity}, the composite energy function ${E}_{ij}$ is strictly jointly convex, and the transmission power function $p_{ij}$ is jointly convex with respect to $(l_{ij}, t_{ij})$. Given that $\alpha \in (0,1)$ and the dual multipliers are non-negative, the strict convexity is  preserved in their sum. Thus, $\Phi_{ij}$ is strictly convex and continuously differentiable. The feasible domain $\mathcal{F}_{ij} = [l_{\min}, B] \times [t_{\min}, T_{\max}^{\text{QoS}}]$ is a compact Cartesian product domain, ensuring the constraints on $l_{ij}$ and $t_{ij}$ are completely decoupled.
	
	{2) Monotonic descent.} \\
	Let $\mathbf{z}^{(q)} = (l_{ij}^{(q)}, t_{ij}^{(q)})$ denote the iterate at the $q$-th inner iteration. The exact 1D bisection updates yield:
	\begin{equation*}
		\Phi_{ij}(l_{ij}^{(q)}, t_{ij}^{(q)}) \le \Phi_{ij}(l_{ij}^{(q)}, t_{ij}^{(q-1)}) \le \Phi_{ij}(l_{ij}^{(q-1)}, t_{ij}^{(q-1)}).
	\end{equation*}
	Due to the strict convexity of $\Phi_{ij}$, the sequence of objective values $\{\Phi_{ij}(\mathbf{z}^{(q)})\}$ is  monotonically decreasing until the minimum is reached. Bounded below over the compact set $\mathcal{F}_{ij}$, the sequence is guaranteed to converge to a limit point.
	
	{3) Uniqueness of the limit point.} \\
	For continuously differentiable functions optimized over a Cartesian product domain, any limit point of the coordinate descent sequence is guaranteed to be a stationary point. Because $\Phi_{ij}$ is strictly convex, this point is the global minimum. Therefore, the sequence converges to the unique minimum, concluding the proof.
\end{proof}

\subsection{Proof of Lemma \ref{lemma:bounded_g}} \label{app:proof_bounded_g}
\begin{proof}
	To prove the uniform boundedness of the dual gradient $\|\mathbf{g}^{(n)}\|_2$, we formulate its normalized components based on the dual update rules \eqref{eq:dual_update_resources} and bound them individually.
	
	{1) Bounding normalized linear residuals.} \\
	According to the bandwidth update \eqref{eq:update_omega} and the routing delay update \eqref{eq:update_theta}, the corresponding gradient components are formulated as:
	\begin{align}
		g_\omega^{(n)} &= \frac{\sum_{(i,j)\in \mathcal{E}_{act}} l_{ij}^{(n)} - B}{B}, \label{eq:g_omega} \\
		g_{\theta^k}^{(n)} &= \sum_{(i,j)\in \mathcal{E}_{act}} \rho_{ij}^k t_{ij}^{(n)}, \quad \forall k \in \mathcal{K}. \label{eq:g_theta}
	\end{align}
	During the primal update, the allocated bandwidth variables $l_{ij}^{(n)}$ are restricted within $[l_{\min}, B]$. Thus, the maximal theoretical sum is bounded by $|\mathcal{E}_{act}|B$, yielding a strict bound on the relative bandwidth violation: $|g_\omega^{(n)}| \le |\mathcal{E}_{act}|$. Similarly, because the time variables are bounded by the QoS delay tolerance $t_{ij}^{(n)} \le T_{\max}^{\text{QoS}}$, and the routing fractions $\rho_{ij}^k \in [0,1]$, the delay gradient is bounded by a finite constant: $|g_{\theta^k}^{(n)}| \le |\mathcal{E}_{act}| T_{\max}^{\text{QoS}}$.
	
	{2) Bounding the normalized power residual.} \\
	According to the node power update \eqref{eq:update_phi}, the normalized power gradient for node $i$ is formulated as:
	\begin{equation}\label{eq:g_phi}
		g_{\varphi_i}^{(n)} = \frac{\sum_{j \in \mathcal{V}_{out}(i)} p_{ij}(l_{ij}^{(n)}, t_{ij}^{(n)}) - P_i^{\max}}{P_i^{\max}}. 
	\end{equation}
	During the primal update, the resource variables $(l_{ij}^{(n)}, t_{ij}^{(n)})$ are restricted within the compact set $\mathcal{F}_{ij} = [l_{\min}, B] \times [t_{\min}, T_{\max}^{\text{QoS}}]$. Because the transmission power $p_{ij}(l, t)$ is a continuous function over these bounded variables, its output cannot diverge to infinity. Consequently, the maximal  power consumed by any single link is bounded by a finite constant, defined as $P_{\text{bound}} = \max_{(l,t) \in \mathcal{F}_{ij}} p_{ij}(l,t)$. Thus, the maximal theoretical power sum for node $i$ is bounded by $|\mathcal{V}_{out}(i)| P_{\text{bound}}$, yielding a bound on the relative power violation:
	\begin{equation*}
		|g_{\varphi_i}^{(n)}| \le \frac{|\mathcal{V}_{out}(i)| P_{\text{bound}}}{P_i^{\max}} + 1.
	\end{equation*}
	
	{3) Global gradient bound.} \\
	Equations \eqref{eq:g_omega}-\eqref{eq:g_phi} demonstrate that every scalar element of the normalized gradient vector $\mathbf{g}^{(n)}$ is bounded by constants dictated by the network topology ($|\mathcal{V}_{out}(i)|, |\mathcal{E}_{act}|$) and the finite domain bounds ($P_{\text{bound}} / P_i^{\max}$). Therefore, their $L_2$-norm is uniformly bounded, verifying the existence of a finite constant $G > 0$ such that $\|\mathbf{g}^{(n)}\|_2 \le G$.
\end{proof}

\bibliographystyle{elsarticle-num} 
\bibliography{reference}

\end{document}